\newcommand{\e}{\varepsilon}
\newcommand{\IR}{\mathbb R}
\newcommand{\dens}{\mathrm{dens}}
\newcommand{\w}{\omega}
\newcommand{\PMetr}{\mathbf{PMetr}}
\newcommand{\Dist}{\mathbf{PMetr}}
\newcommand{\Metr}{\mathbf{Metr}}
\newcommand{\Top}{\mathbf{Top}}
\newcommand{\Dec}{\mathsf{D}}
\newcommand{\seq}{\mathsf{s}}
\newcommand{\Set}{\mathbf{Set}}
\newcommand{\cl}{\mathrm{cl}}
\newcommand{\Eco}{\mathsf{E}}
\newcommand{\Cob}{\circledast}%\mathsf{Cob}}
\newcommand{\IN}{\mathbb N}
\newcommand{\IZ}{\mathbb Z}
\newcommand{\IA}{\mathrm{II}}
\newcommand{\pr}{\mathrm{pr}}
\newcommand{\N}{\ensuremath{\mathbb{N}}}
\newcommand{\Ra}{\Rightarrow}
\newcommand{\la}{\langle}
\newcommand{\ra}{\rangle}
\newcommand{\id}{\mathrm{id}}
\newtheorem{theorem}{Theorem}[section]
\newtheorem{lemma}[theorem]{Lemma}
\newtheorem{corollary}[theorem]{Corollary}
\newtheorem{proposition}[theorem]{Proposition}
\newtheorem{problem}[theorem]{Problem}
\theoremstyle{definition}
\newtheorem{remark}[theorem]{Remark}
\newtheorem{example}[theorem]{Example}
\newtheorem{definition}[theorem]{Definition}
\title{Connected economically metrizable spaces}
\author{T.~Banakh, M.~Vovk, M.~R.~W\'ojcik}
\address{Taras Banakh: Department of Mathematics, Ivan Franko National University of Lviv, Ukraine, and\newline
Instytut Matematyki, Unwersytet Humanistyczno-Przyrodniczy Jana Kochanowskiego, Kielce, Poland}
\email{tbanakh@yahoo.com}
\address{M.~Vovk: National University ``Lvivska Politechnika", Lviv, Ukraine}
\address{Micha{\l} Ryszard W\'ojcik:\newline Department of Mathematics, University of Louisville, Louisville, USA}
\email{michal.ryszard.wojcik@gmail.com}
\subjclass[2000]{54B30;  54C30; 54D05; 54E35; 54E50; 54F15; 54G15; 54G20}
\keywords{Nonseparably connected complete metric space, economical metric space, quotient map, monotone map}
\begin{document}

\maketitle

\begin{abstract}
A topological space is {\em nonseparably connected} if it is connected
but all of its connected separable subspaces are singletons. 
We show that each connected sequential topological space $X$ is the image of a nonseparably connected complete metric space
$\Eco X$ under a monotone quotient map. The metric $d_{\Eco X}$ of the space $\Eco X$ is 
{\em economical} in the sense that
for each infinite subspace $A\subset X$
the cardinality of the set $\{d_{\Eco X}(a,b):a,b\in A\}$
does not exceed the density of $A$,
$|d_{\Eco X}(A\times A)|\le\dens(A)$. 

The construction of the space $\Eco X$ determines a functor $\Eco:\Top\to\Metr$ from the category $\Top$ of topological spaces and their continuous maps into the category $\Metr$ of metric spaces and their non-expanding maps.
\end{abstract}

%\begin{abstract}
%A topological space is {\em nonseparably connected} if it is connected
%but all of its connected separable subspaces are singletons. 
%We show that each connected first countable space is the image of a nonseparably connected complete metric space
%under a continuous monotone hereditarily quotient map.
%\end{abstract}

%\setcounter{tocdepth}{1}
%\tableofcontents

\section{Introduction}

This paper was motivated by the problem of constructing a nonseparably connected complete metric space, posed in \cite{MWp} and \cite{MW}. A topological space $X$ is called {\em separably connected} if any two points of $X$ lie in a connected separable subspace.
On the other hand, a topological space $X$ is {\em nonseparably connected} if it is connected but all connected separable subspaces of $X$ are singletons. 

The first example of a nonseparably connected metric space was constructed by R.Pol
in 1975, \cite{Pol}. Another example was given by P.Simon \cite{Sim} in 2001.  
%and Aron, Maestre \cite{AM} in 2003.  
In 2008, Morayne and W\'ojcik obtained a nonseparably
connected metric group as a graph of an additive function from the real line to a non-separable Banach space, see \cite{WPhD} or \cite{MW}. None of these nonseparably connected spaces is completely metrizable.

In this paper we shall suggest a general (functorial) construction of nonseparably connected complete metric spaces. 

Our approach is based on the notion of an economical metric space. This is a metric space $(X,d)$ such that for each infinite subspace $A\subset X$ the set $d(A\times A)=\{d(a,b):a,b\in A\}$ has cardinality $|d(A\times A)|\le\dens(A)$ where $\dens(A)=\min\{|D|:D\subset A\subset\overline{D}\}$ stands for the density of $A$.

It is easy to see that each separable subspace of an economic metric space is zero-dimensional and hence each connected economically metrizable space is nonseparably connected. The following theorem, which is the main result of the paper, yields us many examples of connected economical complete metric spaces, thus resolving Problem 2 of \cite{MW}. This theorem is proved in Section~\ref{ecorez}, see Theorem~\ref{t9.1}.

\begin{theorem}\label{t1.1} Each (connected) sequential topological space $X$ is the image of a (connected) economical complete metric space $\Eco X$ under a monotone quotient map $\xi_X:\Eco X\to X$. 
\end{theorem}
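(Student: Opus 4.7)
My plan is to construct $\Eco X$ as a metric space in which each point of $X$ has a connected nonseparable preimage under $\xi_X$, with the overall metric economical.

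First I would build a ``universal fiber'' $F$: a nonseparably connected complete economical metric space with a distinguished basepoint, obtained by a Pol--Simon-style transfinite construction modified to be complete, and with the metric taking values in a small, combinatorially defined set so that economy is built in from the start.

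Next I would assemble $\Eco X$ by placing over each point $x \in X$ a copy $F_x$ of $F$, declared to be the fiber $\xi_X^{-1}(x)$, and gluing these fibers together using the convergent sequences of $X$. For every sequence $(x_n)_{n \in \w}$ converging to $x_\infty$ in $X$, I would prescribe that the basepoints of $F_{x_n}$ converge in $\Eco X$ to the basepoint of $F_{x_\infty}$. The metric on $\Eco X$ combines the fiber metrics with a sparse ``bridge'' metric derived from these sequences, defined so that all interfiber distances take values in one fixed countable set.

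With the construction in place, the four conclusions of Theorem~\ref{t1.1} then follow. Quotient-ness of $\xi_X$ uses sequentiality of $X$ together with the explicit lift of convergent sequences built in the previous step. Monotonicity is immediate since each fiber is a connected copy of $F$. Completeness follows because any Cauchy sequence in $\Eco X$ either stabilizes in one fiber (and converges by completeness of $F$) or projects to a Cauchy sequence in $X$ whose limit determines a unique limit upstairs. Economy follows from economy of $F$ together with sparsity of the bridge distances. Finally, if $X$ is connected then so is $\Eco X$: any disconnection of $\Eco X$ would be $\xi_X$-saturated, because fibers are connected, and would therefore descend via the quotient property to a disconnection of $X$. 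Functoriality is obtained by tracking how a continuous $f\colon X\to Y$ induces a fiberwise non-expanding map $\Eco f\colon \Eco X\to \Eco Y$.

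The main obstacle is preserving economy under the gluing: each pair of fibers glued via a bridge threatens to introduce fresh distance values, and there are as many bridges as convergent sequences in $X$. Arranging the bridge metric via a tree-like or ultrametric-like structure whose distance spectrum is globally bounded, while still providing enough bridges to make $\xi_X$ a quotient map, is the delicate balance at the technical heart of the proof; naturality in morphisms of $\Top$ imposes an additional rigidity that further constrains the construction.
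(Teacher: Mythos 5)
Your plan diverges completely from the paper's (which builds $\Eco X=\Cob^\w\Dec X$ as a composite of two functors: a ``sequential decomposition'' $\Dec X=C(S_0,X)\times S_0$ that replaces each point of $X$ by the set of all convergent sequences passing through it, followed by an inverse limit of iterated cobweb graphs), but the divergence is not merely stylistic: your gluing prescription is self-contradictory. You require one distinguished basepoint $b_x$ in each fiber $F_x$ and demand that $b_{x_n}\to b_{x_\infty}$ in $\Eco X$ whenever $x_n\to x_\infty$ in $X$. Then $s\colon x\mapsto b_x$ is sequentially continuous, hence continuous on the sequential space $X$, and $\xi_X\circ s=\id$ makes $s$ a topological embedding of $X$ into the metric space $\Eco X$. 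For a non-metrizable sequential $X$ this is already impossible; worse, for $X=[0,1]$ it plants a nondegenerate separable connected subspace inside $\Eco X$, while Proposition~\ref{eco-p1} shows an economical metric space has \emph{no} such subspace (every subspace of density $<\mathfrak c$ is zero-dimensional). So no choice of ``bridge metric'' can rescue this: the obstruction is to the existence of any continuous section, not to its metric details. The paper's way around exactly this obstruction is to refuse a canonical basepoint per fiber --- the fiber over $x$ contains a separate lift $(f,0)$ for \emph{each} convergent sequence $f$ with $f(0)=x$, and a sequence $x_n\to x$ lifts only along its own copy $(f,\tfrac1n)\to(f,0)$; that multiplicity is what lets $\xi_X$ be quotient without $X$ embedding in $\Eco X$.

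There is a second, independent gap: your ``universal fiber'' $F$ --- a nonseparably connected \emph{complete} economical metric space --- is precisely the object whose existence is the open problem the theorem resolves. The paper stresses that none of the known examples (Pol, Simon, Morayne--W\'ojcik) is completely metrizable, and the entire cobweb/inverse-limit machinery exists to manufacture completeness and economy simultaneously; ``a Pol--Simon-style transfinite construction modified to be complete'' is not an available step and cannot be cited as a black box. Relatedly, your completeness argument (``projects to a Cauchy sequence in $X$'') presupposes a metric on $X$, which is not given. If you want to keep a fiber-bundle picture, the honest version of it is the paper's: the fiber $\xi_X^{-1}(x)$ is itself produced by the same iterated construction, connectedness of fibers comes from the arcwise connected spiders $S_x$, and economy comes from the quantitative bound $|\pi_X(A)|\le\dens(A)$ for the compression maps combined with the $\tfrac1n$-rescaling in the inverse limit.
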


As we shall see, the construction of the space $\Eco X$ determines a functor $$\Eco:\Top\to\Metr$$ from the category $\Top$ of topological spaces and their continuous maps to the category $\Metr$ of metric spaces and their non-expanding maps.
The functor $\Eco$ will be defined as the composition $\Eco=\Cob^\w\circ \Dec$ of the functor of sequence decomposition $\Dec:\Top\to\PMetr$ and the functor of infinite iterated cobweb $\Cob^\w:\PMetr\to\Metr$. Here $\PMetr$ is the category of premetric spaces and their non-expanding maps. A {\em premetric space} is a pair $(X,d)$ consisting of a set $X$ and a function $d:X\times X\to[0,\infty)$ such that $d(x,x)=0$ for all $x\in X$. Premetric spaces will be considered in Section~\ref{premetric}, the functors $\Dec$ and $\Cob^\w$ are defined in Sections~\ref{dec} and \ref{cobweb}, respectively.

\section{Economical metric spaces}\label{economic}

We recall that the metric $d$ of a metric space $(X,d)$ is {\em economical} if for each infinite subset $A\subset X$ the set $d(A\times A)=\{d(a,b):a,b\in A\}$ has cardinality $|d(A\times A)|\le\dens(A)$.

Typical examples of economical metric spaces are ultrametric spaces. We recall that a metric $d$ on a set $X$ is called an {\em ultrametric} if it satisfies the strong triangle inequality
$$d(x,z)\le\max\{d(x,y),d(y,z)\}\mbox{ for all $x,y,z\in X$}.
$$

\begin{proposition}\label{ultra} Each ultrametric space is economical.
\end{proposition}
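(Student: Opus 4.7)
The plan is to fix an ultrametric space $(X,d)$ and an infinite subset $A\subset X$, set $\kappa=\dens(A)$, and show $|d(A\times A)|\le\kappa$. I first want to observe that $\kappa$ is necessarily infinite: if some $D\subset A$ with $A\subset\overline{D}$ were finite, then $D$ would be closed (in $X$, and a fortiori in $A$), forcing $A=D$ to be finite, contrary to assumption. So I may pick a dense subset $D\subset A$ with $|D|=\kappa\ge\w$; then the cardinality of $d(D\times D)$ is at most $|D\times D|=\kappa$.

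The heart of the argument is to show the inclusion
\[
d(A\times A)\subset d(D\times D).
\]
Given $a,b\in A$, I may assume $a\ne b$, and set $r=d(a,b)>0$. Because $D$ is dense in $A$, I can choose $a',b'\in D$ satisfying $d(a,a')<r$ and $d(b,b')<r$. Now I invoke the classical isosceles triangle principle for ultrametrics: whenever two sides of a triangle have different lengths, the third side equals the larger of the two. Applying it twice, first to the triple $(a',a,b)$ and then to $(a',b,b')$, I obtain $d(a',b)=r$ and then $d(a',b')=r$. Hence $r\in d(D\times D)$, which proves the inclusion and yields $|d(A\times A)|\le|d(D\times D)|\le\kappa$.

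There is no real obstacle here. The only subtlety worth flagging is the preliminary remark that $\dens(A)$ is infinite for an infinite metric subspace $A$, since otherwise the bound $|d(A\times A)|\le\dens(A)$ would be trivially false for, say, a three-point set. Everything else is a direct application of the strong triangle inequality together with the density of $D$ in $A$.
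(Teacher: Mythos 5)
Your proof is correct, and it takes a genuinely different route from the paper's. The paper argues by contradiction: assuming $|d(A\times A)|>\dens(A)$, it picks one pair $(x_t,y_t)$ realizing each nonzero distance $t$, observes that the resulting subset of $A\times A$ is too large to be discrete, and then uses the strong triangle inequality on two nearby pairs $(x_t,y_t)$, $(x_s,y_s)$ to force $s=t$, a contradiction. You instead prove the direct inclusion $d(A\times A)\subset d(D\times D)$ for any dense $D\subset A$, via the isosceles triangle principle applied twice to replace $a$ and $b$ by nearby points of $D$ without changing the distance; this immediately gives $|d(A\times A)|\le|D|^2=\dens(A)$. Your argument is the cleaner of the two: it avoids the choice of representatives and the non-discreteness step (which implicitly uses that density is monotone and multiplicative on metric spaces), and it establishes the slightly stronger fact that the distance set of $A$ is already realized on any dense subset. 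Your preliminary observation that $\dens(A)$ is infinite for infinite metric $A$ is also needed and correctly justified. The one point worth stating explicitly is the isosceles principle itself (if $d(x,y)\ne d(y,z)$ then $d(x,z)=\max\{d(x,y),d(y,z)\}$), which follows in two lines from the strong triangle inequality; the paper effectively reproves this inline rather than citing it.
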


\begin{proof} We should check that $|d(A\times A)|\le\dens(A)$ for any infinite subset $A$ of an ultrametric space $(X,d)$. Assuming that $|d(A\times A)|>\dens(A)$, we conclude that the set $D=d(A\times A)\setminus\{0\}$ has cardinality $|D|>\dens(A)$. For every $t\in D$ select a pair of points $x_t,y_t\in A$ with $d(x_t,y_t)=t$.
Since the subspace $B=\{(x_y,y_t)\}_{t\in D}\subset A\times A$  has cardinality $|B|=|D|>\dens(A)=\dens(A\times A)\ge\dens(B)$, it is not discrete and hence has a non-isolated point $(x_t,y_t)\in B$. Then we can find $s\in D\setminus\{t\}$ such that the point $(x_s,y_s)$ is so near to $(x_t,y_t)$ that 
$$\max\{d(x_s,x_t),d(y_s,y_t)\}<\tfrac13d(x_t,t_t)=\tfrac t3.$$
The triangle inequality for $d$ guarantees that
$$s=d(x_s,y_s)\ge d(x_t,y_t)-d(x_t,x_s)-d(y_t,y_s)>\tfrac13d(x_t,y_t)=\tfrac t3.$$
On the other hand, the strong triangle inequality implies that
$$s=d(x_s,y_s)\le\max\{d(x_s,x_t),d(x_t,y_t),d(y_y,y_s)\}=d(x_t,y_t)=t.$$
By the same reason,
$$t=d(x_t,y_t)\le\max\{d(x_t,x_s),d(x_s,y_s),d(y_s,y_t)\}=d(x_s,y_s)=s.$$
Unifying those inequalities, we get $t=s$, which contradicts the choice of $s$.
\end{proof}

Proposition~\ref{ultra} implies that the Cantor cube $2^\w=\{0,1\}^\w$ endowed with the ultrametric 
$$d((x_n),(y_n))=\max_{n\in\w}|x_n-y_n|/2^n$$is an economical metric space. Yet, the Cantor cube $2^\w$ is homeomorphic to the Cantor set $C\subset\IR$ which, being endowed with the Euclidean metric $d(x,y)=|x-y|$, is not economical. This justifies the following definition.

A topological space $X$ is defined to be {\em economically metrizable} if the topology of $X$ is generated by an economical metric. 

\begin{proposition}\label{eco-p1} If $X$ is an economically metrizable space, then each subspace $A\subset X$ of density $\dens(A)<\mathfrak c$ is zero-dimensional. Consequently, each connected economically metrizable space is nonseparably connected.
\end{proposition}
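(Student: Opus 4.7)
The plan is to exploit the fact that, for $A$ infinite with $\dens(A)<\mathfrak c$, the economical condition forces the set of distances $D=d(A\times A)$ to be a small subset of $[0,\infty)$, leaving an abundance of radii $r$ whose open and closed balls in $A$ coincide.

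First I would dispatch the trivial case: if $A$ is finite, then as a subspace of a metric space it is discrete and hence zero-dimensional, so nothing to prove. Assume therefore that $A$ is infinite. By economicality, $|D|=|d(A\times A)|\le\dens(A)<\mathfrak c$. Since $[0,\infty)$ has cardinality $\mathfrak c$, for every $\varepsilon>0$ the set $(0,\varepsilon)\setminus D$ is nonempty.

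For any $x\in A$ and any $\varepsilon>0$, pick $r\in(0,\varepsilon)\setminus D$. Then no point $y\in A$ satisfies $d(x,y)=r$, so
\[
\{y\in A:d(x,y)<r\}=\{y\in A:d(x,y)\le r\}.
\]
The left-hand set is open in $A$ and the right-hand set is closed in $A$, so this common set is a clopen neighborhood of $x$ in $A$ of diameter at most $2\varepsilon$. Letting $\varepsilon\to 0$ we get a clopen neighborhood basis at every point of $A$, which by the standard definition means $A$ is zero-dimensional.

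For the ``consequently'' clause, let $X$ be connected and economically metrizable and let $S\subset X$ be a connected separable subspace. Then $\dens(S)\le\aleph_0<\mathfrak c$, so by what was just proved $S$ is zero-dimensional. A zero-dimensional $T_1$-space with more than one point is disconnected (any two distinct points can be separated by a clopen set), so $S$ must be a singleton. Thus $X$ is nonseparably connected. There is no serious obstacle here; the only nuance is the passage from the cardinality inequality $|D|<\mathfrak c$ to the existence of a ``gap'' radius arbitrarily close to $0$, which uses only that $|[0,\varepsilon)|=\mathfrak c$.
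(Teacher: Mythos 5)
Your proof is correct and follows essentially the same route as the paper: both arguments use $|d(A\times A)|<\mathfrak c$ to find radii $r$ arbitrarily close to $0$ that are not realized as distances within $A$, so that the corresponding balls are clopen in $A$, giving a clopen base. Your explicit treatment of the finite case and of the ``consequently'' clause (a zero-dimensional $T_1$-space with more than one point is disconnected) just fills in details the paper leaves implicit.
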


\begin{proof} Let $d$ be an economical metric generating the topology of $X$. Given any subspace $A\subset X$ of density $\dens(A)<\mathfrak c$, we conclude that  $|d(A\times A)|\le\dens(A)<\mathfrak c$ and hence the set $R=[0,\infty)\setminus d(A\times A)$ is dense in $(0,\infty)$. 
Since each ball $B_A(a,r)$, $a\in A$, $r\in R$, is open-and-closed in $A$, the space $A$ has a base of the topology consisting of closed-and-open set, which means that $A$ is zero-dimensional.
\end{proof} 

\begin{problem} Let $X$ be an economical metric space and $A\subset X$ be a subspace of density $\dens(A)<\mathfrak c$. Is $A$ strongly zero-dimensional?
\end{problem}

We recall that a metric space $X$ is {\em strongly zero-dimensional} if for any disjoint closed subsets $A,B\subset X$ there is an open-and-closed subset $U\subset X$ such that $A\subset U\subset X\setminus B$, see \cite[6.2.4]{En}.

We do not know if Proposition~\ref{eco-p1} can be reversed.

\begin{problem} Is a metrizable topological space $X$ economically metrizable if each subspace $A\subset X$ of density $\dens(A)<\mathfrak c$ is (strongly) zero-dimensional?
\end{problem} 

This problem can be posed more generally as:

\begin{problem} Characterize topological spaces whose topology is generated by an economical metric.
\end{problem}

We can also ask about the characterization of metrizable spaces $X$ such that any metric generating the topology of $X$ is economical. This question has the following answer:

\begin{theorem} For a metrizable topological space $X$ the following conditions are equivalent:
\begin{enumerate}
\item each metric generating the topology of $X$ is economical;
\item $|f(A)|\le\dens(A)$ for any subspace $A\subset X$ and a continuous map $f:X\to\IR$;
\item $\min\{|A|,\mathfrak c\}\le\dens(A)$ for any subspace $A\subset X$.
\end{enumerate}
\end{theorem}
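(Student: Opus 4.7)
The plan is to establish the cycle $(3)\Rightarrow(1)\Rightarrow(2)\Rightarrow(3)$, with the direct implication $(3)\Rightarrow(2)$ serving as a sanity check. The first three implications are short computations; the closing direction $(2)\Rightarrow(3)$ contains the real content.

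For $(3)\Rightarrow(2)$, any continuous $f:X\to\IR$ satisfies $|f(A)|\le\min\{|A|,|\IR|\}=\min\{|A|,\mathfrak c\}$, which is $\le\dens(A)$ by (3). For $(3)\Rightarrow(1)$, fix a metric $d$ generating the topology of $X$ and an infinite $A\subseteq X$ with $\kappa=\dens(A)$: if $\kappa\ge\mathfrak c$ then $|d(A\times A)|\le\mathfrak c\le\kappa$; otherwise (3) forces $|A|\le\kappa$ and $|d(A\times A)|\le|A|^2=\kappa$.

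For $(1)\Rightarrow(2)$ the key move is to replace $d$ by the equivalent metric $d'(x,y)=d(x,y)+|f(x)-f(y)|$. This generates the same topology as $d$ (since $d\le d'$, while continuity of $f$ gives the reverse direction of convergence). Applying (1) to both $d$ and $d'$ gives $|d(A\times A)|,|d'(A\times A)|\le\kappa=\dens(A)$. Fixing $a_0\in A$, the map $b\mapsto(d(a_0,b),d'(a_0,b))$ takes at most $\kappa^2=\kappa$ values on $A$, and its two coordinates differ by exactly $|f(a_0)-f(b)|$; hence $|\{|f(a_0)-f(b)|:b\in A\}|\le\kappa$, and since each such $t$ corresponds to at most two candidates $f(a_0)\pm t$ for $f(b)$, one concludes $|f(A)|\le 2\kappa=\kappa$.

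The final implication $(2)\Rightarrow(3)$ I argue contrapositively: from $A\subseteq X$ with $\kappa=\dens(A)<\min\{|A|,\mathfrak c\}$ I construct a continuous $f:X\to\IR$ with $|f(A)|>\kappa$, contradicting (2). Note that $|A|\le\dens(A)^{\aleph_0}\le\mathfrak c^{\aleph_0}=\mathfrak c$, so $\kappa<|A|\le\mathfrak c$. The crux is a metric lemma: every metric space $A$ with $\dens(A)<\min\{|A|,\mathfrak c\}$ admits a Lipschitz map $g:A\to\IR$ with $|g(A)|>\dens(A)$. Granted this lemma, $g$ is uniformly continuous, extends to $\bar g:\bar A\to\IR$ by density, and extends further to a continuous $f:X\to\IR$ by Tietze's theorem in the normal space $X$ applied to the closed set $\bar A$; this $f$ satisfies $|f(A)|=|g(A)|>\kappa$ as required. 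To prove the lemma I plan to fix a dense $D\subseteq A$ of size $\kappa$ and consider the 1-Lipschitz functions $g_e(a)=d_A(a,e)$ for $e\in D$, which jointly give an injection $A\hookrightarrow\IR^D$. If some $g_e$ already has $|g_e(A)|>\kappa$ we are done; otherwise, using both hypotheses $\kappa<\mathfrak c$ and $|A|\le\mathfrak c$, I would construct the desired $g$ as a suitable $\ell^1$-convergent combination $\sum_n c_n g_{e_n}$ of coordinate functions whose values separate more than $\kappa$ points of $A$, via a genericity argument on the coefficients $(c_n)$. This genericity step is the main obstacle.
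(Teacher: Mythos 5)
Your implications $(3)\Rightarrow(1)$ and $(1)\Rightarrow(2)$ are correct and essentially identical to the paper's (the auxiliary metric $d+|f(x)-f(y)|$ is exactly the paper's device; your bookkeeping via the pair $\bigl(d(a_0,b),d'(a_0,b)\bigr)$ is in fact slightly more careful than the published text). The problem is $(2)\Rightarrow(3)$, where your argument rests on an unproved ``metric lemma'' whose proposed proof does not go through as sketched. The obstacle you flag is real: if every single distance function $d(\cdot,e)$, $e\in D$, already has image of cardinality $\le\kappa$, then the value set $S=\{(d(x,e_n))_n:x\in A\}$ sits inside a product $\prod_n V_n$ with $|V_n|\le\kappa$, and there is no evident reason why some summable positive combination $\sum_n c_n g_{e_n}$ should take more than $\kappa$ values on $S$. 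A genericity argument on $(c_n)$ would have to avoid, for each pair of points to be separated, a hyperplane of coefficient sequences; there are up to $\mathfrak c$ such constraints, and neither Baire category nor a product measure on the $c_n$ controls more than countably many of them. Worse, your lemma is stated as a purely metric fact, independent of hypothesis (2), and asks for a \emph{Lipschitz} witness; the paper's proof strongly suggests that the correct witness is a continuous but highly non-Lipschitz map, so the lemma may simply be false in the form you need it.

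The paper avoids all of this by running the argument non-contrapositively: it assumes (2) holds and (3) fails for some $A$ (which one may take closed, so no uniform-continuity extension is needed --- Tietze applies directly), and then uses (2) itself to produce the separating functions. Concretely, for disjoint closed $E,F\subset A$ a Urysohn function $f:X\to[0,1]$ has $|f(A)|\le\dens(A)<\mathfrak c$ by (2), so some level $t\in(0,1)$ is missed and $A\cap f^{-1}([0,t))$ is clopen; hence $A$ is strongly zero-dimensional. By the classical embedding theorem, $A$ then embeds in $D^\w$ with $|D|=\dens(A)<\mathfrak c$, and $D^\w$ admits a continuous injection into $2^\w$ and thence into $\IR$; extending by Tietze yields a continuous $f:X\to\IR$ with $|f(A)|=|A|>\dens(A)$, contradicting (2). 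If you want to salvage your contrapositive plan, you would essentially have to reprove this zero-dimensionality-and-embedding machinery without the help of (2), which is much harder than the linear-combination shortcut you propose. As written, $(2)\Rightarrow(3)$ is a genuine gap.
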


\begin{proof} $(1)\Ra(2)$ Assume that each metric generating the topology of $X$ is economical and fix any metric $d$ generating the topology of $X$. Assume that $|f(A)|>\dens(A)$ for some continuous function $f:X\to \IR$ and some subspace $A\subset X$. In this case the set $A$ is infinite. Without loss of generality, $f(a)=0$ for some $a\in A$ and hence $$|f(A)|=\big|\{|f(x)|:x\in A\}\big|=\big|\{|f(x)-f(a)|:x\in A\}\big|.$$

It is easy to see that the metric $\rho$ on $X$ defined by
$$\rho(x,y)=d(x,y)+|f(x)-f(y)|$$
generates the topology of $X$. 

By our assumption, both metrics $d$ and $\rho$ are economical. Consequently the sets $\{d(x,a):x\in A\}$, $\{\rho(x,a):x\in A\}$ have cardinality $\le\dens(A)$. Then 
$$|f(A)|=|\{|f(x)-f(a)|:x\in A\}|=|\{\rho(x,a)-d(x,a):a\in A\}|\le\dens(A),$$which is a desired contradiction.
\smallskip

$(2)\Ra(3)$ 
Assume that $\min\{|A|,\mathfrak c\}>\dens(A)$ for some subset $A\subset X$ but $|f(A)|\le\dens(A)$ for any continuous function $f:X\to\IR$. Without loss of generality, $A$ is a closed subspace of $X$. We claim that the space $A$ is strongly zero-dimensional. Given two disjoint closed subsets $E,F\subset A$ we should find an 
open-and-closed subset $U\subset A$ such that $E\subset U\subset A\setminus F$.

By the normality of $X$ there is a continuous function $f:X\to [0,1]$ such that $f(E)\subset\{0\}$ and $f(F)\subset\{1\}$. By our hypothesis, the set $f(A)$ has cardinality $|f(A)|\le\dens(A)<\mathfrak c$. Consequently, we can find a number $t\in(0,1)\setminus f(A)$. Then $U=A\cap f^{-1}([0,t))$ is the required open-and-closed set in $A$ separating $E$ from $F$. 

By \cite[6.2.4, 7.3.15]{En}, the space $A$, being metrizable and strongly zero-dimensional, embeds into the countable power $D^\w$ of the discrete space $D$ of cardinality $|D|=\dens(A)<\mathfrak c$. Since the discrete space $D$ admits a continuous injective map into the Cantor cube $2^\w$, the countable power $D^\w$ also admits such a continuous injective map. Taking into account that the Cantor cube embeds into the real line, we conclude that the space $A$ admits a continuous injective map $g:A\to \IR$ to the real line. By Tietze-Urysohn Theorem~\cite[2.1.8]{En}, the map $g$ has a continuous extension $f:X\to\IR$. Then $|f(A)|=|g(A)|=|A|>\dens(A)$ and this is a contradiction.
\smallskip

The implication $(3)\Ra(1)$ is trivial.
\end{proof}

 \section{Premetric spaces}\label{premetric}

The definition of a metric on a set $X$ is well-known: this is a function $d:X\times X\to[0,\infty)$ satisfying four axioms:
\begin{enumerate}
\item $d(x,x)=0$,
\item $d(x,y)$ implies $x=y$,
\item $d(x,y)=d(y,x)$,
\item $d(x,z)\le d(x,y)+d(y,z)$,
\end{enumerate}
 for any points $x,y,z\in X$.

Deleting some of these axioms we obtain various generalizations of metrics: pseudometrics (they obey the axioms 1,3,4), quasimetrics (1,2,4), symmetrics (1,2,3). The most radical generalization of a metric is that of a premetric, see \cite[\S2.4]{AF}.

\begin{definition} A {\em premetric} on a set $X$ is any function $d:X\times 
 X\to[0,\infty)$ such that $d(x,x)=0$ for all $x\in X$. A {\em premetric space} is a pair $(X,d)$ consisting of a set $X$ and a premetric $d$ on $X$. In the sequel the premetric of a premetric space $X$ will be denoted by $d_X$ or just $d$ if the set $X$ is clear from the context. 
\end{definition}

A map $f:X\to Y$ between two premetric spaces is called {\em non-expanding} if $d_Y(f(x),f(y))\le d_X(x,y)$ for all $x,y\in X$.

Premetric spaces and their non-expanding maps form a category $\PMetr$ that contains the category $\Metr$ of metric spaces as a full subcategory.
\smallskip

Many notions related to metric spaces still can be defined for premetric spaces. In particular, for any point $x$ of a premetric space $X$ we can define the ball of radius $r$ centered at $x$ by the familiar formula:
$$B_X(x,r)=\{y\in X:d_X(x,y)<r\}.$$

Also we can define a subset $U$ of a premetric space $X$ to be {\em open} if for each point $x\in U$ there is $r>0$ with $B_X(x,r)\subset U$.  Open subsets of a premetric space $X$ form a topology called the {\em premetric topology}, see \cite[\S2.4]{AF}. Saying about topological properties of premetric spaces we shall always refer to this topology.

The following proposition can be immediately derived from the definition of the premetric topology.

\begin{proposition} Each non-expanding map between premetric spaces is continuous.
\end{proposition}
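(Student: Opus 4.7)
The plan is to prove continuity via the standard criterion that preimages of open sets are open, using only the ball-based definition of the premetric topology already recorded in the excerpt.

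Let $f:X\to Y$ be a non-expanding map between premetric spaces, and let $V\subset Y$ be open in the premetric topology. I want to show that $f^{-1}(V)$ is open in $X$. Fix any point $x\in f^{-1}(V)$; then $f(x)\in V$, so by openness of $V$ there exists $r>0$ with $B_Y(f(x),r)\subset V$.

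The key step is then to check that $B_X(x,r)\subset f^{-1}(V)$ with the same radius $r$. Indeed, for any $x'\in B_X(x,r)$ we have $d_X(x,x')<r$, so the non-expanding inequality $d_Y(f(x),f(x'))\le d_X(x,x')$ yields $d_Y(f(x),f(x'))<r$, i.e.\ $f(x')\in B_Y(f(x),r)\subset V$, hence $x'\in f^{-1}(V)$. This produces the required ball around each point of $f^{-1}(V)$, so $f^{-1}(V)$ is open.

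There is really no obstacle here: the argument is the classical one used for metric and pseudometric spaces, and it transfers verbatim to the premetric setting because neither symmetry, nor the triangle inequality, nor the separation axiom was used — only the axiom $d(x,x)=0$ (ensuring $x\in B_X(x,r)$) and the definition of openness via balls. No additional notation or macro is needed, and the conclusion is exactly what the proposition asserts.
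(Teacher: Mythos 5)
Your proof is correct and is precisely the ``immediate derivation from the definition of the premetric topology'' that the paper alludes to without writing out: the non-expanding inequality transfers the ball $B_X(x,r)$ into $B_Y(f(x),r)$, and since openness in a premetric space is defined by the existence of such a ball at each point (with no appeal to symmetry or the triangle inequality), preimages of open sets are open. Nothing is missing.
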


Each subset $A$ of a premetric space $X$ carries the induced premetric $d_A=d_X|A\times A$. Then the identity inclusion $\id:A\to X$ is non-expanding and hence is continuous with respect to the premetric topologies. However this inclusion is not necessarily a topological embedding.

\begin{example}[Arens' space]\label{arens} Consider the set 
$$S_2=\{(0,0)\}\cup\{(\tfrac1n,0):n\in\IN\}\cup\{(\tfrac1n,\tfrac1{nm}):n,m\in\IN\}\subset\IR^2$$ endowed the premetric 
$$d(x,y)=\begin{cases}0&\mbox{if $x=y$,}\\
\frac1n&\mbox{if $x=(0,0)$ and $y=(\frac1n,0)$,}\\
\frac1{nm}&\mbox{if $x=(\frac1n,0)$ and $y=(\frac1n,\frac1{nm})$,}\\
1&\mbox{otherwise.}
\end{cases}
$$ The topology on $S_2$ generated by this premetric coincides with the largest topology that induces the Euclidean topology on the convergent sequences $$\{(0,0)\}\cup\{(\tfrac1n,0):n\in\IN\}\mbox{  and }\{(\tfrac1n,0)\}\cup\{(\tfrac1n,\tfrac1{nm}):m\in\IN\},\;n\in\IN.$$

It follows that $(0,0)$ is a non-isolated point of the subset $A=\{(0,0)\}\cup\{(\frac1n,\frac1{nm}):n,m\in\IN\}\subset S_2$. On the other hand, the induced premetric $d_A=d|A\times A$ on $A$ is $\{0,1\}$-valued and generates the discrete topology on $A$. This means that the indentity inclusion $\id:A\to X$ is not a topological embedding.
\end{example}

This example suggests the following 

\begin{definition} A premetric space $X$ is called {\em hereditary} if for any subset $A\subset X$ the subspace topology on $A$ coincides with the topology generated by the premetric $d_A=d_X|A^2$ induced from $X$.
\end{definition}

In other to characterize hereditary premetric spaces let us introduce another

\begin{definition} A premetric $d$ on $X$ is called {\em basic at a point} $x\in X$ if the family of balls $\{B_X(x,r)\}_{r>0}$ is a neighborhood basis at $x$. This is equivalent to saying that $x$ is an interior point of each ball $B(x,r)$, $r>0$.

A premetric space $X$ is called {\em basic} if its premetric $d_X$ is basic at each point $x\in X$. In this case the premetric $d_X$ also is called {\em basic}.
\end{definition}

\begin{theorem}\label{t3.6} A premetric space is hereditary if and only if it is basic.
\end{theorem}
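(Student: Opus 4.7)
The plan is to prove each implication separately, using the observation that the identity inclusion between a subspace and the ambient space is automatically non-expanding.

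For the ``basic implies hereditary'' direction, I would first note that for any $A\subset X$ the identity inclusion $\id\colon(A,d_A)\to(X,d_X)$ is non-expanding by construction, hence continuous by the proposition proved just above, so every subset of $A$ that is open in the subspace topology inherited from $X$ is automatically open in the premetric topology generated by $d_A$. The substantive content lies in the reverse inclusion: given $U\subset A$ open in $(A,d_A)$, pick for each $x\in U$ a radius $r_x>0$ with $B_A(x,r_x)\subset U$; since $d_X$ is basic at $x$, choose an $X$-open set $V_x$ with $x\in V_x\subset B_X(x,r_x)$, so that $V_x\cap A\subset B_X(x,r_x)\cap A=B_A(x,r_x)\subset U$. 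Then $V=\bigcup_{x\in U}V_x$ is open in $X$ and satisfies $V\cap A=U$, exhibiting $U$ as an element of the subspace topology.

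For the converse ``hereditary implies basic'' I would argue by contrapositive, mimicking the Arens-type phenomenon of Example~\ref{arens}. If $d_X$ fails to be basic, fix $x_0\in X$ and $r_0>0$ so that $x_0$ is not an interior point of $B_X(x_0,r_0)$, and set $A=\{x_0\}\cup(X\setminus B_X(x_0,r_0))$. Then $B_A(x_0,r_0)=B_X(x_0,r_0)\cap A=\{x_0\}$, so the singleton $\{x_0\}$ is open in the premetric topology induced by $d_A$. On the other hand, $\{x_0\}$ being open in the subspace topology of $A$ would require an $X$-open set $V$ with $V\cap A=\{x_0\}$, equivalently an $X$-open $V$ satisfying $x_0\in V\subset B_X(x_0,r_0)$, and the existence of such a $V$ is precisely the basicness at $x_0$ that we have excluded. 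Hence the two topologies on $A$ disagree and $X$ is not hereditary.

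The main obstacle, to my mind, is finding the right auxiliary subspace $A$ for the second implication. One must engineer $A$ so that the induced ball $B_A(x_0,r_0)$ collapses to the singleton $\{x_0\}$ (isolating $x_0$ in the $d_A$-topology) while the failure of interiorness of $x_0$ in $B_X(x_0,r_0)$ transfers faithfully to the subspace topology on $A$. Removing exactly the complement of $B_X(x_0,r_0)$ from $A$ accomplishes both simultaneously, and is the crucial idea. The forward implication is, by comparison, a routine ``patch together pointwise interiors of balls'' argument once one has the free inclusion supplied by continuity of the identity.
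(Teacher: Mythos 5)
Your proof is correct and follows essentially the same route as the paper's: the same auxiliary subspace $A=\{x_0\}\cup(X\setminus B_X(x_0,r_0))$ for the contrapositive of ``hereditary implies basic,'' and the same use of basicness to insert an $X$-open set between a point and its ball for the converse (the paper phrases this as continuity of $\id_A^{-1}$ rather than patching the $V_x$ together, but the content is identical).
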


\begin{proof} Assume that a premetric space $X$ is not basic. This means that some point $x\in X$ fails to be an interior point of some ball $B_X(x,r)$, $r>0$. It follows that $x$ is a non-isolated point of the subspace $A=\{x\}\cup (X\setminus B_X(x,r))$ of $X$. On the other hand, this point is isolated in the topology generated by the restriction $d_A=d_X|A\times A$ of the premetric $d_X$ on $A$.
This means that the premetric space $X$ is not hereditary.
\smallskip

Now assume that a premetric space $X$ is basic.
Given any subset $A\subset X$, endow it with the induced premetric $d_A=d_X|A^2$ and consider the identity inclusion $\id_A:A\to X$. The hereditary property of $X$ will follow as soon as we check that the map $\id_A$ is a topological embedding. The continuity of $\id_A$ follows from the non-expanding property of $\id_A$. To show the continuity of the inverse map $\id_A^{-1}:\id_A(A)\to A$, take any point $a\in \id_A(A)$ and a neighborhood $U_a\subset A$ of $a$. Next, find a positive $r>0$ such that $B_A(a,r)\subset U_a$. Since the space $X$ is basic, the ball $B_X(a,r)\subset X$ contains some open neighborhood $V\subset X$ of $x$. Since $\id_A^{-1}(V)\subset \id_A^{-1}(B_X(a,r))=B_A(a,r)\subset U_a$, we see that the map $\id_A^{-1}:\id_A(A)\to X$ is continuous.
\end{proof}

Premetric spaces are tightly connected with weakly first-countable spaces introduced by A.V.~Arhan\-gel'skii in \cite{Arh}. 
We recall that a topological space $X$ is {\em weakly first-countable} if to each point $x\in X$ one can assign a decreasing sequence
$(B_n(x))_{n\in\w}$ of subsets of $X$ that contain $x$ so that a subset
$U\subset X$ is open if and only if for each $x\in U$ there is $n\in\w$ with $B_n(x)\subset U$. 

It is clear that each first-countable space is weakly first-countable. The Arens' space defined in Example~\ref{arens} is weakly first-countable but not first countable.

The following proposition shows that premetric spaces relate to weakly first-countable spaces in the same way as metric spaces relate to metrizable topological spaces.

\begin{proposition}\label{p3.7} A topological space $X$ is weakly first-countable (resp. first-countable) if and only if the topology of $X$ is generated by a premetric (resp. by a basic premetric).
\end{proposition}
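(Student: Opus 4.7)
My plan is to prove both directions by explicitly translating between premetrics on $X$ and the data $(B_n(x))_{n\in\w}$ that appears in the definition of weak first-countability, with a small refinement handling the basic/first-countable case.

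For the forward direction, I would assume the topology of $X$ is generated by a premetric $d$ and simply set $B_n(x):=B_X(x,1/(n+1))$. This is a decreasing sequence of sets containing $x$, and the definition of the premetric topology says that $U\subset X$ is open iff for every $x\in U$ some $B_X(x,r)\subset U$, which is plainly equivalent to some $B_n(x)\subset U$. Hence $X$ is weakly first-countable. If moreover $d$ is basic, then by definition each ball $B_n(x)$ is a neighborhood of $x$, so the countable family $(B_n(x))$ forms a neighborhood base at $x$, and $X$ is first-countable.

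For the converse, given a weakly first-countable space $X$ with assigned sequences $(B_n(x))_{n\in\w}$, I would define a premetric by $d(x,x)=0$ and, for $y\ne x$,
$$d(x,y)=\inf\bigl(\{1/(n+1):n\in\w,\ y\in B_n(x)\}\cup\{1\}\bigr).$$
A direct unwinding of the definition shows $B_X(x,1/(n+1))=B_{n+1}(x)$ for every $n\in\w$, so the two characterizations of ``open'' (via $d$-balls and via the given sequences) agree, i.e., $d$ generates the topology of $X$. If $X$ is first-countable, I would first arrange the $B_n(x)$ to be open and to form a neighborhood base at $x$; then each ball $B_X(x,1/(n+1))=B_{n+1}(x)$ is itself an open neighborhood of $x$, and for any $r>0$ we may choose $n$ large enough that $B_X(x,1/(n+1))\subset B_X(x,r)$, witnessing that $d$ is basic.

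The main obstacle I expect is pinning down a formula for $d$ that makes its open balls recover the prescribed sequences $(B_n(x))$ closely enough to match both topologies; once the inf-of-reciprocals definition above is in place, both the weakly first-countable / premetrizable equivalence and its first-countable / basic strengthening reduce to directly comparing two manifestly equivalent descriptions of the same collection of open sets, with no nontrivial estimates required.
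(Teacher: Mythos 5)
Your proof is correct and follows essentially the same route as the paper: the forward direction uses the balls $B_X(x,\tfrac1n)$ as the witnessing sequences, and the converse defines the premetric as an infimum of reciprocals over the indices $n$ with $y\in B_n(x)$ (the paper uses $2^{-n}$ in place of $1/(n+1)$, with the same effect). Your explicit verification that $B_X(x,1/(n+1))=B_{n+1}(x)$ and the cap $\cup\{1\}$ for the empty-set case are minor refinements of the paper's argument, not a different approach.
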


\begin{proof} If the topology of $X$ is generated by a (basic) premetric $d:X\times X\to[0,\infty)$, then the family of balls $\big(B_X(x,\frac1n)\big)_{n\in\IN}$, $x\in X$, witnesses that $X$ is weakly first-countable (resp. first-countable). 

Now assume conversely that $X$ is weakly first-countable  and for every $x\in X$ fix a decreasing sequence of sets  $\big\{B_n(x)\big\}_{n\in\w}$ witnessing that $X$ is weakly first countable. If $X$ is first-countable, then we can additionally require that each set $B_n(x)$ is a neighborhood of $x$.

Those sequences determine the premetric $d:X\times X\to[0,\infty)$ defined by
$$d(x,y)=\inf\{2^{-n}:y\in B_n(x)\}$$ for $(x,y)\in X\times X$.
This premetric is basic if and only if each $B_n(x)$ is a neighborhood of $x$ in $X$.

Finally, observe the topology of $X$ is generated by the premetric $d$. 
\end{proof}

Our next aim is to characterize premetric spaces whose topology is first-countable.
It is clear that each basic premetric space is first-countable.
However the converse is not true.

\begin{example} On the set of integers $\IZ$ consider the premetric $$d(n,m)=\begin{cases}2^{-m}&\mbox{ if $n=0$ and $m>0$}\\
0&\mbox{otherwise}.
\end{cases}$$The topology on $\IZ$ generated by this premetric is anti-discrete and hence first-countable. However, the premetric space $(\IZ,d)$ is not basic.
\end{example}

On the other hand, we shall show that for sequentially Hausdorff premetric spaces the first-countability of the premetric topology is equivalent to the basic property of the premetric.

A topological space $X$ is called {\em sequentially Hausdorff\/} if any convergent sequence $\{x_n\}_{n\in\w}\subset X$ has a unique limit in $X$. Each Hausdorff space is sequentially Hausdorff. The converse is not true: any uncountable space $X$ endowed the cocountable topology $$\tau=\{\emptyset\}\cup\{U\subset X:|X\setminus U|\le\aleph_0\}$$is sequentially Hausdorff but not Hausdorff.

We shall characterize sequentially Hausdorff premetric spaces as 2-separating premetric spaces.

\begin{definition} A premetric space $X$ is called 
\begin{itemize}
\item {\em 1-separating} if for any distinct points $x,y\in X$ there is $r>0$ such that $y\notin B_X(x,r)$;
\item {\em 2-separating} if for any distinct points $x,y\in X$ there is $r>0$ such that $B_X(x,r)\cap B_X(y,r)=\emptyset$.
\end{itemize}
\end{definition}

The following characterization of 1-separating premetric spaces is immediate.

\begin{proposition}  A premetric space $X$ is 1-separated if and only if it is a topological $T_1$-space.
\end{proposition}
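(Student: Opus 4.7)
The plan is to unpack both definitions directly and observe that the $1$-separating condition is exactly the translation into premetric language of the statement that every singleton is closed, using the formulation of the premetric topology given earlier (namely that $U$ is open iff for each $x\in U$ there is $r>0$ with $B_X(x,r)\subset U$).

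For the forward direction, I would assume that $X$ is $1$-separating and fix an arbitrary point $y\in X$. To show that $\{y\}$ is closed, I will show that $X\setminus\{y\}$ is open in the premetric topology. Pick any $x\in X\setminus\{y\}$; by $1$-separation there exists $r>0$ with $y\notin B_X(x,r)$, hence $B_X(x,r)\subset X\setminus\{y\}$. By the definition of the premetric topology this means $X\setminus\{y\}$ is open. Since $y$ was arbitrary, every singleton is closed, so $X$ is a $T_1$-space.

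For the converse, assume $X$ is $T_1$ and take two distinct points $x,y\in X$. Since $\{y\}$ is closed, the complement $X\setminus\{y\}$ is open, and it contains $x$. By the definition of openness in the premetric topology, there exists $r>0$ with $B_X(x,r)\subset X\setminus\{y\}$, and this $r$ witnesses $y\notin B_X(x,r)$, so $X$ is $1$-separating.

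There is no real obstacle here: once one recalls that balls $B_X(x,r)$ need not themselves be open (so one has to appeal to the defining property of open sets rather than to the balls directly), both implications reduce to one-line verifications. The mild subtlety is precisely this distinction, which is why the analogous characterization in the basic (i.e.\ hereditary, by Theorem~\ref{t3.6}) setting is even easier; in the general premetric setting one must still invoke the openness criterion explicitly.
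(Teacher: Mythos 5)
Your proof is correct and is exactly the ``immediate'' verification the paper has in mind (the paper states this proposition without proof): both directions follow from unwinding the definition of the premetric topology, and you rightly note that one must invoke the openness criterion rather than assume the balls themselves are open. Nothing to add.
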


It is clear that a premetric space is 2-separating if its topology is Hausdorff. The converse is not true:

\begin{example} Take any non-metrizable first-countable compact Hausdorff space $K$. By Katetov Theorem \cite{Kat} (see also \cite[4.5.15]{En}), the cube $K^3$ is not hereditarily normal and hence contains a non-normal subspace $Y\subset K^3$. Consequently, $Y$ contains two closed disjoint sets $A,B\subset Y$ that have no disjoint neighborhoods in $Y$. It follows that the closures $\bar A$ and $\bar B$ of those sets in $K^3$ contain a common point $c\in \bar A\cap\bar B$. Fix a decreasing neighborhood base $(B_n(x))_{n\in\IN}$ at each point $x\in K^3$ such that $B_1(x)=K^3$.

It is easy to see that the space $X=Y\cup\{A,B\}$ endowed with the premetric $d$ defined by
$$d(x,y)=\begin{cases}\inf\{\tfrac1n:y\in B_n(x)\}&\mbox{if $x,y\in Y$},\\
\inf\{\tfrac1n:y\in B_n(c)\}&\mbox{if $x=A$ and $y\in A$},\\  
\inf\{\tfrac1n:y\in B_n(c)\}&\mbox{if $x=B$ and $y\in B$},\\  
1&\mbox{otherwise}
\end{cases}
$$
is 2-separating but not Hausdorff.
\end{example}

On the other hand, we have the following characterization:

\begin{theorem} A premetric space  is 2-separating if and only if it is sequentially Hausdorff.
\end{theorem}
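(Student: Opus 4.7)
For the direction sequentially Hausdorff $\Rightarrow$ 2-separating, I would argue the contrapositive. Suppose $X$ fails 2-separation at distinct points $x,y\in X$: then $B_X(x,1/n)\cap B_X(y,1/n)\neq\emptyset$ for every $n\in\IN$. Choose $z_n$ in each such intersection, so that $d_X(x,z_n)<1/n$ and $d_X(y,z_n)<1/n$. Since every open neighbourhood of a point in a premetric space contains some ball $B_X(\cdot,s)$ around it (by the very definition of the premetric topology given in Section~\ref{premetric}), premetric convergence implies topological convergence. Hence $(z_n)$ converges topologically to both $x$ and $y$, witnessing the failure of sequential Hausdorffness.

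For the converse, assume $X$ is 2-separating and suppose a sequence $(z_n)$ converges topologically to two distinct points $x,y$. By 2-separation pick $r>0$ with $B_X(x,r)\cap B_X(y,r)=\emptyset$. Since this intersection is empty, for each $n$ at least one of $d_X(x,z_n)\geq r$ or $d_X(y,z_n)\geq r$ holds, so by pigeonhole we may pass to a subsequence where (say) $d_X(x,z_n)\geq r$ for every $n$. The problem thereby reduces to deriving a contradiction from the assumption that a sequence $(z_n)$ with $d_X(x,z_n)\geq r$ converges topologically to $x$ in a 2-separating premetric space.

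To close the argument, for each $n$ apply 2-separation to the distinct pair $(x,z_n)$ to pick $\sigma_n>0$ with $B_X(x,\sigma_n)\cap B_X(z_n,\sigma_n)=\emptyset$; in particular $x\notin B_X(z_n,\sigma_n)$. The plan is to use these witnesses to exhibit an open neighbourhood $U$ of $x$ in $X$ that misses infinitely many $z_n$, contradicting $z_n\to x$ topologically. The main obstacle is that the ball $B_X(x,r)$ itself need not be open in a general premetric space (the Arens' space of Example~\ref{arens} is 2-separating yet $(0,0)$ is not an interior point of $B((0,0),1/2)$), so one cannot simply take $U=B_X(x,r)$. The construction of $U$ must instead be assembled from the 2-separation data in a way that guarantees openness—morally in the same spirit as the Arens'-type neighbourhoods $V_f$ which carve out enough of the ball around $x$ while excluding the $z_n$'s via the disjoint balls $B_X(z_n,\sigma_n)$. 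Verifying that such a $U$ is open at $x$ (where $B_X(x,r)$ itself avoids $\{z_n\}$) and at other points (where $T_1$-ness, a consequence of 2-separation via 1-separation and Proposition~\ref{p3.7}, gives some room) is where I expect the technical heart of the proof to lie.
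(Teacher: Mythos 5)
Your first direction is correct and is exactly the paper's (trivial) half: premetric convergence implies topological convergence, so a point $z_n\in B_X(x,\frac1n)\cap B_X(y,\frac1n)$ gives a sequence converging to both $x$ and $y$. Your reduction of the converse to the claim that, in a 2-separating premetric space, no sequence with $d_X(x,z_n)\ge r$ for all $n$ can converge topologically to $x$ is also correct; this claim is precisely the nontrivial half of Proposition~\ref{p3.13}, which is what the paper derives the theorem from. But you have not proved that claim: you explicitly defer ``the technical heart'' --- the production of an open neighbourhood of $x$ omitting infinitely many $z_n$ --- and the strategy you do sketch (separating $x$ from each $z_n$ by disjoint balls $B_X(x,\sigma_n)$ and $B_X(z_n,\sigma_n)$) does not close, because, as you yourself observe, the balls centred at $x$ need not be neighbourhoods of $x$, and an intersection of the $B_X(x,\sigma_n)$ carries no openness information either. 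So there is a genuine gap, and it sits exactly at the one place where 2-separation has to do real work.

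The missing idea is to apply 2-separation not to the pairs $(x,z_n)$ but to an auxiliary \emph{premetric cluster point} of the sequence against all other points. Concretely (this is the paper's proof of Proposition~\ref{p3.13}): consider $U=X\setminus\{z_n:n\in\IN\}$, which contains $x$ since $d_X(x,z_n)\ge r>0$. Either some ball $B_X(u,\e)$ lies in $U$ for every $u\in U$, so $U$ is open and already contradicts $z_n\to x$; or there is $y\in U$ every ball around which meets $\{z_n\}$, yielding an infinite $M$ with $d_X(y,z_m)\to 0$ along $M$. Then $V=X\setminus(\{y\}\cup\{z_m\}_{m\in M})$ \emph{is} open: for $z\in V$, 2-separation gives $\e>0$ with $B_X(z,\e)\cap B_X(y,\e)=\emptyset$; all but finitely many $z_m$, $m\in M$, lie in $B_X(y,\e)$ and hence outside $B_X(z,\e)$, and $y\notin B_X(z,\e)$ either, so after shrinking the radius to exclude the finitely many exceptions a ball around $z$ lies in $V$. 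Since $d_X(x,z_m)\ge r$ while $d_X(y,z_m)\to 0$, we have $x\ne y$, so $V$ is an open neighbourhood of $x$ missing the infinitely many $z_m$, contradicting $z_n\to x$. Without this detour through the cluster point $y$, I do not see how your outline can be completed.
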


This theorem can be easily derived from the following useful characterization of convergence in 2-separating premetric spaces.

\begin{proposition}\label{p3.13} A sequence $(x_n)_{n\in\w}$ in a 2-separating premetric space $X$ converges to a point $x\in X$ if and only if $\lim\limits_{n\to\infty}d_X(x,x_n)=0$.
\end{proposition}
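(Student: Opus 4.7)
The $(\Leftarrow)$ direction is immediate from the definition of the premetric topology: any open neighborhood $U$ of $x$ contains a ball $B_X(x,r)$, so the hypothesis $d_X(x,x_n)\to 0$ places $x_n$ inside $B_X(x,r)\subset U$ eventually.

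For the $(\Rightarrow)$ direction I would argue by contradiction. Suppose $(x_n)\to x$ topologically but $d_X(x,x_n)\not\to 0$; extracting a subsequence produces $(y_k)_{k\in\w}$ with $d_X(x,y_k)\ge\e$ for some fixed $\e>0$. This subsequence still converges topologically to $x$, and each $y_k$ differs from $x$. Invoking the 2-separating hypothesis at every pair $(x,y_k)$ yields radii $r_k\in(0,\e]$ with $B_X(x,r_k)\cap B_X(y_k,r_k)=\emptyset$, so that $y_k\in B_X(y_k,r_k)$ is disjoint from $B_X(x,r_k)$.

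The heart of the argument is to assemble these local ball-separations into a single open neighborhood $V$ of $x$ in the premetric topology that misses the entire range $\{y_k:k\in\w\}$. Once such a $V$ is produced, the topological convergence $(y_k)\to x$ forces $y_k\in V$ eventually, contradicting $y_k\in B_X(y_k,r_k)\subset X\setminus V$. The candidate
$$V=\bigl\{z\in X:\exists\,\eta>0,\ B_X(z,\eta)\cap B_X(y_k,r_k)=\emptyset\text{ for every }k\in\w\bigr\}$$
automatically excludes each $y_k$ (since $y_k\in B_X(y_k,r_k)$).

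The main technical obstacle — and the step where 2-separating is genuinely used — is to verify that $V$ is open in the premetric topology and contains $x$. Both reduce to exhibiting a single $\eta>0$ no larger than every $r_k$, so one must either refine the subsequence $(y_k)$ so that the separating radii stay uniformly bounded below along it, or take each $r_k$ to be the supremum of admissible values in the 2-separating condition and exploit the resulting maximality. Because the premetric carries no triangle inequality, passing from a ball-witness at $x$ to a ball-witness at every point of a ball around $x$ is the delicate point; I expect it to be handled by a careful refinement of $(y_k)$ together with the ball characterization of openness in the premetric topology, after which the desired contradiction follows at once from $(y_k)\to x$.
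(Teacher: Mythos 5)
Your ``if'' direction is fine and matches the paper. In the ``only if'' direction, however, your argument stops exactly where the real work begins, and the route you sketch does not close the gap. For your candidate
$$V=\bigl\{z\in X:\exists\,\eta>0,\ B_X(z,\eta)\cap B_X(y_k,r_k)=\emptyset\mbox{ for all }k\bigr\}$$
already the membership $x\in V$ requires a single $\eta$ with $0<\eta\le\inf_k r_k$; but 2-separation is a purely pairwise condition and gives no uniform lower bound on the radii separating $x$ from the various $y_k$. Neither of your proposed remedies is guaranteed to produce one: the supremum of admissible radii at the pair $(x,y_k)$ can genuinely tend to $0$ as $k\to\infty$, and no refinement of the subsequence is promised to stop that. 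Likewise, openness of $V$ would require transferring a ball-witness from $z$ to every point of some ball around $z$, and, as you yourself note, there is no triangle inequality with which to do this. These deferred verifications are not routine; they are the theorem, so as written the proof has a genuine gap.

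The paper sidesteps the uniformity problem by separating from a different point. With $N=\{n\in\w:x_n\notin B_X(x,r)\}$ infinite, the set $U=X\setminus\{x_n\}_{n\in N}$ cannot be open (it would be a neighborhood of $x$ missing infinitely many terms), so some $y\in U$ has every ball $B_X(y,\e)$ meeting $\{x_n\}_{n\in N}$, which yields an infinite $M\subset N$ with $d_X(y,x_m)\to0$. One then checks that $V=X\setminus(\{y\}\cup\{x_m\}_{m\in M})$ is open: for $z\in V$, 2-separation gives one $\e>0$ with $B_X(z,\e)\cap B_X(y,\e)=\emptyset$; since all but finitely many $x_m$ lie in $B_X(y,\e)$, the ball $B_X(z,\e)$ automatically misses all but finitely many of them, and shrinking the radius removes the finitely many exceptions. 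The premetric accumulation of the $x_m$ at the auxiliary point $y$ supplies precisely the uniformity that is unavailable when you try to separate $x$ from each bad term individually. Finally $d_X(x,x_m)\ge r$ together with $d_X(y,x_m)\to0$ forces $x\ne y$, so $V$ is an open neighborhood of $x$ omitting infinitely many terms of the sequence, contradicting convergence. I recommend reworking your argument along these lines.
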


\begin{proof} The ``if'' part is trivial. To prove the ``only if'' part, assume that a  sequence $(x_n)_{n\in\w}$ converges to $x$ but $\lim\limits_{n\to\infty}d_X(x,x_n)\ne0$. This means that for some $r>0$ the set $N=\{n\in\w:x_n\notin B_X(x,r)\}$ is infinite. The convergence of the sequence $(x_n)$ to $x$ implies that the set $U=X\setminus\{x_n\}_{n\in N}$ is not open in $X$.

We shall derive a contradiction showing that for every $y\in U$ there is $\e>0$ with $B_X(y,\e)\subset U$. Assuming that no such $\e$ exists, we conclude that for some infinite subset $M\subset N$ we get $d_X(y,x_m)\to 0$ as $M\ni m\to\infty$.
In this case we show that the set $V=X\setminus (\{y\}\cup\{x_m\}_{m\in M})$ is open in $X$. Indeed given any point $z\in V$, we can use the 2-separating property of $X$ and  find $\e>0$ so small that $B_X(z,\e)\cap B(y,\e)=\emptyset$. Since the set $F=\{m\in M:x_m\notin B_X(y,\e)$ is finite, we can find a positive $\delta\le\e$ such that $B_X(z,\delta)\cap F=\emptyset$. It follows that $B(z,\delta)\subset V$, witnessing that the set $V$ is open.

Taking into account that $\{x_m\}_{m\in M}\cap B_X(x,r)=\emptyset$ and $\lim\limits_{M\ni m\to\infty}d_X(y,x_m)\to 0$, we conclude that $x\ne y$ and hence $V$ is an open neighborhood of $x$. Since $M\subset\{n\in\w:x_n\notin V\}$ is infinite, we conclude that the sequence $(x_n)_{n\in\w}$ does not converge to $x$, which is a desired contradiction.
\end{proof}

Now we are ready to characterize first-countable spaces among 2-separated premetric spaces. This will be done with help of the Fr\'echet-Urysohn property.

We recall that a topological space $X$ is
\begin{itemize}
\item {\em Fr\'echet-Urysohn} if for each $A\subset X$ and $x\in \cl_X(A)$ there is a sequence $\{a_n\}_{n\in\w}\subset A$ that converges to $x$;
\item {\em sequential} if for each non-closed subset $A\subset X$ there is a sequence $\{a_n\}_{n\in\w}\subset A$ that converges to a point $x\in X\setminus A$.
\end{itemize}

By \cite{Arh}, each weakly first-countable space is sequential. Combining this result with Proposition~\ref{p3.7} we get

\begin{proposition} Each premetric space is sequential.
\end{proposition}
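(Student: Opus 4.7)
The plan is to observe that the proposition is immediate from Proposition~\ref{p3.7} together with the cited theorem of Arhangel'skii \cite{Arh}: by Proposition~\ref{p3.7}, a topological space is weakly first-countable if and only if its topology is generated by a premetric, so every premetric space is weakly first-countable; and Arhangel'skii's theorem then delivers sequentiality.

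If one preferred a direct argument that bypasses \cite{Arh}, I would unpack the definition of the premetric topology as follows. Let $(X,d)$ be a premetric space and let $A\subset X$ be a non-closed subset. Then $X\setminus A$ fails to be open, so by definition of the premetric topology there exists a point $x\in X\setminus A$ such that for every $r>0$ the ball $B_X(x,r)$ meets $A$. For each $n\in\IN$ choose $a_n\in B_X(x,\tfrac1n)\cap A$.

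To finish, I would verify that the sequence $(a_n)_{n\in\IN}$ converges to $x$ in the premetric topology. Given any open neighborhood $U\ni x$, the definition of ``open'' furnishes an $r>0$ with $B_X(x,r)\subset U$; then for every $n>1/r$ we have $d_X(x,a_n)<\tfrac1n<r$, so $a_n\in B_X(x,r)\subset U$. Thus $(a_n)\subset A$ converges to $x\in X\setminus A$, witnessing that $X$ is sequential.

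There is no real obstacle here: the only subtle point is that convergence of $d_X(x,a_n)$ to $0$ always entails topological convergence $a_n\to x$ in an arbitrary premetric space, even though the converse implication can fail without the 2-separating hypothesis (cf.\ Proposition~\ref{p3.13}). Since only this easy direction is used, the proof is a few lines.
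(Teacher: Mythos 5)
Your proposal is correct and matches the paper's treatment exactly: the paper likewise notes that the result follows from Proposition~\ref{p3.7} combined with Arhangel'skii's theorem that weakly first-countable spaces are sequential, and then gives precisely your direct argument (choosing $a_n\in B_X(x,\tfrac1n)\cap A$ for a witness $x$ of the non-openness of $X\setminus A$ and checking convergence from the definition of the premetric topology). Your remark that only the easy implication ``$d_X(x,a_n)\to 0$ implies $a_n\to x$'' is needed, with no 2-separation hypothesis, is also the right point to flag.
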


\begin{proof} We shall give a direct proof of this important fact for the convenience of the reader. We need to show that a subset $A\subset X$ of a premetric space is closed if for each convergent sequence $\{a_n\}_{n\in\w}\subset A$ we get $\lim\limits_{n\to\infty}x_n\in A$. Assuming that such a set $A$ is not closed in the premetric topology of $X$, we can find a point $x\in X\setminus A$ such that $B_X(x,\frac1n)\not\subset X\setminus A$ for any $n\in\IN$. This allows us to choose a point $a_n\in B_X(x,\frac1n)\cap A$. The definition of the premetric topology on $X$ guarantees that the sequence $\{a_n\}_{n\in\IN}\subset A$ converges to the points $x\in X\setminus A$, which contradicts the choice of the set $A$.
\end{proof}

The following theorem characterizes first-countable premetric spaces among 2-separating premetric spaces.

\begin{proposition} For a 2-separating premetric space $X$ the following conditions are equivalent:
\begin{enumerate}
\item $X$ is basic;
\item $X$ is hereditary.
\item $X$ is first-countable;
\item $X$ is Fr\'echet-Urysohn;
\end{enumerate}
If the equivalent conditions (1)-(4) hold, then $X$ is Hausdorff.
\end{proposition}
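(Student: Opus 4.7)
The plan is to close the loop $(1)\Rightarrow(3)\Rightarrow(4)\Rightarrow(1)$, using Theorem~\ref{t3.6} for the equivalence $(1)\Leftrightarrow(2)$, and to read off the Hausdorff conclusion from $(1)$ together with 2-separation.

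The implication $(1)\Rightarrow(2)$ is Theorem~\ref{t3.6}, and $(2)\Rightarrow(1)$ is its converse half (also in Theorem~\ref{t3.6}), so I can appeal to that theorem directly. For $(1)\Rightarrow(3)$, if $X$ is basic then by definition each $B_X(x,r)$ is a neighborhood of $x$, so the countable family $\{B_X(x,\tfrac1n):n\in\IN\}$ is a neighborhood base at $x$, giving first-countability. The implication $(3)\Rightarrow(4)$ is the standard fact that first-countable spaces are Fr\'echet-Urysohn: given $x\in\cl_X(A)$, pick $a_n\in U_n\cap A$ where $(U_n)$ is a decreasing countable neighborhood base at $x$, and $(a_n)$ converges to $x$.

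The main step, which genuinely uses the 2-separation hypothesis, is $(4)\Rightarrow(1)$. Suppose $X$ is Fr\'echet-Urysohn and 2-separating but fails to be basic. Then some point $x\in X$ is not an interior point of some ball $B_X(x,r)$, so $x$ lies in the closure of the complement $A=X\setminus B_X(x,r)$. By the Fr\'echet-Urysohn property there is a sequence $(a_n)_{n\in\w}\subset A$ converging to $x$ in the premetric topology. Now I invoke Proposition~\ref{p3.13}, whose 2-separating hypothesis forces $d_X(x,a_n)\to 0$. But $a_n\notin B_X(x,r)$ means $d_X(x,a_n)\ge r$ for every $n$, a contradiction. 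This is the only place the 2-separation is essential, and it is precisely what Proposition~\ref{p3.13} was designed to handle.

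Finally, for the Hausdorff claim assuming (1)--(4) hold: take distinct $x,y\in X$. By 2-separation choose $r>0$ with $B_X(x,r)\cap B_X(y,r)=\emptyset$. Since $X$ is basic, each of these balls contains an open neighborhood of its center (indeed is itself open at its center), so these are disjoint open neighborhoods of $x$ and $y$. Thus $X$ is Hausdorff. I expect no genuine obstacle; the only subtle point is remembering that in a general premetric space balls need not be open, which is exactly why 2-separation alone does not give Hausdorff and why the basic property is needed here.
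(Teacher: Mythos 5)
Your proof is correct and follows essentially the same route as the paper: Theorem~\ref{t3.6} for $(1)\Leftrightarrow(2)$, the trivial implications $(1)\Rightarrow(3)\Rightarrow(4)$, and Proposition~\ref{p3.13} applied to a sequence in $X\setminus B_X(x,r)$ converging to $x$ for $(4)\Rightarrow(1)$. You additionally spell out the Hausdorff conclusion (disjoint balls from 2-separation, each containing an open neighborhood of its center by the basic property), which the paper asserts without proof; that argument is fine.
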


\begin{proof} The equivalene $(1)\Leftrightarrow(2)$ has been proved in Theorem~\ref{t3.6}. The implications $(1)\Ra(3)\Ra(4)$ are trivial. 

$(4)\Ra(1)$ Assume that a 2-separated premetric space $X$ is Fr\'echet-Urysohn. Assuming that $X$ is not basic, we would find a point $x\in X$ and a radius $r>0$ such that $x$ is not an interior point of the ball $B_X(x,r)$. Since $X$ is Fr\'echet-Urysohn, there is a sequence $\{x_n\}_{n\in\w}\subset X\setminus B_X(x,r)$ that converges to $x$. But this contradicts Proposition~\ref{p3.13}.
\end{proof} 

\section{The sequential decomposition of a topological space}\label{dec}

In this section we shall describe a functor $\Dec:\Top\to\PMetr$ assigning to each topological space $X$ a premetric space $\Dec X$ called the {\em sequential decomposition} of $X$. This space is defined as follows.

Let $S_0=\{0\}\cup\{\frac1n:n\in\IN\}$ be a standard convergent sequence on the real line.
Then each convergent sequence $(x_n)_{n\in\IN}$ in a topological space $X$ can be identified with the continuous map $f:S_0\to X$ that maps $\frac1n$ onto $x_n$ and $0$ onto the limit $\lim\limits_{n\to\infty}x_n$.

Therefore, the set $C(S_0,X)$ of all continuous functions from $S_0$ to $X$ can be identified with the family of all convergent sequences in $X$.

The premetric space $\Dec X$ is defined as the set $$\Dec X=C(S_0,X)\times S_0$$ endowed with the premetric 
$$d_{\Dec X}\big((f,t),(g,s)\big)=\begin{cases}0&\mbox{if $f(t)=g(s)$},\\
|t-s|&\mbox{if $f(t)\ne g(s)$ but $f=g$},\\
1&\mbox{otherwise}.
\end{cases}
$$

Any continuous map $f:X\to Y$ between topological spaces induces a non-expanding map $$\Dec f:\Dec X\to\Dec Y,\quad\Dec f:(\xi,s)\mapsto (f\circ\xi,s).$$ 

In such a way we have defined a functor
$$\Dec:\Top\to\PMetr$$ from the category $\Top$ of topological spaces and their continuous maps to the category $\PMetr$ of premetric spaces and their non-expanding maps.
\smallskip

For every topological space $X$ the spaces $\Dec X$ and $X$ are linked by the {\em calculation map} $$c_X:\Dec X\to X,\quad c_X:(f,x)\mapsto f(x).$$

This map is natural in the sense that for any continuous map $f:X\to Y$ between topological spaces the following diagram is commutative:
$$\begin{CD}
\Dec X@>{\Dec f}>>\Dec Y\\
@V{c_X}VV@VV{c_Y}V\\
X@>>f>Y
\end{CD}
$$

The calculation map will be used for describing the topology of the space $\Dec X$. Namely, we shall show that a subset $U\subset\Dec X$ is open if and only if $U=c_X^{-1}(V)$ for some sequentially open subset $V\subset U$.

A subset $V\subset X$ is called {\em sequentially open} if for each sequence $(x_n)_{n\in\w}$ in $X$ that converges to a point $x\in V$ there is $n\in\w$ such that $x_m\in V$ for all $m\ge n$. 

\begin{lemma}\label{l4.1} A subset $U\subset\Dec X$ is open if and only if $U=c_X^{-1}(V)$ for some sequentially open subset $V\subset U$.
\end{lemma}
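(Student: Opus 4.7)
\medskip

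\noindent\textbf{Proof plan.} I read the ``$V\subset U$'' in the statement as a typo for $V\subset X$ (so $V$ is a sequentially open subset of $X$), and I proceed accordingly. The key computation is to unpack the balls in $\Dec X$. Directly from the definition of $d_{\Dec X}$, for any $(f,t)\in\Dec X$ and any $0<r\le 1$,
\[
B_{\Dec X}\bigl((f,t),r\bigr)=c_X^{-1}\bigl(\{f(t)\}\bigr)\;\cup\;\bigl\{(f,s):s\in S_0,\ |s-t|<r\bigr\},
\]
since the first piece collects the points at distance $0$ from $(f,t)$ (those $(g,s)$ with $g(s)=f(t)$) and the second piece collects the points at distance $|s-t|<r\le 1$ (those sharing the same first coordinate $f$). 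In particular, the whole fibre $c_X^{-1}(\{f(t)\})$ sits inside \emph{every} positive-radius ball centred at $(f,t)$.

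For the ``only if'' direction, suppose $U\subset\Dec X$ is open. The observation above shows that whenever $(f,t)\in U$, some ball around $(f,t)$ is contained in $U$, and that ball already swallows $c_X^{-1}(\{f(t)\})$; so $U$ is saturated under $c_X$, meaning $U=c_X^{-1}(V)$ for $V:=c_X(U)\subset X$. To check that $V$ is sequentially open, I take a sequence $(x_n)_{n\in\IN}$ in $X$ converging to a point $x\in V$, and turn it into the continuous map $f:S_0\to X$ with $f(\tfrac1n)=x_n$ and $f(0)=x$. Then $c_X(f,0)=x\in V$, so $(f,0)\in U$; since $U$ is open there is $r>0$ with $B_{\Dec X}((f,0),r)\subset U$, which forces $(f,\tfrac1n)\in U$ and hence $x_n\in V$ for all $n$ with $\tfrac1n<r$.

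For the ``if'' direction, I start with a sequentially open $V\subset X$, set $U:=c_X^{-1}(V)$, and fix $(f,t)\in U$, so $f(t)\in V$. I split on the point $t\in S_0$. If $t=\tfrac1k$ is isolated in $S_0$, I pick $r>0$ so small that $\{s\in S_0:|s-t|<r\}=\{t\}$; then the explicit description of the ball gives $B_{\Dec X}((f,t),r)=c_X^{-1}(\{f(t)\})\cup\{(f,t)\}\subset c_X^{-1}(V)=U$. If $t=0$, then $f$ is a convergent sequence in $X$ with limit $f(0)\in V$, so the sequential openness of $V$ supplies $N\in\IN$ with $f(\tfrac1m)\in V$ for all $m\ge N$; taking $r=\tfrac1N$, every $(g,s)$ in $B_{\Dec X}((f,0),r)$ either lies in $c_X^{-1}(\{f(0)\})\subset U$ or has the form $(f,s)$ with $s\in S_0$, $|s|<r$, and in that second case $s=0$ or $s=\tfrac1m$ with $m>N$, so $f(s)\in V$ and $(f,s)\in U$.

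The only potentially subtle step is the saturation argument, and it is made easy precisely by the fact that the first clause of the premetric $d_{\Dec X}$ declares any two pairs with a common $c_X$-value to be at distance $0$; once this is noticed, both directions are short. The rest is a matter of matching the sequential-openness criterion against the geometry of $S_0$ (one isolated-point case and one limit-point case), using continuity of $f:S_0\to X$ to package a convergent sequence into a single element of $\Dec X$.
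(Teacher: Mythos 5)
Your proof is correct and follows essentially the same route as the paper's: both directions rest on the observation that every positive-radius ball swallows the whole $c_X$-fibre (hence open sets are saturated), plus the two-case analysis on $t$ isolated versus $t=0$ and the packaging of a convergent sequence as a point $(f,0)\in\Dec X$. Your reading of ``$V\subset U$'' as a typo for $V\subset X$ is the intended one, and your explicit description of the balls just makes the paper's implicit computation visible.
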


\begin{proof} First assume that $U=c_X^{-1}(V)$ for some sequentially open subset $V\subset X$. In order to show that $U$ is open in $\Dec X$ we should find for any point $a\in U\subset\Dec X$ a radius $r>0$ such that $B_{\Dec X}(a,r)\subset U$. 

Being an element of $\Dec X$, the point $a$ is of the form $a=(f,t)$ for some continuous map $f:S_0\to X$ and some $t\in S_0$. If $t\ne0$, then we can find $r>0$ such that $(t-r,t+r)\cap S_0=\{t\}$. For such $r$ the ball $B_{\Dec X}(a,r)$ coincides with the set $\{(g,s)\in\Dec X:g(s)=f(t)\}$ and then $c(B_{\Dec X}(a,r))=\{f(t)\}=\{c(a)\}\subset V$ and hence $B_{\Dec X}(a,r)\subset U$.
If $t=0$, then the sequence $(f(\frac1n))_{n=1}^\infty$ converges to $f(0)=c(a)\in V$. As $V$ is sequentially open, there is $m\in\IN$ such that $f(\frac1n)\in V$ for all $n\ge m$. Then for the radius $r=\frac1m$ we get $B_{\Dec X}(a,r)\subset c_X^{-1}(V)=U$.

Now assume that a set $U\subset\Dec X$ is open. First we show that $U=c_X^{-1}(c_X(U))$.  Given any $a=(f,t)\in U$ find $r>0$ with $B_{\Dec X}(a,r)\subset U$ and observe that $$c^{-1}(c(a))=\{(g,s)\in\Dec X:g(s)=f(t)\}\subset\{b\in\Dec X:d_{\Dec X}(a,b)=0\}\subset B_{\Dec X}(a,r)\subset U$$witnessing that $U=c_X^{-1}(c_X(U))$. It remains to check that the set $c_X(U)$ is sequentially open in $X$. Fix a sequence $(x_n)_{n\in\IN}$ in $X$ that converges to a point $x_0\in c_X(U)$. Then the map $f:S_0\to X$ defined by $f(0)=x_0$ and $f(\frac1n)=x_n$ for $n\in\IN$ is continuous.
Consider the point $a=(f,0)\in\Dec X$ and observe that $c_X(a)=x_0\in c_X(U)$. Since $U$ is open, for  the point $a\in c_X^{-1}(x_0)\subset U$ there is a radius $r>0$ such that $B_{\Dec X}(a,r)\subset U$. Then for every $n>\frac1r$ we get $(f,\frac1n)\in B_{\Dec X}(a,r)\subset U$ and hence $x_n=c_X(f,\frac1n)\in c_X(U)$, witnessing that the set $c_X(U)$ is sequentially open in $X$.
\end{proof}

Sequentially open subsets of $X$ form a topology on $X$. 
The set $X$ endowed with this topology is called the {\em sequential coreflexion} of $X$ and is denoted by $\mathsf s X$.

Since each open subset of $X$ is sequentially open, the identity map $\id:\seq X\to X$ is continuous. This map is a homeomorphism if and only if the space $X$ is sequential. 

Lemma~\ref{l4.1} implies the following important result:

\begin{corollary}\label{c4.2} For every topological space $X$ the map $c_X:\Dec X\to \seq X$ is  surjective, continuous, monotone, pseudo-open, and herediarily quotient. 
\end{corollary}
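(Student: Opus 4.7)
The entire statement will reduce to Lemma~\ref{l4.1}, which gives a concrete description of the topology of $\Dec X$: a set is open in $\Dec X$ iff it is of the form $c_X^{-1}(V)$ for some sequentially open $V\subset X$. I plan to verify the five properties in the listed order, using mostly this characterisation plus one classical equivalence.

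\emph{Surjectivity} is immediate: for each $x\in X$ the constant map $f_x:S_0\to X$ with value $x$ is continuous, and $c_X(f_x,0)=x$. \emph{Continuity} of $c_X:\Dec X\to \seq X$ is nothing but a restatement of Lemma~\ref{l4.1}, since the topology of $\seq X$ is by definition the topology of sequentially open subsets of $X$. For \emph{monotonicity} I will argue that every fibre $c_X^{-1}(x)$ carries the antidiscrete subspace topology, hence is connected (it is nonempty by surjectivity). Indeed, by Lemma~\ref{l4.1} every open $U\subset\Dec X$ has the form $c_X^{-1}(V)$, so the trace $U\cap c_X^{-1}(x)$ is either empty (when $x\notin V$) or the whole fibre (when $x\in V$).

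For \emph{pseudo-openness} I take a point $x\in \seq X$ and an open set $U\subset\Dec X$ containing $c_X^{-1}(x)$. By Lemma~\ref{l4.1}, $U=c_X^{-1}(V)$ for some sequentially open $V\subset X$, and since $c_X^{-1}(x)\ne\emptyset$ lies in $c_X^{-1}(V)$ we get $x\in V$. Surjectivity of $c_X$ then yields $c_X(U)=V$, which is a sequentially open, hence open, neighbourhood of $x$ in $\seq X$. So $x$ is an interior point of $c_X(U)$, as required.

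Finally, the \emph{hereditarily quotient} property will be deduced from the classical characterisation (due to Arhangel'skii) that a continuous surjection between topological spaces is hereditarily quotient if and only if it is pseudo-open; the pseudo-openness just established therefore finishes the proof. The only non-mechanical ingredient is the invocation of this classical equivalence; every other step is a direct translation through Lemma~\ref{l4.1}, so I do not anticipate a genuine obstacle.
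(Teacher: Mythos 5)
Your proposal is correct and follows exactly the route the paper intends: the paper states the corollary with no further argument beyond ``Lemma~\ref{l4.1} implies,'' and your verification of surjectivity (constant sequences), continuity and pseudo-openness (both read off from the characterisation $U=c_X^{-1}(V)$ with $V$ sequentially open), monotonicity (fibres are antidiscrete, hence connected), and the reduction of hereditary quotientness to pseudo-openness via Arhangel'skii's equivalence (the paper's Theorem~\ref{hq=po}) is precisely the intended filling-in of those details.
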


We recall that a surjective map $f:X\to Y$ between topological spaces is
\begin{itemize}
\item {\em monotone} if $f^{-1}(y)$ is connected for every $y\in Y$;
\item {\em quotient} if a set $U\subset Y$ is open if and only if its preimage $f^{-1}(U)$ is open in $X$;
\item {\em hereditarily quotient} if for every subset $A\subset Y$ the restriction $f|f^{-1}(A):f^{-1}(A)\to A$ is a quotient map;
\item {\em pseudo-open} if for every $y\in Y$ and an open set $U\subset X$ containing $f^{-1}(y)$ the point $y$ lies in the interior of $f(U)$ in $Y$.
\end{itemize}

We shall often use the following characterization of hereditarily quoteint maps, due to Arhangel'skii \cite{Ar2}, see also \cite[2.4.F(a)]{En}.

\begin{theorem}\label{hq=po} A map between topological spaces is hereditarily quotient if and only if it is continuous and pseudo-open.
\end{theorem}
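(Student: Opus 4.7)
The plan is to prove both directions by unfolding the definitions, with the key idea being that for the forward direction, the ``bad'' set witnessing failure of pseudo-openness can be upgraded into a subspace where the restriction fails to be quotient, and for the backward direction, pseudo-openness provides, locally at each $y$, exactly the open neighborhood needed to verify the quotient property on the subspace.

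For the forward direction, assume $f:X\to Y$ is hereditarily quotient. Continuity is part of the definition of a quotient map, so it remains to show $f$ is pseudo-open. Given $y\in Y$ and an open $U\subset X$ containing $f^{-1}(y)$, I would form the subspace $A=\{y\}\cup(Y\setminus f(U))$. A quick set-theoretic calculation gives $f^{-1}(A)\cap U=f^{-1}(y)$, which means $f^{-1}(y)$ is open in the subspace $f^{-1}(A)$. Since $f\restriction f^{-1}(A):f^{-1}(A)\to A$ is quotient by hypothesis and $(f\restriction f^{-1}(A))^{-1}(\{y\})=f^{-1}(y)$, the singleton $\{y\}$ must be open in $A$. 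If $y$ were not an interior point of $f(U)$, every neighborhood of $y$ in $Y$ would meet $Y\setminus f(U)\subset A$, contradicting the openness of $\{y\}$ in $A$. Hence $y\in\mathrm{int}_Y(f(U))$.

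For the backward direction, assume $f$ is continuous and pseudo-open. Fix $A\subset Y$; continuity of $f\restriction f^{-1}(A)$ is automatic, so I only need to verify the quotient property. Let $V\subset A$ be such that $(f\restriction f^{-1}(A))^{-1}(V)=f^{-1}(V)\cap f^{-1}(A)$ is open in $f^{-1}(A)$. Choose an open $W\subset X$ with $W\cap f^{-1}(A)=f^{-1}(V)$. For each $y\in V$ we have $f^{-1}(y)\subset f^{-1}(V)\subset W$, so by pseudo-openness $y\in\mathrm{int}_Y(f(W))$. The crucial observation is that $\mathrm{int}_Y(f(W))\cap A\subset V$: indeed, any $z\in f(W)\cap A$ is the image $f(x)$ of some $x\in W$, and then $x\in W\cap f^{-1}(A)=f^{-1}(V)$, giving $z\in V$. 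Therefore $V$ is open in $A$, which is what we needed.

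The main subtlety lies in the forward direction, where the construction $A=\{y\}\cup(Y\setminus f(U))$ must be chosen just right so that the preimage of the singleton $\{y\}$ inside $f^{-1}(A)$ becomes open (this uses $f^{-1}(y)\subset U$ and, more importantly, that points of $U$ whose images lie in $Y\setminus f(U)$ simply do not exist). Everything else is essentially a direct translation between ``images are open'' and ``preimages are open,'' with pseudo-openness serving exactly as the local witness that bridges the two.
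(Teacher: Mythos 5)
Your proof is correct. The paper itself gives no proof of this statement (it is quoted from Arhangel'skii, cf.\ Engelking 2.4.F(a)), and your argument is the standard one: the forward direction via the auxiliary subspace $A=\{y\}\cup(Y\setminus f(U))$ and the backward direction via an open $W\subset X$ with $W\cap f^{-1}(A)=f^{-1}(V)$ both check out, the only implicit ingredient being the surjectivity of $f$ (built into the paper's definitions of quotient and pseudo-open maps), which is what guarantees $y\in f(U)$ in the contradiction step.
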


It is known that each quotient map onto a sequentially Hausdorff Fr\'echet-Urysohn space is hereditarily quotient and pseudo-open, see \cite{Ar2} or \cite[2.4.F(c)]{En}.

Quotient maps are important for us because of the following known fact, see \cite[6.1.28]{En}.

\begin{lemma}\label{l4.4} Let $f:X\to Y$ be a monotone quotient map. The space $X$ is connected if and only if $Y$ is connected.
\end{lemma}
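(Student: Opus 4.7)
The forward direction is immediate: if $X$ is connected, then $Y=f(X)$ is the continuous image of a connected space and hence connected, so no appeal to the monotone or quotient hypotheses is needed here.

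For the backward direction, the plan is to argue by contrapositive. Suppose that $X$ is disconnected, and write $X=U\cup V$ as a partition into two disjoint, non-empty, open sets. The key step is to observe that $U$ and $V$ are \emph{saturated} with respect to $f$: for each $y\in Y$, the fiber $f^{-1}(y)$ is connected by monotonicity, and $\{U\cap f^{-1}(y),\;V\cap f^{-1}(y)\}$ is a partition of this fiber into two relatively open sets, so one of them must be empty. Consequently each fiber lies entirely in $U$ or entirely in $V$, which means $U=f^{-1}(f(U))$ and $V=f^{-1}(f(V))$.

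Now I invoke the quotient hypothesis: since the preimages $f^{-1}(f(U))=U$ and $f^{-1}(f(V))=V$ are open in $X$, the images $f(U)$ and $f(V)$ are open in $Y$. They are non-empty (because $f$ is surjective as a quotient map onto $Y$, hence $f(U)$ and $f(V)$ are non-empty whenever $U$ and $V$ are), disjoint (since the fibers partition cleanly), and cover $Y$. This exhibits a disconnection of $Y$, contradicting the assumption that $Y$ is connected.

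The only slightly delicate point is the saturation observation in the second paragraph; once one recognizes that monotonicity plus a clopen partition of $X$ forces each fiber into one piece, the rest is a direct application of the definition of a quotient map. I do not anticipate any real obstacle here, as the argument is the standard packaging of the two hypotheses.
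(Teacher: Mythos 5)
Your proof is correct. The paper does not actually prove this lemma---it is quoted as a known fact with a reference to Engelking \cite[6.1.28]{En}---and your argument (fibers are connected, so a clopen partition of $X$ cannot split any fiber, hence the pieces are saturated, and the quotient property then pushes the disconnection down to $Y$) is precisely the standard textbook proof that the citation stands for, so there is nothing to flag.
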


This lemma combined with Corollary~\ref{c4.2} implies:

\begin{corollary} The sequence decomposition $\Dec X$ of a topological space $X$ is connected if and only if the sequential coreflexion $\seq X$ of $X$ is connected. In particular, $\Dec X$ is connected for every connected sequential topological space $X$.
\end{corollary}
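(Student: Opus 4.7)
The plan is to read the corollary as an immediate syllogism from Corollary~\ref{c4.2} and Lemma~\ref{l4.4}. By Corollary~\ref{c4.2}, the calculation map $c_X:\Dec X\to\seq X$ is continuous, surjective, monotone, and hereditarily quotient; in particular it is a monotone quotient map. Applying Lemma~\ref{l4.4} to $c_X$ then yields directly that $\Dec X$ is connected if and only if $\seq X$ is connected. This disposes of the first assertion.

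For the ``in particular'' clause, I would appeal to the observation made just before Corollary~\ref{c4.2}: the identity map $\id:\seq X\to X$ is continuous for every topological space $X$, and it is a homeomorphism precisely when $X$ is sequential. Thus if $X$ is a connected sequential space, then $\seq X$ is homeomorphic to $X$ and hence connected, so the equivalence from the first part immediately gives connectedness of $\Dec X$.

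I do not anticipate any genuine obstacle; the two cited results slot together perfectly and the entire content of the statement is packaged in them. The one subtle point worth flagging is that the equivalence must be phrased in terms of $\seq X$ rather than $X$ itself: for non-sequential $X$ the map $\seq X\to X$ is only continuous and not a homeomorphism, so in general connectedness of $X$ is strictly weaker than connectedness of $\seq X$, and the hypothesis of sequentiality in the ``in particular'' clause is exactly what is needed to bridge this gap.
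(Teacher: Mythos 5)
Your argument is exactly the paper's: the corollary is stated there as an immediate consequence of Corollary~\ref{c4.2} (the calculation map $c_X:\Dec X\to\seq X$ is a monotone quotient surjection) together with Lemma~\ref{l4.4}, with the ``in particular'' clause following since $\seq X=X$ for sequential $X$. Nothing is missing; your flagged subtlety about $\seq X$ versus $X$ is the right one and is consistent with the paper's formulation.
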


\section{The complete oriented graph over a set}\label{graph}

The complete oriented graph $\Gamma X$ over a set $X$ is the set
$$\Gamma X=X\cup\{(x,y,t)\in X\times X\times (0,1):x\ne y\}$$ endowed with a special (path) metric. 
The points $x\in X\subset\Gamma X$ are called {\em vertices} of $\Gamma X$ and the sets 
$$(x,y)=\{(x,y,t):t\in(0,1)\}\mbox{ \ \ and \ \ }[x,y]=\{x,y\}\cup(x,y)$$ are called {\em oriented edges} connecting distinct vertices $x,y\in X$. It will be convenient to consider also degenerated edges $(x,x)=\emptyset$ and $[x,x]=\{x\}$ for $x\in X$.

Therefore, $$\Gamma X=\bigcup_{x,y\in X}[x,y]=X\cup\bigcup_{x\ne y}(x,y).$$
Observe that the oriented edges $[x,y]$ and $[y,x]$ meet only by their endpoints: $[x,y]\cap[y,x]=\{x,y\}$. 
\smallskip

The graph $\Gamma X$ is the image of the product $X\times X\times [0,1]$ under the map $\la\cdot\ra:X\times X\times [0,1]\to\Gamma X$ defined by
$$\la\cdot\ra:(x,y,t)\mapsto\la x,y,t\ra=\begin{cases}(x,y,t)&\mbox{if $x\ne y$ and $t\in(0,1)$}\\
x&\mbox{if $x=y$ or $t=0$}\\
y&\mbox{if $x=y$ or $t=1$.}
\end{cases}
$$ In particular, $\la x,y,0\ra=\la x,x,t\ra=\la y,x,1\ra=x$ for every $x,y\in X$ and $t\in[0,1]$.
\smallskip

Now, we define a metric $d_{\Gamma X}$ on $\Gamma X$ such that each edge $[x,y]\subset\Gamma X$, $x\ne y$, is isometric to the unit interval $[0,1]$. For this we put 
$$\tilde d_{\Gamma X}(\la x,y,t\ra,\la x,y,s\ra)=|t-s|$$ for any distinct vertices $x,y\in X$ and $t,s\in[0,1]$. Next, extend $\tilde d_{\Gamma X}$ to a metric $d_{\Gamma X}$ on $\Gamma X$ letting $d_{\Gamma X}(a,b)$ be the minimal value of the sum $\sum_{i=1}^n\tilde d_{\Gamma X}(a_{i-1},a_i)$ where $a=a_0,a_1,\dots,a_n=b$ and for any $i\le n$ the points $a_{i-1},a_i$ lie on some common edge $[x,y]\subset\Gamma X$.

It is easy to check that the so-defined metric $d_{\Gamma X}$ on $\Gamma X$ has the following properties:

\begin{proposition}\label{p5.1}\begin{enumerate}
\item The metric $d_{\Gamma X}$ is complete and $\mathrm{diam}\,\Gamma X\le 2$.
\item Each edge $[x,y]$, $x\ne y$, is isometric to $[0,1]$.
\item Each edge $(x,y)$, $x\ne y$, is open in $\Gamma X$.
\item Any two points $a,b\in\Gamma X$ lying on closed disjoint edges are on the distance $d_{\Gamma X}(a,b)\ge 1$.
\item If $d_{\Gamma X}(a,b)<1$ for some $a,b\in\Gamma X$, then either $a,b$ lie on the same edge or else there is a vertex $x\in X$ such that $d_{\Gamma X}(a,b)=d_{\Gamma X}(a,x)+d_{\Gamma X}(x,b)$.
\end{enumerate}
\end{proposition}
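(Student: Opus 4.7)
The plan is to extract a single structural property of admissible chains in $\Gamma X$ and then derive all five items by case analyses.

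I would begin by fixing the language of \emph{admissible chains}: sequences $a=a_0,a_1,\ldots,a_n=b$ with each consecutive pair $a_{i-1},a_i$ lying on a common edge of $\Gamma X$, so that $d_{\Gamma X}(a,b)$ is the infimum of the sums $\sum_{i=1}^n\tilde d_{\Gamma X}(a_{i-1},a_i)$. The key combinatorial point is that distinct edges of $\Gamma X$ meet only at their vertices in $X$, so every interior point $\la x,y,t\ra$ with $t\in(0,1)$ lies on the unique edge $[x,y]$; hence an admissible chain cannot pass from the interior of one edge to a different edge without first revisiting a vertex of the first edge. Regrouping an admissible chain at its vertex-visits therefore decomposes it into a concatenation of sub-chains, each confined to a single edge on which $\tilde d_{\Gamma X}$ coincides with the Euclidean metric on $[0,1]$. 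From this decomposition I would conclude the fundamental identity $d_{\Gamma X}(x,u)=1$ for any two distinct vertices $x,u\in X$: the edge $[x,u]$ gives $\le 1$, while any admissible chain between the distinct vertices $x$ and $u$ contains a sub-chain that fully traverses some edge between two distinct vertices, contributing at least $1$. In particular $d_{\Gamma X}$ separates points and is a genuine metric.

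Properties (2)--(5) now fall out by controlled case analyses on where an admissible chain enters and leaves the edges under consideration. For (2), an admissible chain between $a=\la x,y,t\ra$ and $b=\la x,y,s\ra$ either remains within $[x,y]$ (contributing at least $|t-s|$) or exits through $x$ (costing at least $t+s$) or through $y$ (costing at least $(1-t)+(1-s)$); all three bounds are $\ge|t-s|$, and equality is realized by the direct sub-chain inside the edge. For (4), if $a\in[x,y]$ and $b\in[u,v]$ with $\{x,y\}\cap\{u,v\}=\emptyset$, any admissible chain from $a$ to $b$ must visit some $p_1\in\{x,y\}$ on its way out of $[x,y]$ and, later, some $p_2\in\{u,v\}$ on its way into $[u,v]$; since $p_1\ne p_2$, the sub-chain between them already has length $\ge d_{\Gamma X}(p_1,p_2)=1$. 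Property~(5) follows from (4): if $d_{\Gamma X}(a,b)<1$ and $a,b$ lie on distinct edges, those edges must share a vertex; an almost-optimal chain can visit at most one vertex (otherwise it would incur two distinct vertex-to-vertex transitions, each of length $\ge 1$), and this unique visited vertex $v$ must lie in the intersection of the two edges, giving $d_{\Gamma X}(a,b)=d_{\Gamma X}(a,v)+d_{\Gamma X}(v,b)$. For (3), at $a=\la x,y,t\ra$ the radius $r=\min(t,1-t)$ works, since by (2) and (4) every point outside $(x,y)$ is at distance at least $r$ from $a$.

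Finally, (1) follows from the preceding. The diameter bound is immediate: each point is within distance $1/2$ of some vertex of its edge, and distinct vertices are at distance $1$, so $d_{\Gamma X}(a,b)\le \tfrac12+1+\tfrac12=2$. For completeness, let $(a_n)$ be Cauchy and fix $N$ with $d_{\Gamma X}(a_m,a_n)<\tfrac13$ for $m,n\ge N$. By (4), all such $a_n$ lie on closed edges sharing a vertex with the edge of $a_N$. Either some closed edge $[x,y]$ contains cofinally many terms of the sequence, in which case completeness of $[0,1]$ via (2) produces the limit, or the terms split cofinally among edges meeting at a common vertex $v$, in which case (5) yields $d_{\Gamma X}(a_n,v)\to 0$ and hence $a_n\to v$ in $\Gamma X$. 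The main obstacle is the structural regrouping lemma of the first paragraph: the careful verification that admissible chains cannot shortcut across different edges, so that after regrouping they reduce cleanly to concatenations of single-edge segments. Once that is laid down, the remaining arguments reduce to straightforward comparisons of the three or four possible route types.
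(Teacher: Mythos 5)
Your argument is correct; note that the paper offers no proof of this proposition at all (it is introduced with ``it is easy to check''), so there is nothing to compare against, and your chain-regrouping lemma --- between consecutive vertex-visits an admissible chain stays on a single closed edge, so every chain decomposes into single-edge segments separated by vertices, whence $d_{\Gamma X}(x,u)=1$ for distinct vertices --- is exactly the verification the paper leaves to the reader, and items (1)--(5) do follow from it by the case analyses you describe. Two small points to tidy up if you write this out in full. In item (3), a point lying on an edge \emph{adjacent} to $[x,y]$ (sharing a vertex with it) is not covered by (2) and (4) alone; you need (5), or the regrouping lemma directly, to see that any chain from $a=\la x,y,t\ra$ to a point outside $(x,y)$ must first reach $x$ or $y$, at cost at least $\min(t,1-t)$. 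In the completeness argument, the existence of a \emph{single} common vertex $v$ in your second alternative is not automatic from the pairwise statement in (5) (pairwise intersecting edges need not share one vertex); it follows by observing that, when no closed edge contains infinitely many terms, the distance $r_n$ from $a_n$ to the nearest endpoint of its edge tends to $0$ (pair $a_n$ with a later term on a different edge and apply (5)), and the resulting sequence of nearest endpoints is eventually constant because distinct vertices lie at distance $1$. Both repairs are routine and do not change the structure of your proof.
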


The construction of the complete oriented graph determines a functor $$\Gamma:\Set\to\Metr$$ from the category $\Set$ of sets and their maps to the category $\Metr$ of metric spaces and their non-expanding maps.

In fact, each map $f:X\to Y$ between sets determines a map $\Gamma f:\Gamma X\to\Gamma Y$ defined by
$$\Gamma f:\la x,z,t\ra\mapsto\la f(x),f(z),t\ra.$$

It follows from the definition of the metrics on the graphs $\Gamma X$ and $\Gamma Y$ that the map $\Gamma f:\Gamma X\to\Gamma Y$ is non-expanding in the sense that $d_{\Gamma Y}(\Gamma f(a),\Gamma f(b))\le d_{\Gamma X}(a,b)$ for any $a,b\in\Gamma X$. Also it is clear that the identity map $\mathrm{id}:X\to X$ induces the identity map $\Gamma\,\mathrm{id}:\Gamma X\to\Gamma X$ and for any maps $f:X\to Y$ and $g:Y\to Z$ the composition $\Gamma g\circ\Gamma f$ equals to map $\Gamma(g\circ f)$. This means that $\Gamma:\Set\to\Metr$ is a functor.

\begin{proposition}\label{p5.2} For every injective map $f:X\to Y$ between sets the induced map $\Gamma f:\Gamma X\to\Gamma Y$ is an isometric embedding.
\end{proposition}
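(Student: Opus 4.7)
The plan is to exploit the structure of $\Gamma X$ and $\Gamma Y$ as complete graphs with unit-length edges and to show that injectivity of $f$ rules out any shortcuts. Since the excerpt already records that $\Gamma f$ is non-expanding, it suffices to verify the reverse inequality $d_{\Gamma X}(a,b)\le d_{\Gamma Y}(\Gamma f(a),\Gamma f(b))$ for all $a,b\in\Gamma X$.

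I would first list the purely combinatorial consequences of injectivity: (i) $\Gamma f$ is injective on vertices; (ii) each oriented edge $[x,z]$ with $x\ne z$ is sent isometrically onto $[f(x),f(z)]$; (iii) distinct oriented edges of $\Gamma X$ go to distinct oriented edges of $\Gamma Y$. From these I would extract the auxiliary lemma that $\Gamma f(a)$ and $\Gamma f(b)$ lie on a common oriented edge of $\Gamma Y$ if and only if $a,b$ lie on a common oriented edge of $\Gamma X$; the nontrivial direction uses that an interior point of an edge in $\Gamma Y$ determines that edge uniquely, combined with injectivity of $f$.

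The main step is a ``minimal detour'' formula. Let $V(p)=\{p\}$ if $p$ is a vertex, and $V(p)=\{x,z\}$ if $p$ is interior to $(x,z)$; let $\ell(p,v)$ denote the within-edge distance from $p$ to $v\in V(p)$; and let $\delta(u,v)\in\{0,1\}$ be the distance between vertices $u,v$ in the complete graph. When $a,b$ lie on a common oriented edge, Proposition~\ref{p5.1}(2) together with a routine check that the two endpoint detours are no shorter yields $d_{\Gamma X}(a,b)=|t_a-t_b|$. When $a,b$ do not lie on a common edge, I claim
$$
d_{\Gamma X}(a,b)=\min\bigl\{\ell(a,v_a)+\delta(v_a,v_b)+\ell(v_b,b)\;:\;v_a\in V(a),\;v_b\in V(b)\bigr\}.
$$
The upper bound is produced by explicit paths routed through a choice of vertices; for the lower bound, any path from $a$ to $b$ must exit $a$'s edge at some $v_a\in V(a)$ and enter $b$'s edge at some $v_b\in V(b)$, and between distinct vertices the minimum path length is $1=\delta$ (the direct edge realizes it, by Proposition~\ref{p5.1}(4),(5)).

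With the two formulas in hand, the conclusion is bookkeeping: $V(\Gamma f(a))=f(V(a))$; the function $\ell$ is preserved on each edge by (ii); and $\delta(f(u),f(v))=\delta(u,v)$ because $f$ is injective. Hence the minimum computed in $\Gamma X$ agrees with the minimum computed in $\Gamma Y$, and similarly in the common-edge case. The delicate point I expect to spend most care on is the lower bound in the detour formula: one must rule out that an optimal path in $\Gamma Y$ between $\Gamma f(a)$ and $\Gamma f(b)$ quietly takes a shortcut through vertices lying in $Y\setminus f(X)$. This is precisely where injectivity of $f$ is essential, because any such excursion would cost at least one further unit-length edge, whereas the direct edge between two vertices of $f(X)$ already realizes the minimal vertex-to-vertex cost of $0$ or $1$.
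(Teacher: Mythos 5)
Your plan is correct, but it takes a genuinely different and much heavier route than the paper's. The paper computes no distances at all: since $f$ is injective it admits a left inverse $g:Y\to X$ with $g\circ f=\id_X$ (the case $X=\emptyset$ being trivial), and functoriality gives $\Gamma g\circ \Gamma f=\Gamma(g\circ f)=\id_{\Gamma X}$; as both $\Gamma f$ and $\Gamma g$ are non-expanding, one gets the sandwich $d_{\Gamma X}(a,b)=d_{\Gamma X}\big(\Gamma g(\Gamma f(a)),\Gamma g(\Gamma f(b))\big)\le d_{\Gamma Y}\big(\Gamma f(a),\Gamma f(b)\big)\le d_{\Gamma X}(a,b)$, forcing equality. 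That two-line argument uses only facts already recorded (that $\Gamma$ is a functor into non-expanding maps) and would work verbatim for any such functor on $\Set$, since injections are split monomorphisms. Your route instead establishes an explicit formula for the path metric (the common-edge case plus the minimal-detour formula) and checks that injectivity preserves all its ingredients: edges stay nondegenerate and map isometrically, $V(\Gamma f(a))=f(V(a))$, and $\delta(f(u),f(v))=\delta(u,v)$. This is sound, but note that the lower bound you rightly flag as the delicate step amounts to re-deriving Proposition~\ref{p5.1}(2),(4),(5) from the chain definition of $d_{\Gamma X}$, i.e.\ precisely the material the paper disposes of with ``it is easy to check''; so your proof is longer and concentrates the risk of error in that verification. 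What it buys in exchange is more information: an explicit description of the metric on $\Gamma X$ and a transparent picture of why non-injective maps can shrink distances (collapsed edges, vertex distances dropping from $1$ to $0$), which the categorical sandwich hides completely.
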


\begin{proof} The map $f$, being injective, admits a left inverse map $g:Y\to X$ such that $g\circ f=\id_X$. By the functoriality of $\Gamma$, the maps $\Gamma f:\Gamma X\to\Gamma Y$ and $\Gamma g:\Gamma Y\to\Gamma X$ are non-expanding and their composition $\Gamma g\circ\Gamma f=\Gamma (g\circ f)=\Gamma\,\id_X=\id_{\Gamma X}$. Then for every $a,b\in\Gamma X$ the non-expanding property of the maps $\Gamma f$ and $\Gamma g$ yields:
$$d_{\Gamma X}(a,b)=d_{\Gamma X}(\Gamma g(\Gamma f(a)),\Gamma g(\Gamma f(b))\le d_{\Gamma Y}(\Gamma f(a),\Gamma f(b))\le d_{\Gamma X}(a,b),$$
which implies that $d_{\Gamma Y}(\Gamma f(a),\Gamma f(b))=d_{\Gamma X}(a,b)$. This witnesses that $\Gamma f:\Gamma X\to\Gamma Y$ is an isometric embedding.
\end{proof}

\section{The cobweb construction over a premetric space}\label{cobweb}

In this section we define the cobweb functor $\Cob:\Dist\to\Metr$. 

By the {\em cobweb} of a premetric space $X$ we understand the following closed subspace $\Cob X$ of the complete oriented graph $\Gamma X$:
$$\Cob X=\{\la x,y,t\ra\in\Gamma X:t\le 1-\bar d_X(y,x)\}\big\}\subset\Gamma X,$$
where $\bar d_X=\min\{1,d_X\}$ and $d_X$ is the premetric of $X$.

Let us explain this construction in more details. For any distinct points $x,y\in X$ consider the unique point $x_y$ on the edge $[x,y]\subset\Gamma X$ that lies on the distance $d_{\Gamma X}(x_y,y)=\bar d_X(y,x)=\min\{1,d_X(y,x)\}$ from the vertex $y$. It follows that the set $$[x,x_y]=\{\la x,y,t\ra\in\Gamma X:t\le d_{\Gamma X}(x,x_y)=1-\bar d_X(y,x)\}$$ is a subarc of length $1-\bar d_X(y,x)$ in $[x,y]\subset\Gamma X$. If $d_X(y,x)\ge1$, then the subarc $[x,x_y]$ degenerates to the singleton $\{x\}$.

The union $$S_x=\bigcup_{y\ne x}[x,x_y]\setminus\{y\}$$ will be called the {\em spider centered at} a point $x\in X\subset \Cob X\subset\Gamma X$. Each spider $S_x$ is an arcwise connected subspace of $\Gamma X$.
Therefore $$\Cob X=\bigcup_{x\ne y}[x,x_y]=\bigcup_{x\in X}S_x$$ is the union of all such spiders.

The cobweb $\Cob X$ contains $X$ as the discrete set of vertices. Consequently,
$$|X|\le\dens (\Cob X)\le|\Cob X\,|\le\mathfrak c\cdot|X|.$$
The reader should not be confused by the fact that the space $X$ that has the premetric topology will be also considered as the discrete subspace of the cobweb $\Cob X$ over $X$.

Since $\Cob X$ can be obtained from $\Gamma X$ by deleting selected open arcs from the edges of $\Gamma X$, the subspace $\Cob X$ is closed in $\Gamma X$ and hence is complete with respect to the metric $d_{\Cob X}=d_{\Gamma X}|\Cob X\times\Cob X$ induced from the graph $\Gamma X$.
\smallskip

Next, we show that the construction of the cobweb space determines a functor $$\Cob:\Dist\to\Metr$$ from the category $\PMetr$ of premetric spaces and their non-expanding maps to the category $\Metr$ of metric spaces.

Given any non-expanding map $f:X\to Y$ between premetric spaces, consider the induced non-expanding map $\Gamma f:\Gamma X\to\Gamma Y$ between the complete oriented graphs. 

\begin{lemma}\label{l6.1} $\Gamma f(\Cob X)\subset\Cob Y$. 
\end{lemma}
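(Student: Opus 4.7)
The plan is to pick an arbitrary point $a \in \Cob X$, write it in the form $a = \la x,y,t\ra$ with $t \in [0,1]$, apply $\Gamma f$ to obtain $\Gamma f(a) = \la f(x),f(y),t\ra$ using the defining formula from Section~\ref{graph}, and verify the cobweb inequality $t \le 1 - \bar d_Y(f(y),f(x))$. The whole argument will rest on the single inequality $\bar d_Y(f(y),f(x)) \le \bar d_X(y,x)$, which is immediate from $f$ being non-expanding (both sides are clipped at $1$, so the clipping is compatible with the inequality).

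First I would treat the generic case, when $x \ne y$ and $t \in (0,1)$ and in addition $f(x) \ne f(y)$. Then the membership $a \in \Cob X$ reads $t \le 1 - \bar d_X(y,x)$, and since $d_Y(f(y),f(x)) \le d_X(y,x)$ gives $\bar d_Y(f(y),f(x)) \le \bar d_X(y,x)$, I get
$$t \;\le\; 1 - \bar d_X(y,x) \;\le\; 1 - \bar d_Y(f(y),f(x)),$$
which is exactly the condition $\Gamma f(a) = \la f(x),f(y),t\ra \in \Cob Y$.

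The remaining cases reduce to showing that the image is a vertex of $\Gamma Y$, in which case it lies automatically in $\Cob Y$ (every vertex $v \in Y$ equals $\la v,z,0\ra$ which trivially satisfies $0 \le 1 - \bar d_Y(z,v)$). Indeed, if $x = y$, or $t \in \{0,1\}$, or $f(x) = f(y)$, then by the formula $\la\cdot\ra$ collapsing its degenerate inputs, $\Gamma f(a)$ is one of the vertices $f(x)$ or $f(y)$.

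I do not anticipate a real obstacle here; the only thing worth being careful about is the bookkeeping of the degenerate bracket notation (so that one does not claim $\la f(x),f(y),t\ra$ is a ``true'' interior edge point when in fact $f$ identifies $x$ with $y$). Writing the proof with an explicit case split on whether $f(x) = f(y)$ cleanly sidesteps this.
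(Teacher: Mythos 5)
Your proposal is correct and follows essentially the same argument as the paper: the same key inequality $\bar d_Y(f(y),f(x))\le\bar d_X(y,x)$ from non-expansion handles the generic edge point, while degenerate cases (including $f(x)=f(y)$) are dismissed because the image is then a vertex of $\Gamma Y$, and all vertices lie in $\Cob Y$. Your explicit case split on $f(x)=f(y)$ is just a slightly more detailed bookkeeping of the paper's ``if $f(a)\in Y$ we are done'' step.
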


\begin{proof}  Given any point $a\in\Cob X\subset\Gamma X$, find vertices $x,y\in X$ such that $a\in[x,x_y]$. It follows that $d_{\Gamma X}(x,a)\le d_{\Gamma X}(x,x_y)=1-\bar d_X(y,x)$ where $\bar d_X=\min\{1,d_X\}$. Now consider the image $\Gamma f(a)\subset\Gamma Y$. If $f(a)\in Y$, then we are done because $Y\subset\Cob Y\subset\Gamma Y$. So, we assume that $f(a)\in\Gamma Y\setminus Y$. In this case $f(a)=(f(x),f(y),t)$ where $a=(x,y,t)$ and $t\le 1-\bar d_X(y,x)$. Since the map $f$ is non-expanding, $\bar d_Y(f(y),f(x))\le \bar d_X(y,x)$ and consequently, $t\le 1-\bar d_X(y,x)\le 1-\bar d_Y(f(y),f(x))$. Now the definition of $\Cob Y$ guarantees that $f(a)=(f(x),f(y),t)\in [f(x),f(x)_{f(y)}]\subset\Cob Y$.
\end{proof}

Lemma~\ref{l6.1} allows us to define a map $\Cob f:\Cob X\to\Cob Y$ as the restriction $\Cob f=\Gamma f|\Cob X$. In such a way we have proved 

\begin{theorem} $\Cob:\Dist\to\Metr$ is a functor.
\end{theorem}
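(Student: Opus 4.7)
The plan is to verify the three functor axioms by piggybacking on the already-established fact that $\Gamma\colon\Set\to\Metr$ is a functor (from Section~\ref{graph}) and on Lemma~\ref{l6.1}. First I would note that on objects the assignment is already legitimate: for any premetric space $X$, the set $\Cob X$ is a closed subset of the metric space $(\Gamma X,d_{\Gamma X})$, and $d_{\Cob X}=d_{\Gamma X}|\Cob X\times\Cob X$ is therefore a genuine metric making $\Cob X$ an object of $\Metr$ (indeed a complete metric space, by Proposition~\ref{p5.1}(1)).

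Next I would verify that the assignment on morphisms lands in $\Metr$. Given a non-expanding map $f\colon X\to Y$ between premetric spaces, Lemma~\ref{l6.1} establishes the inclusion $\Gamma f(\Cob X)\subset\Cob Y$, so the restriction $\Cob f:=\Gamma f|\Cob X\colon\Cob X\to\Cob Y$ is well-defined. Its non-expanding property is inherited directly from $\Gamma f$: for any $a,b\in\Cob X\subset\Gamma X$,
$$d_{\Cob Y}(\Cob f(a),\Cob f(b))=d_{\Gamma Y}(\Gamma f(a),\Gamma f(b))\le d_{\Gamma X}(a,b)=d_{\Cob X}(a,b),$$
where the middle inequality is the non-expanding property of $\Gamma f$ established in Section~\ref{graph}.

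Finally I would check preservation of identities and compositions. For the identity $\id_X\colon X\to X$, the functoriality of $\Gamma$ gives $\Gamma\id_X=\id_{\Gamma X}$, whose restriction to $\Cob X$ is plainly $\id_{\Cob X}$, so $\Cob\id_X=\id_{\Cob X}$. For non-expanding maps $f\colon X\to Y$ and $g\colon Y\to Z$, functoriality of $\Gamma$ yields $\Gamma(g\circ f)=\Gamma g\circ\Gamma f$ on all of $\Gamma X$; restricting both sides to $\Cob X$ and using Lemma~\ref{l6.1} to justify that $\Gamma f$ sends $\Cob X$ into $\Cob Y$ (so the restricted composition $\Cob g\circ\Cob f$ makes sense), we conclude $\Cob(g\circ f)=\Cob g\circ\Cob f$.

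There is really no obstacle here: all the substantive work has already been done in Lemma~\ref{l6.1} (the set-theoretic containment) and in Section~\ref{graph} (functoriality of $\Gamma$ and the non-expanding property of $\Gamma f$). The proof is essentially the observation that these two ingredients combine to make the restriction operation a functor.
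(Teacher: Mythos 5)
Your proof is correct and follows essentially the same route as the paper: the paper proves Lemma~\ref{l6.1} and then immediately declares the theorem proved, since (as you spell out explicitly) the non-expanding property, preservation of identities, and preservation of composition are all inherited by restriction from the functor $\Gamma:\Set\to\Metr$ of Section~\ref{graph}. You have merely written out the routine verifications the paper leaves implicit.
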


Proposition~\ref{p5.2} implies that this functor preserves isometric embeddings. 

\begin{proposition} For any injective non-expanding map $f:X\to Y$ between premetric spaces the induced map $\Cob f:\Cob X\to\Cob Y$ is an isometric embedding.
\end{proposition}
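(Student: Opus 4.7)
The plan is to reduce this directly to Proposition~\ref{p5.2} applied to $f$ viewed as a map of sets, combined with Lemma~\ref{l6.1}. The two ingredients do all the work: Lemma~\ref{l6.1} makes the map well defined, and Proposition~\ref{p5.2} gives the isometry on the ambient graphs for free.

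First I would observe that since $f$ is non-expanding, Lemma~\ref{l6.1} guarantees $\Gamma f(\Cob X)\subset \Cob Y$, so the restriction $\Cob f=\Gamma f|\Cob X$ is a well-defined map from $\Cob X$ to $\Cob Y$. Next, since $f$ is injective as a map of underlying sets, Proposition~\ref{p5.2} tells us that $\Gamma f:\Gamma X\to\Gamma Y$ is an isometric embedding, i.e.
$$d_{\Gamma Y}(\Gamma f(a),\Gamma f(b))=d_{\Gamma X}(a,b)\quad\text{for all }a,b\in\Gamma X.$$

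Finally I would invoke the fact that by construction the cobweb metrics are inherited from the graph metrics, $d_{\Cob X}=d_{\Gamma X}|\Cob X\times\Cob X$ and $d_{\Cob Y}=d_{\Gamma Y}|\Cob Y\times\Cob Y$, while $\Cob f$ is itself the restriction of $\Gamma f$. Thus for any $a,b\in\Cob X$,
$$d_{\Cob Y}(\Cob f(a),\Cob f(b))=d_{\Gamma Y}(\Gamma f(a),\Gamma f(b))=d_{\Gamma X}(a,b)=d_{\Cob X}(a,b),$$
which is precisely the statement that $\Cob f$ is an isometric embedding.

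I do not anticipate a genuine obstacle: the proof is essentially the observation that ``isometric embedding'' is preserved under passing to subspaces on both sides, provided the map restricts correctly. The role of injectivity is to invoke Proposition~\ref{p5.2}; the role of the non-expanding hypothesis is to guarantee (via Lemma~\ref{l6.1}) that $\Gamma f$ actually carries $\Cob X$ into $\Cob Y$ rather than into the deleted open arcs of $\Gamma Y$.
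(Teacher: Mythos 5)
Your proposal is correct and is essentially the paper's own argument: the paper gives no separate proof, merely noting that Proposition~\ref{p5.2} implies the functor $\Cob$ preserves isometric embeddings, since $d_{\Cob X}=d_{\Gamma X}|\Cob X\times\Cob X$ and $\Cob f=\Gamma f|\Cob X$ with $\Gamma f(\Cob X)\subset\Cob Y$ by Lemma~\ref{l6.1}. Your write-up just makes these restriction steps explicit.
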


\section{The compression map}\label{compression}

In this section, given a premetric space $X$ we construct an important map $\pi_X:\Cob X\to X$ called the {\em compression map}. It is defined by the formula:
$$\pi_X(a)=\begin{cases}
a&\mbox{if $a\in X\subset\Cob X$};\\
x&\mbox{if $a=(x,y,t)\in\Cob X\setminus X$}.
\end{cases}
$$
Observe that for every $x\in X$ the preimage $$\pi_X^{-1}(x)=S_x=\bigcup_{X\ni y\ne x}[x,x_y]\setminus\{y\}$$ coincides with the spider $S_x$ centered at $x$.

The compression map $\pi_X$ is natural in the sense that for any non-expanding map $f:X\to Y$ between premetric spaces the following diagram commutes:
$$\begin{CD}
\Cob X@>{\pi_X}>> X\\
@V{\Cob f}VV@ VV{f}V\\
\Cob Y@>>{\pi_Y}> Y
\end{CD}
$$

In the following proposition we collect some simple properties of the compression map.

\begin{proposition}\label{p7.1}  Let $X$ be a premetric space. Then:\begin{enumerate}
\item $\pi_X(x)=x$ for all $x\in X$;
\item $\pi_X:\Cob X\to X$ is a monotone surjection;
\item $\pi_X$ is locally constant on the open subset $\Cob X\setminus X$ of $\Cob X$;
\item for any subset $A\subset\Cob X$ we get $|\pi_X(A)|\le\dens(A)$.
\end{enumerate}
\end{proposition}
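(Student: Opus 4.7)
The plan is to dispatch (1)--(3) directly from the definitions and, for (4), to split $\pi_X(A)$ into vertex and non-vertex contributions.

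Part (1) is immediate from the defining formula, which restricts to the identity on $X \subset \Cob X$. For (2), surjectivity follows from (1); for monotonicity I will verify that $\pi_X^{-1}(x) = S_x$ and note that each subarc $[x,x_y]\setminus\{y\}$ comprising $S_x$ contains the vertex $x$, so $S_x$ is arcwise connected. For (3), a point $a \in \Cob X \setminus X$ lies in the interior $(x,y)$ of a unique oriented edge of $\Gamma X$; by Proposition~\ref{p5.1}(3) the set $(x,y)$ is open in $\Gamma X$, hence $(x,y)\cap\Cob X$ is a relatively open neighborhood of $a$ in $\Cob X$ on which $\pi_X$ is constantly $x$. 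This simultaneously shows that $\Cob X \setminus X$ is open in $\Cob X$ and that $\pi_X$ is locally constant on it.

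For (4), I will fix a dense $D \subset A$ with $|D| = \dens(A)$. For each $a \in A \setminus X$, part (3) yields a relatively open neighborhood of $a$ in $A$ on which $\pi_X$ is constantly $\pi_X(a)$; density forces this neighborhood to meet $D$, so $\pi_X(a) \in \pi_X(D)$. Hence $\pi_X(A\setminus X) \subset \pi_X(D)$, giving $|\pi_X(A\setminus X)| \le \dens(A)$. To bound $|A \cap X|$, I will use the geometric fact that in the path metric on $\Gamma X$ distinct vertices lie at distance exactly $1$ (the connecting edge furnishes the upper bound, and any alternative route passes through a third vertex and has length at least $2$), so $A \cap X$ is uniformly discrete in $\Cob X$. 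A standard argument (refine $D$ by picking, for each $d \in D$ and each $n \in \N$, a point of $A \cap X$ within distance $1/n$ of $d$ whenever such a point exists) shows $\dens(B) \le \dens(A)$ for any subspace $B$ of a metric space, hence $|A \cap X| = \dens(A \cap X) \le \dens(A)$. Combining the two estimates gives $|\pi_X(A)| \le \dens(A)$ when $\dens(A)$ is infinite; the finite case is trivial.

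The main obstacle is exactly what forces me to split off $A \cap X$ in (4). The density argument for $A \setminus X$ does not extend to a vertex $x \in A$, because $S_x = \pi_X^{-1}(x)$ is not a neighborhood of $x$ in $\Cob X$: arbitrarily small metric balls around $x$ intrude into the incoming edges $(y,x)\cap\Cob X$ for vertices $y$ with $d_X(x,y)$ small, and such points belong to other spiders $S_y$. Consequently, points of $A$ approaching $x$ can be carried by $\pi_X$ entirely outside $S_x$, so density of $D$ alone does not force $x \in \pi_X(D)$. The uniform discreteness of the vertex set in $\Cob X$, a structural fact independent of the premetric on $X$, is what lets me recover the desired bound.
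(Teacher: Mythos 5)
Your proof is correct and follows essentially the same route as the paper: items (1)--(3) from the definitions and the openness of the edges, and for (4) the same decomposition of $A$ into $A\cap X$ (handled via the discreteness of the vertex set in $\Cob X$ and monotonicity of density in metric spaces) and $A\setminus X$ (handled via local constancy of $\pi_X$). You merely spell out the details that the paper's proof leaves implicit.
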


\begin{proof} 1,3. The first and third items follow immediately from the definition of $\pi_X$.
\smallskip

2. Since each spider $S_x=\pi_X^{-1}(x)$, $x\in X$, is not empty and (arcwise) connected, the map $\pi_X$ is a monotone surjection.
\smallskip

4. Let $A\subset \Cob X$. If $A$ is finite, then 
$|\pi_X(A)|\le|A|=\dens(A)$. So, we assume that $A$ is infinite.

Since $X$ is discrete in $\Cob X$, the intersection $A\cap X$ has cardinality $|A\cap X|\le \dens(A)$. Since $\pi_X$ is locally constant on the set $\Cob X\setminus X$, the image $\pi_X(A\setminus X)$ has cardinality $|\pi_X(A\setminus X)|\le\dens(A)$. Combining those two facts, we get 
$$|\pi_X(A)|\le|\pi_X(A\cap X)|+|\pi_X(A\setminus X)|\le \dens(A)+\dens(A)=\dens(A).$$
\end{proof}

Next, we establish some metric properties of the compression map. 

\begin{proposition}\label{p7.2} Let $X$ be a premetric space, $d_X$ be the premetric of $X$ and $\bar d_X=\min\{1,d_X\}$. Then 
\begin{enumerate}
\item $\bar d_X(x,\pi_X(a))\le d_{\Cob X}(x,a)$ for every $x\in X$ and $a\in\Cob X$;
\item $\pi(B_{\Cob X}(x,r))=B_X(x,r)$ for any $x\in X$ and $r\in(0,1]$;
\item $d_{\Cob X}(a,b)\ge\inf_{x\in X}\bar d_X(x,\pi_X(a))+\bar d_X(x,\pi_X(b))$ for any $a,b\in\Cob X$;
\item the compression map $\pi_X:\Cob X\to (X)$ is non-expanding if and only if $\bar d_X$ is a pseudometric.
\end{enumerate}
\end{proposition}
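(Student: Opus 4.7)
The plan is to establish the four items in order, each building on the previous ones.

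For (1), split on whether $d:=d_{\Cob X}(x,a)<1$. If $d\ge 1$, then $\bar d_X(x,\pi_X(a))\le 1\le d$ is automatic. If $d<1$, invoke Proposition~\ref{p5.1}(5): either $x,a$ share an edge, or $d=d_{\Gamma X}(x,v)+d_{\Gamma X}(v,a)$ for some vertex $v\in X$. In the second case, since any two distinct vertices of $\Gamma X$ are at distance exactly $1$, the condition $d_{\Gamma X}(x,v)\le d<1$ forces $v=x$ and the decomposition is trivial; so $a$ lies on an edge with $x$ as endpoint. If $a\in[x,y]$ then $\pi_X(a)=x$ and the bound is automatic (the case $a=y$ is excluded by $d<1$). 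If $a\in[y,x]$ at parameter $t$, then $\pi_X(a)=y$ and $d=1-t$, while the cobweb constraint $t\le 1-\bar d_X(x,y)$ yields $\bar d_X(x,y)\le 1-t=d$. Item (2) follows quickly: for $\subseteq$, $a\in B_{\Cob X}(x,r)$ gives $\bar d_X(x,\pi_X(a))<r\le 1$ by (1), hence $d_X(x,\pi_X(a))=\bar d_X(x,\pi_X(a))<r$; for $\supseteq$, given $y\in B_X(x,r)$ take $a=\la y,x,t\ra$ with $1-r<t\le 1-\bar d_X(x,y)$, a nonempty range since $\bar d_X(x,y)=d_X(x,y)<r$.

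For (3), picking $x=\pi_X(a)$ in the infimum already bounds the right-hand side by $1$, so only $d_{\Cob X}(a,b)<1$ is nontrivial. Apply Proposition~\ref{p5.1}(5) once more: if $d_{\Cob X}(a,b)=d_{\Gamma X}(a,v)+d_{\Gamma X}(v,b)$ for a vertex $v\in X$, two applications of (1) give $\bar d_X(v,\pi_X(a))+\bar d_X(v,\pi_X(b))\le d_{\Cob X}(a,b)$, attaining the infimum at $x=v$. If $a,b$ instead lie on a common edge $[z,w]$, then $\pi_X(a),\pi_X(b)\in\{z,w\}$, and at most one of $a,b$ can equal $w$, in which case the cobweb constraint forces $\bar d_X(w,z)=0$; a suitable choice of $x\in\{z,w\}$ makes the right-hand side vanish.

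For (4), interpret non-expansion as $\bar d_X(\pi_X(a),\pi_X(b))\le d_{\Cob X}(a,b)$. If $\bar d_X$ is a pseudometric, its triangle inequality and symmetry combined with (3) immediately give non-expansion. Conversely, assuming non-expansion, establish symmetry by taking $a=x_y$ and $b=y$: then $\pi_X(a)=x$, $\pi_X(b)=y$, and $d_{\Cob X}(a,b)=\bar d_X(y,x)$, so non-expansion yields $\bar d_X(x,y)\le\bar d_X(y,x)$; the reverse is symmetric. The triangle inequality is established with $a=x_y$, $b=z_y$: the path $a\to y\to b$ in $\Gamma X$ has length $\bar d_X(y,x)+\bar d_X(y,z)$, so non-expansion and the already-proved symmetry give $\bar d_X(x,z)\le\bar d_X(x,y)+\bar d_X(y,z)$. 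The main obstacle is precisely the degenerate situations in this converse where some $\bar d_X(y,x)$ vanishes, forcing $x_y=y$ so that the direct substitution becomes unusable; these are handled by approximating $y$ with $\la x,y,t\ra$ for $t\to 1$, a limit that remains inside $\Cob X$ precisely because $\bar d_X(y,x)=0$ keeps the entire edge $[x,y]$ in the cobweb.
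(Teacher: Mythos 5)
Your overall route coincides with the paper's: a case analysis via Proposition~\ref{p5.1} for (1), an explicit point on the edge $[y,x]$ for the reverse inclusion in (2), a vertex decomposition followed by two applications of (1) for (3), and the points $x_y,z_y$ together with an $\e$-approximation in the degenerate cases for (4). However, two steps do not hold up as written. In item (1) you conclude from Proposition~\ref{p5.1}(5) that ``$a$ lies on an edge with $x$ as endpoint'' because the vertex in the decomposition is forced to be $x$ itself. That is a non sequitur: the identity $d_{\Gamma X}(x,a)=d_{\Gamma X}(x,x)+d_{\Gamma X}(x,a)$ holds vacuously for \emph{every} $a$, so the second alternative of \ref{p5.1}(5) is always satisfied and you cannot infer the first. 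What you need here is Proposition~\ref{p5.1}(4): if $a$ lay on a closed edge disjoint from the degenerate edge $[x,x]=\{x\}$, then $d_{\Gamma X}(x,a)\ge 1$; hence $d<1$ forces $a\in[x,y]\cup[y,x]$ for some $y$ (this is exactly how the paper argues), after which your two subcases are correct.

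In item (3) the common-edge branch contains a false claim: if, say, $b=w$ is the far vertex of the edge $[z,w]$, the ``cobweb constraint'' does \emph{not} force $\bar d_X(w,z)=0$, because $w$ belongs to $\Cob X$ simply as a vertex ($X\subset\Cob X$), regardless of $\bar d_X(w,z)$; correspondingly the right-hand side need not vanish there (with $x=w$ it equals $\bar d_X(w,z)$, which is still $\le d_{\Cob X}(a,w)$ since $a\in[z,z_w]$). The slip is harmless only because that subcase is already covered by your vertex branch: when one of the two points is the vertex $w$, the decomposition $d_{\Cob X}(a,b)=d_{\Cob X}(a,w)+d_{\Cob X}(w,b)$ holds trivially with $v=w$. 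The common-edge branch is genuinely needed only when both points are interior to the edge, and there $\pi_X(a)=\pi_X(b)=z$, so $x=z$ does annihilate the right-hand side. A smaller remark on (2): when $\bar d_X(x,y)=0$ and $y\ne x$ you must additionally take $t<1$, since $t=1$ gives $a=x$ and $\pi_X(a)=x\ne y$; any $t\in(1-r,1)$ works. With these repairs the argument is complete and essentially identical to the paper's.
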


\begin{proof} 1. Fix any $x\in X$ and $a\in \Cob X$. We need to check that  $\bar d_X(x,y)\le d_{\Cob X}(x,a)$ where $y=\pi_X(a)$. This inequality is trivial if $d_{\Cob X}(x,a)\ge 1$. So, we assume that $d_{\Cob X}(x,a)<1$.

Since $a\in\pi^{-1}(y)=S_y=\bigcup_{z\ne y}[y,y_z]\setminus\{z\}$, there is a points $z\in X\setminus \{y\}$ such that $a\in [y,y_z]$. By Proposition~\ref{p5.1}(4), the inequality $d_{\Cob X}(x,a)<1$ implies that $x\in\{y,z\}$. If $x=y$, then  we are done because $\bar d_X(x,y)=0\le d_{\Cob X}(x,a)$.
If $x=z$, then $\bar d_X(x,y)=\bar d_X(z,y)=d_{\Cob X}(z,y_z)\le d_{\Cob X}(z,a)=d_{\Cob X}(x,a)$. 
\smallskip

2. Fix any $x\in X$ and a positive $r\le1$. The preceding item implies that 
$\pi_X(B_{\Cob X}(x,r))\subset B_X(x,r)$. The reverse inclusion will follow as soon as, 
given any point $z\in B_X(x,r)$, we find a point $y\in \pi^{-1}_X(z)=S_z$ with $d_{\Cob X}(x,y)<r$.
If $z=x$, then we can put $y=x$. So, assume that $z\ne x$. 

Consider the point $z_x\in[z,x]$ and observe that $d_{\Cob X}(x,z_x)=\bar d_X(x,z)\le d_X(x,z)<r$. If $z_x\ne x$, then we can put $y=z_x\in S_z$. If $z_x=x$, then $[z,z_x]=[z,x]$ and we can take any point $y\in [z,z_x)\subset S_z$ with $d_{\Cob X}(x,y)<r$.
\smallskip

3. Take any two points $a,b\in\Cob X$ and consider their images $y=\pi_X(a)$ and $z=\pi_X(b)$. If $y=z$, then
$$\inf_{x\in X}\bar d_X(x,y)+\bar d_X(x,z)=0\le d_{\Cob X}(a,b).$$
So, we can assume that $y\ne z$.

If $d_{\Cob X}(a,b)\ge 1$, then
$$d_{\Cob X}(a,b)\ge 1\ge \bar d_X(y,z)\ge\inf_{x\in X}\bar d_X(x,y)+\bar d_X(x,z)$$ and we are done.
So, we can assume that $d_{\Cob X}(a,b)<1$. In this case Proposition~\ref{p5.1}(5) guarantees that $d_{\Cob X}(a,b)=d_{\Cob X}(a,v)+d_{\Cob X}(v,b)$ for some vertex $v\in X\subset\Gamma X$. Applying the item (1) we conclude that 
$$
\begin{aligned}
d_{\Cob X}(a,b)&=d_{\Cob X}(v,a)+d_{\Cob X}(v,b)\ge \bar d_X(v,\pi_X(a))+\bar d_X(v,\pi_X(b))\ge\\
&\ge\inf_{x\in X}\bar d_X(x,\pi_X(a)+\bar d_X(x,\pi_X(b)).
\end{aligned}$$
\smallskip

4. If $\bar d_X$ is a pseudometric, then for any $a,b\in\Cob X$ the triangle inequality for the pseudometric $\bar d_X$ combined with the preceding item implies 
$$d_{\Cob X}(a,b)\ge\inf_{x\in X}\bar d_X(x,\pi_X(a))+\bar d_X(x,\pi_X(b))\ge d_X(\pi_X(a),\pi_X(b)),$$witnessing the non-expanding property of the compression map $\pi_X$.

Now assume that the compression map $\pi_X:\Cob X\to X$ is non-expanding. Given any three points $x,y,z\in X$ we shall prove that 
\begin{equation}\label{triangle}
\bar d_X(x,z)\le \bar d_X(y,x)+\bar d_X(y,z).
\end{equation} This inequality is trivial if $x=z$ or $x=y$. So, we assume that $x\ne z$ and $x\ne y$. Consider the points $x_y,z_y\in\Cob X$. If $x_y\ne y\ne z_y$, then
$\pi_X(x_y)=x$, $\pi_X(z_y)=z$ and then
$$
\begin{aligned}\bar d_X(x,z)&=\bar d_X(\pi_X(x_y),\pi_X(z_y))\le d_{\Cob X}(x_y,z_y)\le\\
&\le d_{\Cob X}(x_y,y)+d_{\Cob X}(z_y,y)=\bar d_X(y,x)+\bar d_X(y,z).
\end{aligned}
$$

If $x_y=y$ and $z_y\ne y$, then for every $\e>0$ we can find a point $c\in [x,x_y)=[x,y)$ such that $d_{\Cob X}(c,y)<\e$ and conclude that
$$
\begin{aligned}
\bar d_X(x,z)&=\bar d_X(\pi_X(c),\pi_X(z_y))\le d_{\Cob X}(c,z_y)\le\\
&\le d_{\Cob X}(c,y)+d_{\Cob X}(y,z_y)<\e+\bar d_X(y,z)\le \bar d_X(y,x)+\bar d_X(y,z)+\e.
\end{aligned}$$
Passing to the limit at $\e\to 0$, we get the desired inequality (\ref{triangle}). By a similar argument we can treat the cases when $z_y=y$.

For points $x$ and $y=z$ the inequality (\ref{triangle}) yields $\bar d_X(x,y)\le \bar d_X(y,x)$. This implies that the premetric $\bar d_X$ is symmetric and satisfies the triangle inequality, so is a pseudometric.
\end{proof}

Next, we establish some topological properties of the compression map. 
In the following theorem we endow the premetric space $X$ with the premetric topology.

\begin{theorem}\label{t7.3} For any premetric space $X$ 
\begin{enumerate}
\item the compression map $\pi_X:\Cob X\to X$ is a monotone quotient surjection;
\item $\pi_X:\Cob X\to X$ is hereditarily quotient if and only if the premetric space $X$ is basic;
\item the cobweb space $\Cob X$ is connected if and only if so is the space $X$.
\end{enumerate}
\end{theorem}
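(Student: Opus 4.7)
The strategy is to prove (1) directly, deduce (3) from it via Lemma~\ref{l4.4}, and handle (2) through the Arhangel'skii characterization in Theorem~\ref{hq=po}.

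For (1), monotonicity and surjectivity are already in Proposition~\ref{p7.1}(2). Continuity is immediate on $\Cob X\setminus X$ by the local constancy in Proposition~\ref{p7.1}(3); at a vertex $x\in X$, given any premetric-open $V\ni x$, I choose $r\in(0,1]$ with $B_X(x,r)\subset V$ and invoke the identity $\pi_X(B_{\Cob X}(x,r))=B_X(x,r)$ from Proposition~\ref{p7.2}(2) to conclude $B_{\Cob X}(x,r)\subset\pi_X^{-1}(V)$. The same identity drives the nontrivial quotient direction: if $\pi_X^{-1}(U)$ is open and $x\in U$, then some $B_{\Cob X}(x,r)\subset\pi_X^{-1}(U)$, whence $B_X(x,r)=\pi_X(B_{\Cob X}(x,r))\subset U$, showing that $U$ is open in the premetric topology.

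By Theorem~\ref{hq=po}, part (2) reduces to the equivalence ``$\pi_X$ is pseudo-open iff $X$ is basic''. For the reverse implication, assume $X$ basic, fix $x\in X$ and an open $U\supset\pi_X^{-1}(x)=S_x$, and choose $r_0\in(0,1]$ with $B_{\Cob X}(x,r_0)\subset U$. If $x$ were not an interior point of $\pi_X(U)$ in $X$, the basic property would supply $y_n\in B_X(x,1/n)\setminus\pi_X(U)$ for every $n$, giving $S_{y_n}\cap U=\emptyset$ and $y_n\ne x$. The point $(y_n)_x\in[y_n,x]$ lies at distance $\bar d_X(x,y_n)<1/n$ from $x$ in $\Cob X$; when $d_X(x,y_n)>0$ this point lies in $S_{y_n}$ and, for large $n$, also in $B_{\Cob X}(x,r_0)\subset U$, a contradiction. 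In the degenerate case $d_X(x,y_n)=0$ the entire arc $[y_n,x]\setminus\{x\}$ lies in $S_{y_n}$ and meets every $\Cob X$-neighborhood of $x$, yielding the same contradiction. For the forward implication, given $x\in X$ and $r\in(0,1]$ I would take $U=\mathrm{int}_{\Cob X}\pi_X^{-1}(B_X(x,r))$; Proposition~\ref{p7.2}(2) and local constancy on $\Cob X\setminus X$ give $S_x\subset U$, so pseudo-openness delivers $x\in\mathrm{int}_X\pi_X(U)\subset\mathrm{int}_X B_X(x,r)$, witnessing that $X$ is basic at $x$.

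Part (3) is then immediate: by (1) the map $\pi_X$ is a monotone quotient surjection, so Lemma~\ref{l4.4} yields $\Cob X$ connected iff $X$ connected. The main obstacle is the ``basic $\Ra$ pseudo-open'' direction in (2): the asymmetry of the premetric makes the oriented edges $[x,y]$ and $[y,x]$ behave differently in $\Gamma X$, so one must locate the witness on the correct edge, namely $(y_n)_x\in[y_n,x]$ rather than $x_{y_n}\in[x,y_n]$, in order to convert the hypothesis $d_X(x,y_n)<1/n$ into a small $\Cob X$-distance to $x$; the degenerate case $d_X(x,y_n)=0$ further forces a replacement of the collapsed endpoint $(y_n)_x=x$ by an interior point of the arc $[y_n,x]\setminus\{x\}$ to preserve the contradiction.
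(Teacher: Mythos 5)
Your proposal is correct and follows essentially the same route as the paper: part (1) from Propositions~\ref{p7.1} and \ref{p7.2}(2), part (2) reduced to pseudo-openness via Theorem~\ref{hq=po}, and part (3) from (1) together with Lemma~\ref{l4.4}. The only divergence is in (2): for ``basic $\Rightarrow$ pseudo-open'' the paper simply notes $B_X(x,r)=\pi_X(B_{\Cob X}(x,r))\subset\pi_X(U)$ by Proposition~\ref{p7.2}(2) and invokes basicness, whereas you rebuild this by hand with the points $(y_n)_x$ on the edges $[y_n,x]$ (a correct but longer contradiction argument), and for the converse you take $U=\mathrm{int}_{\Cob X}\pi_X^{-1}(B_X(x,r))$ where the paper uses the explicit open set $S_x\cup B_{\Cob X}(x,r)$; both choices work.
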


\begin{proof} 1. By Proposition~\ref{p7.1}(2), $\pi_X$ is a monotone surjection. To show that $\pi_X$ is continuous, take any open subset $U\subset X$. To check that its  preimage $\pi^{-1}_X(U)$ is open in $\Cob X$, fix any point $a\in\pi^{-1}_X(U)$. 
If $a\in\Cob X\setminus X$, then the map $\pi_X$ is locally constant and hence continuous at $a$. So, we assume that $a\in X$. In this case the definition of the premetric topology on $X$ yields a radius  $r\in(0,1)$ such that $B_X(\pi_X(a),r)\subset U$. By Proposition~\ref{p7.2}(2), $\pi_X(B_{\Cob X}(a,r))\subset B_X(\pi_X(a),r)\subset U$ and hence $B_{\Cob X}(a,r)\subset \pi^{-1}_X(U)$, witnessing that the preimage $\pi^{-1}_X(U)$ is open in $\Cob X$.

 To show that $\pi_X$ is quotient, take any subset $A\subset X$ whose preimage $\pi^{-1}_X(A)$ is open in $\Cob X$. For every $x\in A$ we get $x\in\pi^{-1}_X(A)$ and hence $B_{\Cob X}(x,r)\subset\pi^{-1}_X(A)$ for some $r\in(0,1)$. It follows from Proposition~\ref{p7.2}(2) that $$B_X(x,r)=\pi_X(B_{\Cob X}(x,r))\subset\pi_X(\pi^{-1}_X(A))=A.$$Consequently, the set $A$ is open in the premetric topology of $X$.
\smallskip

2. Assuming that the premetric space $X$ is basic, we check that the compression map $\pi_X:\Cob X\to X$ is pseudo-open and hence hereditarily quotient. Fix any point $x\in X$ and any open set $U\subset\Cob X$ containing the fiber $\pi^{-1}_X(x)$. Since $x\in U$, there is a positive $r<1$ such that $B_{\Cob X}(x,r)\subset U$. Now Proposition~\ref{p7.2}(2) implies that $B_X(x,r)=\pi_X(B_{\Cob X}(x,r))\subset\pi_X(U)$ and hence $\pi_X(U)$ is a neighborhood of $x$ in $X$ (because the ball $B_X(x,r)$ is a neighborhood of $x$).

Now assume conversely that the compression map $\pi_X$ is hereditarily quotient and hence pseudo-open. We need to check that the premetric $d_X$ of $X$ is basic at each point $x\in X$. This will follow as soon as we check that for any positive $r<1$ the  ball $B_X(x,r)$ is a neighborhood of $x$ in $X$.
Observe that the set $U=S_x\cup B_{\Cob X}(x,r)$ is an open neighborhood of the fiber $S_x=\pi_X^{-1}(x)$ in $\Cob X$. Since $\pi_X$ is pseudo-open, the image $\pi_X(U)=\{x\}\cup \pi_X(B_{\Cob X}(x,r))=B_X(x,r)$ is a neighborhood of $x$ in $X$.
\smallskip

3. The third item follows from the item (1) and Lemma~\ref{l4.4}.
\end{proof}

Finally, we apply the compression map to studying separablewise components of the cobweb space $\Cob X$.

By the {\em separablewise component} of a point $x$ of a topological space $X$ we understand the union $C(x)$ of all separable connected subspaces of $X$ that contain the point $x$. It is standard to show that two separablewise components either are disjoint or else coincide. In a countably tight topological space all separablewise components are closed. We recall that a topological space $X$ is {\em countably tight} if for each subset $A\subset X$ and a point $x\in\bar A$ in its closure there is a countable subset $B\subset A$ such that $x\in\bar B$.

We shall say that a topological space $X$ {\em contains no countable connected subspaces} if each non-empty at most countable connected subset of $X$ is a singleton. For example, each regular space contains no non-trivial countable connected subspace.

\begin{proposition} If a premetric space $X$ contains no countable connected subspace, then the fibers of the compression map $\pi_X:\Cob X\to X$ coincide with the separablewise components of $\Cob X$ and also with the arcwise components of $\Cob X$. Consequently, $X$ can be identified with the space of separablewise (or arcwise) connected components of $\Cob X$.
\end{proposition}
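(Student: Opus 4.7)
The plan is to establish, for each $x \in X$, that the fiber $\pi_X^{-1}(x)$ coincides both with the arcwise component and with the separablewise component of $x$ in $\Cob X$; the identification of $X$ with the space of components will then follow because $\pi_X$ is a quotient surjection by Theorem~\ref{t7.3}(1). One inclusion is easy and geometric: the fiber is $\pi_X^{-1}(x)=S_x=\bigcup_{y\ne x}[x,x_y]\setminus\{y\}$, a bouquet of arcs meeting at $x$, each homeomorphic to a nondegenerate (possibly half-open) subinterval of $[0,1]$ and therefore separable and arcwise connected. Consequently every $a\in S_x$ is joined to $x$ by an arc inside $S_x$ and sits in a separable connected subspace of $\Cob X$ passing through $x$. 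This places $S_x$ inside both the arcwise and the separablewise components of $x$.

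For the reverse inclusion, I would take any connected subspace $K\subseteq\Cob X$ with $x\in K$ that is either separable or arcwise connected, and show $K\subseteq S_x$. In the arcwise case, picking any $a\in K$ and a path $\gamma:[0,1]\to \Cob X$ from $x$ to $a$, the image $\gamma([0,1])$ is a separable connected subspace; so it suffices to treat the separable case. Assuming then that $K$ is separable and connected, the image $\pi_X(K)$ is connected by continuity of $\pi_X$ (Theorem~\ref{t7.3}(1)), while Proposition~\ref{p7.1}(4) yields $|\pi_X(K)|\le\dens(K)\le\aleph_0$. Since $X$ contains no countable connected subspace besides singletons and since $x=\pi_X(x)\in\pi_X(K)$, we must have $\pi_X(K)=\{x\}$, i.e.\ $K\subseteq\pi_X^{-1}(x)=S_x$.

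Combining the two inclusions gives the equality of fibers, arcwise components and separablewise components at every point $x\in X$. Because $\pi_X:\Cob X\to X$ is a quotient surjection whose fibers are precisely these components, the quotient of $\Cob X$ by its partition into arcwise (equivalently, separablewise) components is canonically homeomorphic to $X$, which is the asserted identification. The only real workhorse is Proposition~\ref{p7.1}(4), which transfers the smallness hypothesis on connected subspaces of $X$ up to $\Cob X$; once that cardinality bound is invoked, no serious obstacle remains and the argument is essentially a one-line application of the hypothesis.
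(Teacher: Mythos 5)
Your proposal is correct and follows essentially the same route as the paper: the spider $S_x$ is arcwise connected (hence inside both components of $x$), while for the reverse inclusion one applies Proposition~\ref{p7.1}(4) to a separable connected set through $x$ (reducing the arcwise case to this by taking a path) to see that its $\pi_X$-image is a countable connected, hence singleton, subset of $X$. Your explicit reduction of the arcwise case and the remark that $\pi_X$ is a quotient surjection merely spell out what the paper leaves implicit.
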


\begin{proof} Assuming that $X$ contains no countable connected subspace, we shall show that each spider $S_x=\pi_X^{-1}(x)$, $x\in X$, coincides with the separablewise component $C(x)$ of the point $x$ in the cobweb $\Cob X$. Taking into account the arcwise connectedness of the spider $S_x$, we conclude that $S_x\subset C(x)$. To show that $C(x)\subset S_x=\pi_X^{-1}(x)$, fix any point $y\in C(x)$ and find a connected separable subspace $A\subset\Cob X$ containing the points $x$ and $y$. By Proposition~\ref{p7.1}(4), the image $\pi_X(A)$ is at most countable. Being a connected subspace of $X$, the set $\pi_X(A)$ coincides with the singleton $\{x\}$. Consequently, $y\in A\subset\pi_X^{-1}(x)=S_x$.
\end{proof}

\begin{corollary}\label{c7.5} Two premetric spaces $X,Y$ containing no countable connected subspaces are homeomorphic provided their cobwebs $\Cob X$ and $\Cob Y$ are homeomorphic.
\end{corollary}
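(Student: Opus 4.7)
The plan is to show that any homeomorphism $h\colon\Cob X\to\Cob Y$ descends, via the compression maps $\pi_X\colon\Cob X\to X$ and $\pi_Y\colon\Cob Y\to Y$, to a homeomorphism $\bar h\colon X\to Y$.

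First I would apply the preceding proposition: since neither $X$ nor $Y$ contains a non-trivial countable connected subspace, the fibers of $\pi_X$ are precisely the separablewise components of $\Cob X$, and likewise for $\pi_Y$. Because the separablewise component relation is defined purely topologically, the homeomorphism $h$ must send each fiber of $\pi_X$ onto some fiber of $\pi_Y$. This produces a well-defined bijection $\bar h\colon X\to Y$ characterised by the identity $\pi_Y\circ h=\bar h\circ\pi_X$; applying the same reasoning to $h^{-1}$ supplies the set-theoretic inverse.

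It then remains to verify that $\bar h$ is bicontinuous. By Theorem~\ref{t7.3}(1), $\pi_X$ is a quotient surjection, so the premetric topology of $X$ coincides with the quotient topology inherited from $\Cob X$; the same holds for $Y$. Since the composition $\bar h\circ\pi_X=\pi_Y\circ h$ is continuous, the universal property of the quotient map $\pi_X$ forces $\bar h$ to be continuous, and the symmetric argument applied to $h^{-1}$ yields continuity of $\bar h^{-1}$.

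There is really no hard step to overcome: the corollary is essentially a formal repackaging of two ingredients that have already been put in place, namely the preceding proposition (which identifies the fibers of $\pi_X$ with a topologically intrinsic equivalence relation on $\Cob X$) and Theorem~\ref{t7.3}(1) (which makes the compression map a quotient map, so that $X$ can be identified as a topological space with $\Cob X$ modulo its separablewise components). The only point worth stating carefully is that both ingredients apply simultaneously to $X$ and $Y$, which is exactly what the hypothesis of the corollary guarantees.
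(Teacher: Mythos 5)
Your argument is correct and is exactly the one the paper intends: the preceding proposition identifies the fibers of $\pi_X$ and $\pi_Y$ with the (topologically intrinsic) separablewise components of the cobwebs, so a homeomorphism $h\colon\Cob X\to\Cob Y$ permutes fibers and induces a bijection $\bar h$ with $\pi_Y\circ h=\bar h\circ\pi_X$, and Theorem~\ref{t7.3}(1) (the compression maps are quotient) gives bicontinuity of $\bar h$. No gaps; this matches the paper's (implicit) proof.
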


\begin{remark} It is well-known that a connected locally connected complete metric 
space is arcwise connected, \cite[6.3.11]{En}.
The cobweb  over a connected metric space 
is not locally connected, although it is locally connected except at
a metrically discrete subset.
This illustrates how important it is
to assume that the space is locally connected at each point
if we want to conclude that it is arcwise connected.
\end{remark}

\section{The iterated cobweb construction}\label{icobweb}

In this section we shall iterate the cobweb functors and at limit obtain the functor $\Cob^\w:\PMetr\to\Metr$ assigning to each premetric space $X$ an economical complete metric space $\Cob^\w X$.

Given a premetric space $X$ put $\Cob^1 X=\Cob X$ and inductively, $\Cob^{n+1} X=\Cob(\Cob^n X)$ for $n\in\IN$. In such a way we define functors $\Cob^n:\Dist\to\Metr$ for all $n\in\IN$. For every $n\in\IN$ the spaces $\Cob^{n+1} X$ and $\Cob^n X$ are linked by the compression map $\pi_{\Cob^n X}:\Cob^{n+1} X\to\Cob^n X$. This map is non-expanding, surjective, monotone, and hereditarily quotient according to Proposition~\ref{p7.2}(4) and Theorem~\ref{t7.3}.

The iterated cobweb spaces and their compression maps form the inverse sequence 
\begin{equation}\label{invseq}\dots\to \Cob^{n+1} X\to\Cob^n X\to\dots\to\Cob^1 X.
\end{equation}
Let $$\Cob^\w X=\{(x_n)_{n\in\w}\in\prod_{n\in\IN}\Cob^n X:\forall n\in\IN\;\;\pi_{\Cob^n X}(x_{n+1})=x_n\}$$be the limit of this inverse sequence, and for every $n\in\IN$ let $$\pi^\w_n:\Cob^\w X\to \Cob^n X,\quad\pi^\w_n:(x_k)_{k\in\IN}\mapsto x_n$$ denote the limit projection. 

The space $\Cob^\w X$ is endowed with the metric
$$d_{\Cob^\w X}(a,b)=\max_{n\in\IN}\tfrac1n\cdot{d_{\Cob^n X}\big(\pi^\w_n(a),\pi^\w_n(b)\big)}$$
that generates the topology of $\Cob^\w X$ inherited from the product $\prod_{n\in\IN}\Cob^n X$. 

\begin{lemma}\label{l8.1} For every $n\in\IN$ the limit projection $\pi^\w_n:\Cob^\w X\to\Cob^n X$ is a monotone hereditarily quotient surjection.
\end{lemma}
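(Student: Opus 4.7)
The plan is to verify the three asserted properties of $\pi^\w_n$ one at a time -- surjectivity, the hereditarily quotient property, and monotonicity (connected fibers) -- with the last being the real obstacle. Surjectivity is routine: given $y \in \Cob^n X$, I would set $a_k = (\pi_{\Cob^k X} \circ \cdots \circ \pi_{\Cob^{n-1} X})(y)$ for $k < n$, put $a_n = y$, and then for $k \ge n$ iteratively pick $a_{k+1} \in \pi_{\Cob^k X}^{-1}(a_k)$, which is non-empty by Proposition~\ref{p7.1}(2). Continuity of $\pi^\w_n$ is immediate because the weighted max metric $d_{\Cob^\w X}(a,b) = \max_k \frac{1}{k} d_{\Cob^k X}(a_k, b_k)$ makes $\pi^\w_n$ an $n$-Lipschitz map.

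For the hereditarily quotient property I would apply Theorem~\ref{hq=po} and reduce to pseudo-openness. Given $y \in \Cob^n X$ and an open $U \subset \Cob^\w X$ containing $(\pi^\w_n)^{-1}(y)$, let $\tilde y = (\tilde y_k)$ be the canonical thread with $\tilde y_k = y$ viewed as an iterated vertex for $k \ge n$ and $\tilde y_k = (\pi_{\Cob^k X} \circ \cdots \circ \pi_{\Cob^{n-1} X})(y)$ for $k < n$. Pick $r \in (0,1)$ with $B_{\Cob^\w X}(\tilde y, r) \subset U$. For each $y' \in \Cob^n X$ with $d_{\Cob^n X}(y, y') < r$, I would build an alternative thread $(a_k)$ with $a_n = y'$, $a_k = (\pi_{\Cob^k X} \circ \cdots \circ \pi_{\Cob^{n-1} X})(y')$ for $k < n$, and for $k > n$ take $a_k = (a_{k-1})_y$ -- the explicit half-edge endpoint from the vertex $a_{k-1}$ toward the vertex $y$ in $\Cob^k X$ (cf.\ Section~\ref{cobweb}). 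A short induction using Propositions~\ref{p5.1} and~\ref{p7.2} yields $d_{\Cob^k X}(y, a_k) \le d_{\Cob^n X}(y, y')$ for every $k$, hence $d_{\Cob^\w X}(\tilde y, (a_k)) < r$ and $(a_k) \in U$ with $\pi^\w_n((a_k)) = y'$. This exhibits $B_{\Cob^n X}(y, r) \subset \pi^\w_n(U)$, proving pseudo-openness.

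The monotone property is the hard part. I would identify each fiber $F = (\pi^\w_n)^{-1}(y)$ with the inverse limit of the sets $F_m := (\pi_{\Cob^n X} \circ \cdots \circ \pi_{\Cob^{m-1} X})^{-1}(y) \subset \Cob^m X$ for $m \ge n$, the bonding maps being the restrictions of the compressions. Since every $\Cob^m X$ is a genuine metric space, hence basic, Theorem~\ref{t7.3} makes each compression monotone hereditarily quotient; restricting to $F_{m+1} \to F_m$ gives a monotone quotient surjection, and iterating Lemma~\ref{l4.4} from the base case $F_n = \{y\}$ shows each $F_m$ is connected. It remains to pass from connectedness of the individual $F_m$ to connectedness of the inverse limit $F$. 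For this I would work with the connected component $C$ of $\tilde y$ in $F$, approximate any $(x_k) \in F$ by its truncations $(x_k^N)$ defined by $x_k^N = x_k$ for $k \le N$ and $x_k^N = x_N$ as iterated vertex for $k > N$ -- the uniform diameter bound $\mathrm{diam}\,\Cob^k X \le 2$ from Proposition~\ref{p5.1}(1) gives $d_{\Cob^\w X}((x_k), (x_k^N)) \le 2/(N+1) \to 0$ -- and then argue that each such truncation lies in $C$. Combined with the closedness of connected components in the Hausdorff space $F$, this yields $C = F$.

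The real technical obstacle is the last step: showing that the truncated threads $(x_k^N)$ lie in the component of $\tilde y$. The natural approach of lifting an arc in the connected $F_N$ from $y$ to $x_N$ to an arc in $F$ with constant higher coordinates equal to the moving vertex fails, because the vertex inclusion $\Cob^m X \hookrightarrow \Cob^{m+1} X$ is discontinuous: distinct vertices of $\Cob^{m+1} X$ lie at distance $1$ apart by Proposition~\ref{p5.1}(4), making the putative lift jump between half-edges. Circumventing this will require either a more subtle deformation (exploiting the pseudo-openness just proved to transfer connectedness from $F_N$ to a neighbourhood in $F$) or an argument that bypasses explicit path construction by working directly with clopen partitions of $F$ and the spider structure of the compression fibres.
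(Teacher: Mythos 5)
Your surjectivity and pseudo-openness arguments look sound (the explicit thread $(a_k)$ with $a_k=(a_{k-1})_y$ for $k>n$ does stay within $d_{\Cob^n X}(y,y')$ of the canonical thread, by induction and the non-expansion of the compressions), and in fact they are more self-contained than the paper's treatment, which disposes of the whole lemma in two lines: by Theorem~\ref{t7.3}(2) the bonding maps $\pi_{\Cob^m X}:\Cob^{m+1}X\to\Cob^m X$ are monotone hereditarily quotient surjections (each $\Cob^m X$ being a metric, hence basic, premetric space), and then Theorem~9 and the Corollary to Theorem~11 of Puzio's paper \cite{Puzio} on limit mappings of inverse systems are invoked to conclude that the limit projections $\pi^\w_n$ are monotone and hereditarily quotient.

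The genuine gap is exactly where you say it is: the monotone part is not proved. You correctly reduce it to the connectedness of the fiber $F=(\pi^\w_n)^{-1}(y)\cong\varprojlim F_m$ with each $F_m$ connected, but an inverse limit of connected \emph{non-compact} spaces need not be connected, and the fibers here (unions of spiders) are badly non-compact, so no general theorem you have at hand applies. Your truncation idea does reduce the problem to showing that every eventually-vertex thread lies in the (quasi-)component of $\tilde y$ --- since the truncations $x^N$ of an arbitrary $x\in F$ converge to $x$, this would make that component dense, hence (being closed) all of $F$ --- but no argument for this key step is offered, and the obvious one fails for the reason you note: the set of truncated threads at level $N$ is uniformly discrete in $F$ (distinct iterated vertices sit at distance $1$ in every $\Cob^k X$, $k>N$), so connectedness of $F_N$ cannot be transported by lifting paths coordinatewise. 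This is precisely the content supplied by the cited results of Puzio, and your proposed alternatives (a ``more subtle deformation'' or a clopen-partition argument) remain unsubstantiated; to complete the proof you must either carry out such an argument in detail or, as the paper does, quote a theorem on inverse sequences with monotone pseudo-open bonding maps whose limit projections are again monotone.
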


\begin{proof} By Theorem~\ref{t7.3}(2), the bonding projections $\pi_{\Cob^n X}:\Cob^{n+1}X\to\Cob^n X$ of the inverse sequence (\ref{invseq}) are monotone hereditarily quotient surjections. Applying Theorems~9 and Corollary to Theorem 11 in \cite{Puzio}, we conclude that each limit projection $\pi^\w_n:\Cob^\w X\to\Cob^n X$ is monotone and hereditarily quotient.
\end{proof} 

Taking the composition of the projection $\pi^\w_1:\Cob^\w X\to\Cob X$ with the compression map $\pi_X:\Cob X\to X$ we get an important map $$\pi^\w_X=\pi_X\circ\pi^\w_1:\Cob^\w X\to X.$$

Some properties of this map are collected in:

\begin{theorem}\label{t8.2} For a premetric space $X$,
\begin{enumerate}
\item  $\Cob^\w X$ is an economical complete metric space of cardinality $|\Cob^\w X\,|\le|X|^\w$;
\item the map $\pi^\w_X:\Cob^\w X\to X$ is a monotone quotient surjection;
\item the map $\pi^\w_X$ is hereditarily quotient of and only if the premetric space $X$ is basic;
\item the map $\pi^\w_X:\Cob^\w X\to X$ is non-expanding provided the premetric $\bar d_X=\min\{1,d_X\}$ is a pseudometric;
\item the space $\Cob^\w X$ is connected if and only if $\Cob^\w X$ is nonseparably connected if and only if the space $X$ is connected.
\end{enumerate}
\end{theorem}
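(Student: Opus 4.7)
The plan is to exploit the factorization $\pi^\w_X=\pi_X\circ\pi^\w_1$ together with properties of the inverse limit metric $d_{\Cob^\w X}(a,b)=\max_{n\in\IN}\tfrac1nd_{\Cob^n X}(\pi^\w_n(a),\pi^\w_n(b))$. Completeness of $\Cob^\w X$ in~(1) is routine: each $\Cob^n X$ is complete (noted in Section~\ref{cobweb}), the countable inverse limit of complete metric spaces with non-expanding bondings is closed in the product and hence complete. The cardinality bound $|\Cob^\w X|\le|X|^\w$ follows by induction from $|\Cob^n X|\le\mathfrak c\cdot|X|$ and $|\Cob^\w X|\le\prod_n|\Cob^n X|$.

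The main obstacle is the economicality claim in~(1). Fix an infinite $A\subset\Cob^\w X$ with $\kappa=\dens(A)$. The decisive observation is that $|\pi^\w_n(A)|\le\kappa$ for every $n\in\IN$: factoring $\pi^\w_n=\pi_{\Cob^n X}\circ\pi^\w_{n+1}$, continuity of $\pi^\w_{n+1}$ gives $\dens(\pi^\w_{n+1}(A))\le\kappa$, and Proposition~\ref{p7.1}(4) yields
\[
|\pi^\w_n(A)|=|\pi_{\Cob^n X}(\pi^\w_{n+1}(A))|\le\dens(\pi^\w_{n+1}(A))\le\kappa.
\]
Hence for each $n$ the set $C_n=\{d_{\Cob^n X}(\pi^\w_n(a),\pi^\w_n(b)):a,b\in A\}$ has cardinality at most $\kappa^2=\kappa$. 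Since $d_{\Cob^n X}\le 2$, whenever $d_{\Cob^\w X}(a,b)\ge\e$ the defining maximum must be attained at some index $n\le N:=\lceil 2/\e\rceil$. Consequently the set of distances $\ge\e$ in $d_{\Cob^\w X}(A\times A)$ is contained in the image of $\prod_{n\le N}C_n$ under the componentwise rescaled maximum, a set of cardinality at most $\kappa^N=\kappa$. Taking $\e=1/m$ and uniting over $m\in\IN$ gives $|d_{\Cob^\w X}(A\times A)|\le\kappa$, establishing economicality.

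The remaining items are composition arguments. Item~(4) is immediate: $\pi^\w_1$ is non-expanding by construction, and $\pi_X$ is non-expanding under the pseudometric hypothesis on $\bar d_X$ (Proposition~\ref{p7.2}(4)), so $\pi^\w_X$ is non-expanding. For~(2), Lemma~\ref{l8.1} and Theorem~\ref{t7.3}(1) give both factors as quotient surjections, so $\pi^\w_X$ is a quotient surjection; monotonicity follows by applying Lemma~\ref{l4.4} to the restriction $\pi^\w_1\colon(\pi^\w_1)^{-1}(S_x)\to S_x$, which is a quotient map (from the hereditarily quotient property of $\pi^\w_1$) with connected fibers (monotonicity of $\pi^\w_1$) onto the connected spider $S_x=\pi_X^{-1}(x)$, yielding connectedness of $(\pi^\w_X)^{-1}(x)$. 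For~(3), compositions of hereditarily quotient maps are hereditarily quotient, giving one direction via Theorem~\ref{t7.3}(2); for the converse, if $\pi^\w_X$ is pseudo-open (Theorem~\ref{hq=po}), then any open $V\subset\Cob X$ with $\pi_X^{-1}(x)\subset V$ pulls back to an open $U=(\pi^\w_1)^{-1}(V)$ containing $(\pi^\w_X)^{-1}(x)$ with $\pi^\w_X(U)=\pi_X(V)$ (by surjectivity of $\pi^\w_1$), so $x$ lies in the interior of $\pi_X(V)$, making $\pi_X$ pseudo-open and hence $X$ basic by Theorem~\ref{t7.3}(2). Finally~(5) is assembled: Lemma~\ref{l4.4} with~(2) yields $\Cob^\w X$ connected iff $X$ connected, while Proposition~\ref{eco-p1} applied to the economical metric from~(1) forces every separable subspace of $\Cob^\w X$ to be zero-dimensional, so each connected separable subspace is a singleton and connectedness of $\Cob^\w X$ upgrades to nonseparable connectedness.
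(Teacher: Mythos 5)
Your proposal is correct and follows essentially the same route as the paper: the same factorization $\pi^\w_X=\pi_X\circ\pi^\w_1$, the bound $|\pi^\w_n(A)|\le\dens(A)$ via Proposition~\ref{p7.1}(4) to count the possible values of the maximum-metric, Lemma~\ref{l8.1} and Theorem~\ref{t7.3} for items (2)--(4), and Lemma~\ref{l4.4} plus Proposition~\ref{eco-p1} for (5). Your minor variations (counting distances above a threshold $\e$ instead of summing over all levels, and proving the converse of (3) by checking pseudo-openness of $\pi_X$ directly) are just local rephrasings of the paper's steps.
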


\begin{proof} 1. The completeness of the metric $d_{\Cob^\w X}$ on $\Cob^\w X$ follows from the completeness of the iterated cobwebs $\Cob^n X$ and the closedness of $\Cob^\w X$ in the Tychonov product $\prod_{n\in\IN}\Cob^n X$.

To show that $\Cob^\w X$ is economical, take any infinite subset $A\subset \Cob^\w X$ and observe that for every $n\in\IN$
\begin{equation}\label{up2}
|\pi^\w_n(A)|=|\pi_{\Cob^n X}(\pi^\w_{n+1}(A))|\le\dens(\pi^\w_{n+1}(A))\le\dens(A)
\end{equation}
according to Proposition~\ref{p7.1}(4).

Observe that for any $a,b\in A$ we have
$$d_{\Cob^\w X}(a,b)\in\big\{\tfrac1n\cdot d_{\Cob^n X}(\pi^\w_n(a),\pi^\w_n(b))\colon n\in\N\big\}$$ and thus
$$d_{\Cob^\w X}(A\times A)\subset\big\{\tfrac1n\cdot d_{\Cob^n X}(x,y):x,y\in\pi^\w_n(A),\;n\in\IN\big\}.$$
Combining this with (\ref{up2}) we get the desired inequality 
$$|d_{\Cob^\w X}(A\times A)|\le \sum_{n\in\IN}|\pi^\w_n(A)\times\pi^\w_n(A)|\le\aleph_0\cdot\dens(A)^2=\dens(A)$$
confirming the economical property of the metric $d_{\Cob^\w X}$.

Finally, we show that $|\Cob^\w X|\le|X|^\w$. This is clear if $X$ is a singleton. So, we assume that $|X|\ge 2$. It follows from the definition of cobweb space that $|\Cob X|\le  \mathfrak c\cdot |X|$. By induction, $|\Cob^n X|\le \mathfrak c\cdot|X|$ and then $|\Cob^\w X|\le(\mathfrak c\cdot|X|)^\w=|X|^\w$. 
\smallskip

2. By Lemma~\ref{l8.1} and Theorem~\ref{t7.3}(1) the maps $\pi^\w_1:\Cob^\w X\to\Cob X$ and $\pi_X:\Cob(X)\to X$ are quotient surjections. Then so is their composition  $\pi^\w_X=\pi_X\circ\pi^\w_1$.

Next, we check that the map $\pi^\w_X$ is monotone. 
Given any point $x\in X$, let $C=(\pi^\w_X)^{-1}(x)=(\pi^\w_1)^{-1}(S_x)$. Since the map $\pi^\w_1:\Cob^\w X\to\Cob X$ is hereditarily quotient, the restriction $\pi^\w_X|C:C\to S_x$ is quotient. Now the connectedness of the spider $S_x$ and Lemma~\ref{l4.4} guarantees that $C$ is connected.
\smallskip

3. If the map $\pi^\w_X=\pi_X\circ\pi^\w_1$ is hereditarily quotient, then so is the map $\pi_X:\Cob X\to X$ and then the premetric space $X$ is basic according to Theorem~\ref{t7.3}(2). On the other hand, if the premetric space $X$ is basic, then by Theorem~\ref{t7.3}(2), the compression map $\pi_X:\Cob X\to X$ is hereditarily quotient, and then the map $\pi^\w_X=\pi_X\circ\pi^\w_1$ is hereditarily quotient as the composition of two hereditarily quotient maps.
\smallskip

4. If the premetric $\bar d_X=\{1,d_X\}$ is a pseudometric, then Proposition~\ref{p7.2}(4) guarantees that  the compression map $\pi_X:\Cob X\to X$ is non-expanding. Then $\pi^\w_X=\pi_X\circ\pi^\w_1$ is non-expanding as the composition of two non-expanding maps.

5. The last item follows from the item (2), Lemma~\ref{l4.4} and Proposition~\ref{eco-p1}.
\end{proof}

\smallskip

Now we show that the construction of $\Cob^\w X$ can be completed to a functor $\Cob^\w:\Dist\to\Metr$. Given any non-expanding map $f:X\to Y$ between premetric spaces, consider the non-expanding maps $\Cob^n f:\Cob^n X\to\Cob^n Y$ for all $n\in\IN$. The naturality of the compression maps implies the commutativity of the following diagrams for all $n$:
$$\begin{CD}
\Cob^{n+1} X@>{\Cob^{n+1}f}>>\Cob^{n+1} Y\\
@V{\pi_{\Cob^n X}}VV@VV{\pi_{\Cob^n Y}}V\\
\Cob^{n}(X)@>>{\Cob^{n}f}>\Cob^n Y
\end{CD}$$
The commutativity of those diagrams ensures that the map $$\Cob^\w f:\Cob^\w X\to\Cob^\w Y,\;\;\Cob^\w f:(x_n)_{n\in\IN}\mapsto(\Cob^n f(x_n))_{n\in\IN}$$is well-defined and that the following diagram is commutative:
$$\begin{CD}
\Cob^\w X@>{\Cob^\w f}>>\Cob^\w Y\\
@V{\pi_X^\w}VV@VV{\pi_Y^\w}V\\
X@>>{f}>Y.
\end{CD}$$
The commutativity of this diagram means that the maps $\pi^\w_X:\Cob^\w X\to X$ compose components of the natural transformation  $\pi^\w:\Cob^\w\to\mathrm{Id}$ of the functor $\Cob^\w$ into the identity functor.

\section{The functor of economical resolution}\label{ecorez}

In this section we define and study the functor $\Eco:\Top\to\Metr$ of economical resolution. This functor is defined as the composition 
$$\Eco=\Cob^\w\circ \Dec$$of the functors $\Dec:\Top\to\PMetr$ and $\Cob^\w:\PMetr\to\Metr$.

Thus $\Eco X=\Cob^\w(\Dec X)$ for every topological space $X$. Next, define the resolution map $\xi_X:\Eco X\to\seq X$ as the composition $\xi_X=c_X\circ \pi^\w_{\Dec X}$ of two maps: $\pi^\w_{\Dec X}:\Cob^\w(\Dec X)\to\Dec X$ and $c_X:\Dec X\to\seq X$.
We recall that $\seq X$ stands for the sequential coreflexion of $X$ (which is $X$ endowed with the topology consisting of all sequentially open subsets).  

The maps $\xi_X$ can be seen as the components of a natural transformation  $\xi:\Eco\to \seq$ from the functor $\Eco$ to the functor $\seq$ of sequential coreflexion.

The following theorem describes some properties of the spaces $\Eco X$ and maps $\xi_X:\Eco X\to X$. Below by a {\em convergent sequence} in a topological space $X$ we understand the image $f(S_0)$ of the standard convergent sequence $S_0=\{0\}\cup\{\frac1n:n\in\IN\}\subset\IR$ under a continuous map $f:S_0\to X$.

\begin{theorem}\label{t9.1} For any topological space $X$,
\begin{enumerate}
\item $\Eco X=\Cob^\w\Dec X$ is an economical complete metric space of cardinality $|\Eco X|\le \mathfrak c\cdot|X|^\w$;
\item the map $\xi_X:\Eco X\to \seq X$ is a monotone quotient surjection;
\item the space $\Eco X$ is connected if and only if the sequential coreflexion $\seq X$ of $X$ is connected.
\item the map $\xi_X:\Eco X\to X$ is quotient if and only if the space $X$ is sequential;
\item the map $\xi_X:\Eco X\to X$ is herededitarily quotient if and only if $X$ is Fr\'echet-Urysohn;
\item each point $a\in\Eco X$ has a neighborhood $U_a\subset\Eco X$ whose image $\xi_X(U_a)$ lies in a convergent sequence in $X$.
\end{enumerate}
\end{theorem}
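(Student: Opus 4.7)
The plan is to verify (1)--(6) in turn by assembling the functorial properties of $\Dec$ and $\Cob^\w$ established earlier. Item (1) will follow immediately from the bound $|\Dec X|=|C(S_0,X)\times S_0|\le\mathfrak c\cdot|X|^\w$ combined with Theorem~\ref{t8.2}(1). For item (2), I will factor $\xi_X=c_X\circ\pi_{\Dec X}\circ\pi^\w_1$ and use that each factor is a quotient surjection (Corollary~\ref{c4.2}, Theorem~\ref{t7.3}(1), Lemma~\ref{l8.1}), so the composition is quotient and surjective. Items (3) and (4) follow quickly: (3) by Lemma~\ref{l4.4} applied to (2), and (4) by noting that the quotient topology induced on $X$ by $\xi_X$ is exactly the sequential coreflexion, so $\xi_X$ is quotient onto the original space if and only if $X=\seq X$, that is, if and only if $X$ is sequential.

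The delicate point in item (2) will be monotonicity. The plan is to compute
$$\xi_X^{-1}(x)=(\pi^\w_1)^{-1}\Bigl(\bigcup_{y\in c_X^{-1}(x)}S_y\Bigr)$$
and to show that the inner union is connected in $\Cob\Dec X$. The key observation is that any two $y,y'\in c_X^{-1}(x)$ satisfy $d_{\Dec X}(y,y')=0$, so the arc $[y,y_{y'}]$ has length $1$ and accumulates at $y'$; hence $y'\in\overline{S_y}\cap S_{y'}$ and the spiders glue together via a common ``hub'' spider. Lemma~\ref{l8.1} combined with Lemma~\ref{l4.4} will then lift this connectedness through the hereditarily quotient projection $\pi^\w_1$ to the whole fiber $\xi_X^{-1}(x)$.

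For item (5), the forward direction will follow from (4) together with the fact that every quotient map onto a Fr\'echet-Urysohn space is pseudo-open (hence hereditarily quotient by Theorem~\ref{hq=po}). I will prove this auxiliary fact directly: if $y$ is not interior to $\xi_X(U)$ for some open $U\supset\xi_X^{-1}(y)$, Fr\'echet-Urysohn yields a sequence $y_n\to y$ in $X\setminus\xi_X(U)$; the saturated set $A=\xi_X^{-1}(\{y_n\})$ is then disjoint from $U$ and non-closed (since $\{y_n\}$ is non-closed and $\xi_X$ is quotient), so $\overline A$ meets $\xi_X^{-1}(y)\subset U$, contradicting $U\cap A=\emptyset$. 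For the converse, assuming $\xi_X$ hereditarily quotient and given $A\subset X$ with $x\in\overline A$, I will consider the open set $U=\Eco X\setminus\overline{\xi_X^{-1}(A)}$; pseudo-openness forces $\xi_X^{-1}(x)\not\subset U$, and first-countability of the metric space $\Eco X$ then yields a sequence in $\xi_X^{-1}(A)$ converging to a point of $\xi_X^{-1}(x)$, whose $\xi_X$-image is the desired sequence in $A$ converging to $x$.

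Item (6) reduces to a ball-image computation. Taking $U_a:=B_{\Eco X}(a,r)$ for small $0<r<1$: if $a_1\in\Cob\Dec X\setminus\Dec X$ lies on an open edge $(y,z)$, then for sufficiently small $r$ the ball stays in that edge on which $\pi_{\Dec X}$ is constantly $y$, so $\xi_X(U_a)$ is a singleton; if instead $a_1=(f,t)\in\Dec X$ is a vertex, Proposition~\ref{p7.2}(2) gives $\pi_{\Dec X}(B_{\Cob\Dec X}(a_1,r))=B_{\Dec X}(a_1,r)$, and unpacking the definition of $d_{\Dec X}$ shows that $c_X$ carries this ball into $f(S_0)$, a convergent sequence in $X$. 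The two main obstacles will be the spider-gluing argument for monotonicity in (2) and the pseudo-open analysis in (5); the remaining items reduce cleanly to the functorial toolbox already developed.
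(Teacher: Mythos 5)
Most of your plan coincides with the paper's proof and is sound: the cardinality and economy in (1) via Theorem~\ref{t8.2}(1), the factorization $\xi_X=c_X\circ\pi_{\Dec X}\circ\pi^\w_1$ and the spider-gluing argument (using $d_{\Dec X}(y,y')=0$ for $y,y'\in c_X^{-1}(x)$, so $y'\in\overline{S_y}$) lifted through the monotone hereditarily quotient map $\pi^\w_1$ via Lemmas~\ref{l8.1} and~\ref{l4.4} for (2), items (3), (4) and (6) (your ball computation with Proposition~\ref{p7.2}(2) is exactly the paper's), and your first-countability argument for the implication ``$\xi_X$ hereditarily quotient $\Rightarrow X$ Fr\'echet--Urysohn'' in (5), which simply re-proves the fact the paper cites from \cite[2.4.G]{En}.

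The gap is in the other half of (5). The auxiliary fact you invoke, ``every quotient map onto a Fr\'echet--Urysohn space is pseudo-open,'' is false for arbitrary targets; the paper is careful to state it only for \emph{sequentially Hausdorff} Fr\'echet--Urysohn spaces, while Theorem~\ref{t9.1} assumes nothing about $X$. Your direct proof breaks precisely at the step ``so $\overline A$ meets $\xi_X^{-1}(y)$'': from non-closedness of $A=\xi_X^{-1}(\{y_n\})$ you only obtain a point $p\in\overline A\setminus A$ with $\xi_X(p)\in\overline{\{y_n:n\in\w\}}\setminus\{y_n:n\in\w\}$, and without uniqueness of sequential limits in $X$ this closure may contain points other than $y$, so $p$ need not lie in $\xi_X^{-1}(y)\subset U$. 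Indeed a quotient map of a metric space onto a Fr\'echet--Urysohn space can fail to be pseudo-open: take the metric sum of one convergent sequence $a_n\to a$ and of spines $b_{n,m}\to b_n$ (as $m\to\infty$), and identify $a_n$ with $b_n$ for each $n$ and $b_{n,m}$ with $b_{n',m}$ for all $n,n',m$; the quotient is Fr\'echet--Urysohn (but not sequentially Hausdorff), yet the clopen set $U=\{a\}\cup\{a_n:n\in\IN\}$ contains the fiber of $[a]$ while its image misses all classes $[b_{n,m}]$, which converge to $[a]$. So for a general $X$ you cannot argue abstractly from ``quotient onto Fr\'echet--Urysohn''; you must use the specific structure of $\Dec X$, as the paper does: given $x_n\to x$ with $x_n\notin\xi_X(U)$, the point $a=(f,0)\in\Dec X$ encoding this convergent sequence lies in $(c_X\circ\pi_{\Dec X})^{-1}(x)\subset U$, and by Proposition~\ref{p7.2}(2) every ball $B_{\Cob\Dec X}(a,r)\subset U$ maps under $c_X\circ\pi_{\Dec X}$ onto a set containing a tail of $(x_n)$, a contradiction; this gives pseudo-openness (hence, by Theorem~\ref{hq=po}, hereditary quotientness) of $c_X\circ\pi_{\Dec X}$, and composing with $\pi^\w_1$ from Lemma~\ref{l8.1} finishes the proof. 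Your write-up of (5) should be replaced by this structural argument (or at least restricted, as stated, to sequentially Hausdorff $X$, which would weaken the theorem).
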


\begin{proof} 1. The first item follows from Theorem~\ref{t8.2}(1) and the fact that $|\Dec X|=\aleph_0\cdot|X|^\w$.
\smallskip

2. The map $\xi_X=\pi^\w_X\circ c_X:\Eco X\to\seq X$ is quotient, being the composition of two maps which are quotient by Theorem~\ref{t8.2}(2) and Corollary~\ref{c4.2}. To show that $\xi_X$ is monotone, take any point $x\in X$ and consider the preimage $c_X^{-1}(x)\subset\Dec X$. For any points $y,z\in c_X^{-1}(x)$ we get $d_{\Dec X}(y,z)=0$ by the definition of the premetric $d_{\Dec}$ and $[y,z]\subset \bar S_y$ by the definition of $\Cob(\Dec X)$. 
Now we see that the union $\bigcup_{y\in c_X^{-1}(x)}\bar S_y=(c_X\circ \pi_X)^{-1}(x)$ is a connected subspace of $\Cob(\Dec X)$. By Lemma~\ref{l8.1}, the map $\pi^\w_1:\Cob^\w X\to\Cob X$ is monotone and hereditarily quotient. Consequently, the preimage $$\xi_X^{-1}(x)=(\pi^\w_1)^{-1}(c_X\circ\pi_X)^{-1}(x))$$is connected witnessing that the map $\xi_X$ is monotone.
\smallskip

3. The third item follows from the second item and Lemma~\ref{l4.4}.
\smallskip

4. If the space $X$ is sequential, then $\seq X=X$ and hence the map $\xi_X:\Eco X\to X$ is quotient by the second statement. If the map $\xi_X:\Eco X\to X$ is quotient, then the space $X$ is sequential, being the image of a metrizable space under a quotient map, see \cite[2.4.G]{En}.
\smallskip

5. If the map $\xi_X:\Eco(X)\to X$ is hereditarily quotient, then the space $X$ is Fr\'echet-Urysohn, being the image of a metrizable space under a hereditarily quotient map, see \cite[2.4.G]{En}. 

Now assume conversely that the space $X$ is Fr\'echet-Urysohn. In this case $\seq X=X$. First we show that the composition $c_X\circ \pi_{\Dec X}:\Cob\circ \Dec X\to X$ is pseudo-open. Given any point $x\in X$ and an open set $U\subset \Cob(\Dec X)$ containing the preimage $(c_X\circ\pi_{\Dec X})^{-1}(x)$, we need to check that the image $V=c_X\circ \pi_{\Dec X}(U)$ is a neighborhood of $x$ in $X$. Assuming the converse, we could find a sequence $\{x_n\}_{n\in\IN}\subset X\setminus V$ that converges to the point $x$. Consider the continuous map $f:S_0\to X$ such that $f(0)=x_0$ and $f(\frac1n)=x_n$. Consider the point $a=(f,0)\in \Dec X$. This point also belongs to the cobweb $\Cob(\Dec X)$ and lies in the set $(c_X\circ\pi_{\Dec X})^{-1}(x)\subset U$. Since $U$ is open, there is $r\in(0,1]$ such that $B_{\Cob\Dec X}(a,r)\subset U$. Choose any integer $n>\frac1r$.  By Proposition~\ref{p7.2}(2), 
$$(f,\tfrac1n)\in B_{\Dec X}(a,r)=\pi_{\Dec X}(B_{\Cob\Dec X}(a,r))\subset \pi_{\Dec X}(U)$$ and hence  $x_n=f(\tfrac1n)=c_X(f,\frac1n)\subset c_X\circ \pi_{\Dec X}(U)=V$,
which contradicts the choice of $x_n$. This contradiction shows that the map $c_X\circ \pi_{\Dec X}$ is pseudo-open and hence hereditarily quotient.

By Lemma~\ref{l8.1}, the map $\pi^\w_1:\Cob^\w(\Dec X)\to\Cob(\Dec X)$ is hereditarily quotient. Then $\xi_X=c_X\circ \pi_{\Dec X}\circ\pi^\w_1$ is hereditarily quotient as the composition of hereditarily quotient maps.
\smallskip

6. Since $\xi_X=c_X\circ \pi_{\Dec X}\circ \pi^\w_1$ and the map $\pi^\w_1:\Cob^\w\Dec X\to\Cob\Dec X$ is continuous, it suffices to check that each point $a\in\Cob\Dec X$ has a neighborhood $U_a$ whose image $c_X\circ \pi_{\Dec X}(U_a)$ lies in a convergent sequence in $X$. If $a\in\Cob\Dec X\setminus\Dec X$, then the map $\pi_{\Dec X}$ is locally constant at $a$. Consequently, $a$ has a neighborhood $U_a$ whose image $c_X(\pi_{\Dec X}(U_a))$ is a singleton.

If $a\in\Dec X\subset \Cob\Dec X$, then $a=(f,t)$ for some continuous map $f\in C(S_0,X)$ and some $t\in S_0$. By the definition of the premetric $d_{\Dec X}$, we get $c_X(B_{\Dec X}(a,1))\subset f(S_0)$. Letting $U_a=B_{\Cob\Dec X}(a,1)$ and applying Proposition~\ref{p7.2}(2), we conclude that the image
$$c_X\circ\pi_{\Dec X}(U_a)=c_X(B_{\Dec X}(a,1))\subset f(S_0)$$lies in the convergent sequence $f(S_0)$ in $X$.
\end{proof}

\section{Locally extremal functions on connected metric spaces}

In this section we shall apply the cobweb construction to constructing a non-constant locally extremal function defined on a connected complete metric space.

A function $f:X\to\IR$ is called {\em locally extremal} if each point $x\in X$ is a point of local maximum or local minimum of $f$. In 1912 W.Sierpi\'nski \cite{Ser} proved that each continuous locally extremal function $f:\IR\to\IR$ is constant. In \cite{BGN} this result was generalized to continuous locally extremal functions $f:X\to\IR$ defined on 
a connected topological space $X$ of weight $w(X)<\mathfrak c$. Another generalization was proved by  Fedeli and Le Donne \cite{FLD}:

\begin{theorem}[Fedeli, Le Donne] A continuous locally extremal function $f:X\to \IR$ defined on a connected topological space $X$ with cellularity $c(X)<\mathfrak c$ is constant.
\end{theorem}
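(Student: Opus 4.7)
The plan is to argue by contradiction: assuming $f$ is non-constant, use the connectedness of $X$ together with continuity of $f$ to deduce that $f(X)$ is a non-degenerate real interval, hence has cardinality $\mathfrak c$. The aim is then to exhibit a family of $\mathfrak c$ pairwise disjoint non-empty open subsets of $X$, contradicting the hypothesis $c(X)<\mathfrak c$.

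First I would dispose of the ``thick-fibre'' values. Put $T=\{t\in\IR:\mathrm{int}\,f^{-1}(t)\ne\emptyset\}$; the sets $\mathrm{int}\,f^{-1}(t)$ for $t\in T$ form a pairwise disjoint family of non-empty open sets, so if $|T|\ge\mathfrak c$ we are already done. Hence one may assume $|T|<\mathfrak c$, and then the complementary set $C:=f(X)\setminus T$ of thin-fibre values has cardinality $\mathfrak c$.

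For each $c\in C$ pick $x_c\in f^{-1}(c)$; by local extremality $x_c$ is a local maximum or a local minimum of $f$. By pigeonhole on $C$, one of the two types accounts for $\mathfrak c$ values; WLOG each such $x_c$ is a local maximum, with open neighbourhood $U_c$ satisfying $f|_{U_c}\le c$. Since $\mathrm{int}\,f^{-1}(c)=\emptyset$ no non-empty open subset of $U_c$ can lie in $f^{-1}(c)$; a brief continuity argument (any open neighbourhood $W\subseteq U_c$ of $x_c$ missing $\{f<c\}$ would satisfy $f|_W=c$, hence $W\subseteq\mathrm{int}\,f^{-1}(c)=\emptyset$) shows that $x_c$ lies in the closure of $U_c\cap f^{-1}((-\infty,c))$. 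Consequently, for every $\delta>0$ the open set $V_c^{(\delta)}:=U_c\cap f^{-1}((c-\delta,c))$ is non-empty.

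The remaining step is to extract $\mathfrak c$ pairwise disjoint sets from the family $\{V_c^{(\delta)}\}_{c\in C,\,\delta>0}$, and this is where I expect the principal obstacle. Working through image-disjointness alone does not suffice: $\IR$ admits only countably many pairwise disjoint open intervals, so the intervals $(c-\delta_c,c)$ cannot by themselves produce a family of size $\mathfrak c$; the topology of $X$ itself must intervene. My plan is a transfinite recursion of length $\mathfrak c$: at stage $\alpha<\mathfrak c$, having chosen pairwise disjoint $W_\beta=V_{c_\beta}^{(\delta_\beta)}$ for $\beta<\alpha$, invoke $c(X)<\mathfrak c$ to argue that only fewer than $\mathfrak c$ values of $c\in C$ are topologically ``blocked'' by the previously chosen opens, so a fresh pair $(c_\alpha,\delta_\alpha)$ can be picked with $W_\alpha:=V_{c_\alpha}^{(\delta_\alpha)}$ disjoint from all earlier $W_\beta$. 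Making the ``fewer than $\mathfrak c$ blocked values'' step rigorous is the delicate point; one may need to choose $x_c$ more carefully inside $f^{-1}(c)$, or to shrink $V_c^{(\delta)}$ further using the neighbourhood basis at $x_c$, in order to push the recursion through to full length $\mathfrak c$ and produce the desired cellular family.
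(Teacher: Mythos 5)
Your proposal is not complete, and the gap is exactly where you flag it. The preliminary reductions are fine: the thick-fibre values $T$ satisfy $|T|\le c(X)$, and for a thin-fibre local-maximum value $c$ the sets $V_c^{(\delta)}=U_c\cap f^{-1}((c-\delta,c))$ are indeed non-empty and open. But the whole contradiction with $c(X)<\mathfrak c$ rests on producing $\mathfrak c$ pairwise disjoint open sets from the family $\{V_c^{(\delta)}\}$, and the transfinite recursion you sketch has no working mechanism: a single previously chosen open set $W_\beta$ may meet every $V_c^{(\delta)}$ for $\mathfrak c$ many values $c$ (nothing prevents $W_\beta$ from accumulating on $\mathfrak c$ many fibres), so there is no bound of the form ``fewer than $\mathfrak c$ blocked values'' and the recursion can stall at a very early stage. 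So as written this is a missing idea, not a routine verification. (For the record, the paper itself gives no proof of this statement; it is quoted from Fedeli and Le Donne, so there is no internal argument to compare against.)

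The gap can be closed by a shorter argument which in fact shows that your ``thin-fibre'' case never occurs: for every $t$ in the interior of the interval $f(X)$ the fibre $f^{-1}(t)$ has non-empty interior. Indeed, let $G^-=\{x:f(x)<t\}$ and $G^+=\{x:f(x)>t\}$, both non-empty open sets. If $x\in\overline{G^+}\cap\overline{G^-}$, then $f(x)=t$ by continuity, and $x$ can be neither a local maximum (every neighbourhood of $x$ meets $G^+$) nor a local minimum (every neighbourhood meets $G^-$), contradicting local extremality; hence $\overline{G^+}\cap\overline{G^-}=\emptyset$. Since $X$ is connected, the disjoint non-empty closed sets $\overline{G^+}$ and $\overline{G^-}$ cannot cover $X$, so the open set $X\setminus(\overline{G^+}\cup\overline{G^-})$ is non-empty and lies in $f^{-1}(t)$, i.e. $\mathrm{int}\,f^{-1}(t)\ne\emptyset$. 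Consequently, if $f$ is non-constant, then $\{\mathrm{int}\,f^{-1}(t):t\in\mathrm{int}\,f(X)\}$ is already a pairwise disjoint family of $\mathfrak c$ non-empty open sets, so $c(X)\ge\mathfrak c$ and the theorem follows. In other words, your first (thick-fibre) observation is not a case to be discarded but the heart of the proof; once the above lemma is proved, no recursion or choice of witnesses $x_c$, $U_c$ is needed at all.
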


The cellularity requirement is essential in this theorem as shown by the projection
$\pr:[0,1]\times[0,1]\to[0,1]$ of the lexicographic square onto the interval, see \cite{BGN}. This projection is locally extremal but not constant. On the other hand, the lexicographic square $[0,1]\times[0,1]$ is a first countable linearly ordered connected compact Hausdorff space.

Having in mind this example, Morayne and W\'ojcik  asked in \cite{MWp} and \cite{MW} if there is a non-constant locally extremal function defined on a connected (complete) metric space. 

An example of such a function was first constructed by Fedeli and Le Donne \cite{FLD} and independently by the authors in \cite{BVW}. In this paper we construct such a non-constant locally extremal function with help of the cobweb construction.

Consider the set $\IA=\{-1,+1\}\times [0,1]$ endowed with the premetric
$$d((i,x),(j,y))=\begin{cases}x-y&\mbox{if $i=-1$ and $x\ge y$}\\
y-x&\mbox{if $i=+1$ and $y\ge x$}\\
1&\mbox{otherwise}.
\end{cases}
$$ 

By Theorem~\ref{t7.3}(1) the map $\pi_\IA:\Cob\,\IA\to\IA$ is a monotone quotient surjection. Consider its composition with the projection 
$$\pr:\IA\to[0,1],\;\pr:(i,x)\mapsto x.$$

\begin{theorem} The composition $f=\pr\circ\pi_\IA:\Cob\,\IA\to [0,1]$ is a surjective continuous locally extremal function defined on the  connected complete metric space $\Cob\,\IA$.
\end{theorem}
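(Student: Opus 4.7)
The plan is to verify, in sequence, that $\Cob\,\IA$ is a connected complete metric space, and that the function $f$ is surjective, continuous, and locally extremal, each step reducing to results already proved in Sections~\ref{cobweb}--\ref{compression}.

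Completeness is immediate: $\Cob\,\IA$ is, by definition, a closed subset of the complete metric space $\Gamma\,\IA$ (Proposition~\ref{p5.1}(1)). For connectedness, by Theorem~\ref{t7.3}(3) it suffices to verify that the premetric space $\IA$ is connected, and for this I would appeal to Lemma~\ref{l4.4} by showing that $\pr:\IA\to[0,1]$ is a monotone quotient surjection. Continuity of $\pr$ is checked directly from the premetric: given $(i,y)\in\pr^{-1}((a,b))$, the ball $B_\IA((-1,y),y-a)$ (if $i=-1$) and $B_\IA((+1,y),b-y)$ (if $i=+1$) map into $(a,b)$. The quotient property follows because if $\pr^{-1}(V)$ is open then a ball around $(-1,x)$ inside $\pr^{-1}(V)$ gives $(x-r,x]\subset V$, and a ball around $(+1,x)$ gives $[x,x+r)\subset V$, so $V$ is open around each of its points. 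Each fiber $\pr^{-1}(x)=\{(-1,x),(+1,x)\}$ is connected, because $d((-1,x),(+1,x))=0=d((+1,x),(-1,x))$ under the first two clauses of the premetric (taking $x=y$), and hence any open set of $\IA$ containing one of the two points contains the other; the subspace topology on the fiber is indiscrete. By Lemma~\ref{l4.4}, connectedness of $[0,1]$ forces connectedness of $\IA$, and hence of $\Cob\,\IA$.

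Surjectivity of $f$ is immediate from the surjectivity of $\pi_\IA$ (Proposition~\ref{p7.1}(2)) and of $\pr$, and continuity follows from Theorem~\ref{t7.3}(1) combined with the continuity of $\pr$ established above.

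The main step is local extremality, which splits into two cases. If $a\in\Cob\,\IA\setminus\IA$, then by Proposition~\ref{p7.1}(3) the map $\pi_\IA$ is locally constant at $a$, so $f$ is locally constant on a neighborhood of $a$ and $a$ is a local extremum trivially. If $a=(i,x)\in\IA\subset\Cob\,\IA$, then for $r\in(0,1]$ Proposition~\ref{p7.2}(2) gives $\pi_\IA(B_{\Cob\,\IA}(a,r))=B_\IA(a,r)$. When $i=-1$, the definition of the premetric shows $B_\IA((-1,x),r)=\{(j,y):y\in(x-r,x]\}$, so $f$ maps $B_{\Cob\,\IA}(a,r)$ into $(x-r,x]$ and $a$ is a local maximum of $f$ with value $x$; symmetrically, when $i=+1$ the ball is $\{(j,y):y\in[x,x+r)\}$ and $a$ becomes a local minimum. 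The only genuinely delicate point is the connectedness of $\IA$, because the premetric is strikingly asymmetric and not basic, so one must carefully exhibit the monotone quotient structure of $\pr$; once this is done, all remaining assertions are routine consequences of the machinery of Sections~\ref{cobweb}--\ref{compression}.
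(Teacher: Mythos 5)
Your proof is correct, and except for the connectedness of $\IA$ it coincides with the paper's argument: completeness via closedness of $\Cob\,\IA$ in the complete graph $\Gamma\,\IA$, local constancy at points of $\Cob\,\IA\setminus\IA$ via Proposition~\ref{p7.1}(3), and the ball identity $\pi_\IA(B_{\Cob\,\IA}(a,r))=B_\IA(a,r)$ from Proposition~\ref{p7.2}(2), giving $f(B_{\Cob\,\IA}(a,r))\subset(x-r,x]$ at $(-1,x)$ and $\subset[x,x+r)$ at $(+1,x)$. Where you genuinely diverge is the connectedness of the premetric space $\IA$: the paper settles it by observing that the premetric topology is the smallest topology making $\pr:\IA\to[0,1]$ continuous, so every open set of $\IA$ is a full $\pr$-preimage and a clopen partition of $\IA$ would induce one of $[0,1]$; you instead verify directly that $\pr$ is a monotone quotient surjection --- continuity and the quotient property from the explicit balls $B_\IA((-1,x),r)=\pr^{-1}((x-r,x])$ and $B_\IA((+1,x),r)=\pr^{-1}([x,x+r))$, and monotonicity from the fact that the two points of each fiber are at premetric distance $0$ from one another, so the fiber is indiscrete --- and then invoke Lemma~\ref{l4.4}. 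Both routes rest on the same computation of balls in $\IA$; the paper's observation is stronger (it identifies the topology of $\IA$ exactly, making connectedness a one-line consequence), while yours is more self-contained in that it only uses machinery already proved in the paper and spells out the verification the paper dismisses as ``easy to check''. A small point in your favor: you keep the radius $r\in(0,1]$ explicit in the extremality step, whereas the paper fixes $U_a=B_{\Cob\,\IA}(a,1)$ and then writes $r$ without introducing it.
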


\begin{proof} It is easy to check that the premetric $d$ on $\IA$ generates the smallest topology turning the projection $\pr:\IA\to[0,1]$ into a continuous map. 
Endowed with this topology the space $\IA$ is connected. Applying Theorem~\ref{t7.3}(3) we conclude that $\Cob\,\IA$ is a connected complete metric space. 

We claim that the map $\pr\circ \pi_\IA:\Cob\,\IA\to[0,1]$ is locally extremal. Take any point $a\in \Cob\,\IA$. If $a\notin\IA$, then $f$ is locally constant at $a$ by Proposition~\ref{p7.1}(3). If $a\in\IA$, then consider the open unit ball $U_a=B_{\Cob\IA}(a,1)$ and apply Proposition~\ref{p7.2}(2) to conclude that
$$f(U_a)=\pr(\pi_\IA(B_{\Cob\IA}(a,1))=\pr(B_\IA(a,1)).$$
Let $a=(i,x)$ where $i\in\{-1,+1\}$ and $x\in[0,1]$. If $i=-1$, then $f(U_a)=\pr(B_\IA(a,r))\subset (x-r,x]$ and thus $a$ is a point of local maximum of $f$.
If $i=+1$, then $f(U_a)=\pr(B_\IA(a,r))\subset[x,x+r)$ and hence $a$ if a point of local minimum of $f$.
\end{proof}

\section{Some Open Problems}

By Corollary~\ref{c7.5}, two metric spaces $X,Y$ are homeomorphic provided their cobwebs $\Cob X$, $\Cob Y$ are homeomorphic.

\begin{problem} Are metric spaces $X,Y$ homeomorphic if so are the spaces $\Cob^\w X$ and $\Cob^\w Y$? In particular, are the spaces $\Cob^\w (\IR^n)$ and $\Cob^\w (\IR^m)$ homeomorphic for some $n\ne m$?
\end{problem}

In \cite{EPol} E.Pol constucted for every $n\in\IN$ a metric space of covering dimension $n$, having no separable subspaces of positive dimension.

\begin{problem} Let $n\in\IN$. Is there an economical complete metric space $X_n$ of covering dimension $\dim X_n=n$?
\end{problem}

A natural candidate for such a space would be $(\Cob^\w X)^n$ for a connected metric space $X$.

\begin{problem} Is $\dim (\Cob^\w X)^n=n$ for every connected metric space $X$ and every $n\in\IN$?
\end{problem}

In \cite{MW}, Morayne and W\'ojcik constructed a nonseparably connected metric group,
which is an example of a (topologically) homogeneous nonseparably connected metric space.

A metric space $X$ is called ({\em topologically}) {\em homogeneous} if for any two points $x,y\in X$ there is a bijective isometry (a homeomorphism) $f:X\to X$ with $f(x)=y$.

\begin{problem}
Can a nonseparably connected complete metric space be (topologically) homogeneous?
In particular, is there an economical connected complete metric group?
\end{problem}

\begin{problem} Is the space $\Cob^\w X$ (topologically) homogeneous for some metric space $X$ that contains more than one point?
\end{problem}

\begin{problem}
Can a nonseparably connected metric space be locally connected?
\end{problem}

\section{Acknowledgments}

We would like to thank Pawe{\l} Krupski for his topological seminar
at which we had a chance to present our work and improve it
greatly along the lines suggested by Krzysztof \  Omiljanowski. The first author expresses his thanks to Ryszard Engelking who turned his attention to Sierpi\'nski's paper \cite{Ser}.

This paper was developed over a long period of time gaining in generality and complexity of the mechanisms that we first discovered in considerably simpler settings. Therefore, an earlier draft \cite{earlier}, which is less advanced but more compact, may still be of interest to those who want to see the shortest solution of the original problem of constructing a nonseparably connected complete metric space, without studying all the mechanisms in full generality.

\newpage

\end{document}